\setheadfoot{\onelineskip}{2\onelineskip} 
  \setlist{nosep}
  \tikzset{
biml/.tip={Glyph[glyph math command=triangleleft, glyph length=.95ex]},
bimr/.tip={Glyph[glyph math command=triangleright, glyph length=.95ex]},
}
\tikzset{
	tick/.style={postaction={
  	decorate,
    decoration={markings, mark=at position 0.5 with
    	{\draw[-] (0,.4ex) -- (0,-.4ex);}}}
  }
} 
\tikzset{
	ttick/.style={postaction={
  	decorate,
    decoration={markings, mark=at position 0.5 with
    	{
			\draw[-] (-.15ex,.4ex) -- (-.15ex,-.4ex);
			\draw[-] (0.15ex,.4ex) -- (0.15ex,-.4ex);
			}
		}}
  }
}
\colorlet{wdCtrl}{blue!70}
\newlength\wdunit      
\newlength\halfunit    
\newlength\wdroofgap   
\newlength\wdtubewidth \setlength\wdtubewidth{18pt}   
\newdimen\wiresep
\newcommand{\innerScale}{0.5}   
\newlength\innerOffset
\tikzset{
  wdunit/.code={%
    \setlength\wdunit{#1}%
    \setlength\halfunit{\dimexpr\wdunit/2\relax}%
    \setlength\wdroofgap{2\wdunit}%
  },
  halfunit/.code={%
    \setlength\halfunit{#1}%
  },
}
\tikzset{
  bypass tube width/.store in=\bypasstubewidth,
  bypass tube width=2\halfunit,      
  bypass tube color/.initial=gray,
  bypass tube opacity/.initial=.50,
}
\tikzset{
  bypass wire/.style={
    preaction={
      draw=\pgfkeysvalueof{/tikz/bypass tube color},
      line width=\bypasstubewidth,
      opacity=\pgfkeysvalueof{/tikz/bypass tube opacity},
      line join=round
    }
  }
}
\tikzset{
  WD/.style        ={semithick,font=\small},
  port dot/.style  ={circle,draw,fill=white,inner sep=0pt,minimum size=3pt},
  tube/.style      ={draw=wdCtrl!60,fill=wdCtrl,fill opacity=.3,line join=round},
  data wire/.style ={-{Stealth[length=3pt]},semithick,line cap=round,line join=round},
  loop stroke/.style={draw=wdCtrl,line width=\wdtubewidth,opacity=.3,
                      line cap=round,line join=round},
}
\newcommand{\WDsumcsv}[2]{%
  #2=0 %
  \edef\WD@tmp{#1}%
  \@for\WD@t:=\WD@tmp\do{\advance#2 by \WD@t\relax}%
}
\newcommand{\WDcountcsv}[2]{%
  #2=0 %
  \edef\WD@tmp{#1}%
  \@for\WD@t:=\WD@tmp\do{\advance#2 by 1\relax}%
}
\newcommand{\wdAbsY}[2]{\dimen0=#1\wdunit\edef#2{\the\dimen0}}
\newcommand{\wdboxUnits}[6][]{%
  \def\BoxName{#2}\def\BoxLabel{#3}%
  \edef\WD@Llist{#4}\edef\WD@Rlist{#5}%
  \newcount\ForceUnits \ForceUnits=#6\relax

  \WDsumcsv{\WD@Llist}{\WDLslots}%
  \WDcountcsv{\WD@Llist}{\WDLk}%
  \WDLunits=\WDLslots \advance\WDLunits by \WDLk \advance\WDLunits by 1
  \WDsumcsv{\WD@Rlist}{\WDRslots}%
  \WDcountcsv{\WD@Rlist}{\WDRk}%
  \WDRunits=\WDRslots \advance\WDRunits by \WDRk \advance\WDRunits by 1

  \ifnum\WDLk=0 \PackageError{control-slots}{Left CSV empty}{Provide ≥1 entry}\fi
  \ifnum\WDRk=0 \PackageError{control-slots}{Right CSV empty}{Provide ≥1 entry}\fi

  \WDUnits=\WDLunits
  \ifnum\WDRunits>\WDUnits \WDUnits=\WDRunits\fi
  \ifnum\ForceUnits>\WDUnits \WDUnits=\ForceUnits\fi
  \advance\WDUnits by \wdpadunits \advance\WDUnits by \wdpadunits
  \ifnum\WDUnits<1 \WDUnits=1\fi

  \dimen0=\WDUnits\wdunit
  \node[draw,rounded corners=2pt,minimum width=2.6cm,
        minimum height=\the\dimen0,#1] (\BoxName) {\BoxLabel};

  \coordinate (\BoxName-nw) at (\BoxName.north west);
  \coordinate (\BoxName-ne) at (\BoxName.north east);

  \dimen2=\WDUnits\wdunit   \advance\dimen2 by -\WDLslots\wdunit
  \dimen4=\dimen2           \divide\dimen4 by \numexpr\WDLk+1\relax
  \dimen6=\dimen4           \WDi=0
  \def\WD@loopL##1{%
    \advance\WDi by 1
    \edef\YTop{\the\dimen6}\dimen8=##1\wdunit
    \dimen2=\dimen6         \advance\dimen2 by \dimen8
    \edef\YBot{\the\dimen2}%
    \coordinate (\BoxName-L-reg\the\WDi-top)
      at ([yshift=-\YTop]\BoxName-nw);
    \coordinate (\BoxName-L-reg\the\WDi-bot)
      at ([yshift=-\YBot]\BoxName-nw);
    \coordinate (\BoxName-L-reg\the\WDi-mid)
      at ([yshift=(-\YTop-\YBot)/2]\BoxName-nw);
    \WDn=##1 \WDj=0
    \loop\ifnum\WDj<\WDn
      \advance\WDj by 1
      \pgfmathsetmacro{\t}{(2*(\the\WDj)-1)/(2*\the\WDn)}%
      \coordinate (\BoxName-L-\the\WDi-D\the\WDj)
        at ($(\BoxName-L-reg\the\WDi-top)!\t!(\BoxName-L-reg\the\WDi-bot)$);
      \node[port dot] at (\BoxName-L-\the\WDi-D\the\WDj) {};
    \repeat
    \advance\dimen6 by \dimen8 \advance\dimen6 by \dimen4%
  }
  \@for\WD@t:=\WD@Llist\do{\expandafter\WD@loopL\expandafter{\WD@t}}

  \dimen2=\WDUnits\wdunit   \advance\dimen2 by -\WDRslots\wdunit
  \dimen4=\dimen2           \divide\dimen4 by \numexpr\WDRk+1\relax
  \dimen6=\dimen4           \WDi=0
  \def\WD@loopR##1{%
    \advance\WDi by 1
    \edef\YTop{\the\dimen6}\dimen8=##1\wdunit
    \dimen2=\dimen6         \advance\dimen2 by \dimen8
    \edef\YBot{\the\dimen2}%
    \coordinate (\BoxName-R-reg\the\WDi-top)
      at ([yshift=-\YTop]\BoxName-ne);
    \coordinate (\BoxName-R-reg\the\WDi-bot)
      at ([yshift=-\YBot]\BoxName-ne);
    \coordinate (\BoxName-R-reg\the\WDi-mid)
      at ([yshift=(-\YTop-\YBot)/2]\BoxName-ne);
    \WDn=##1 \WDj=0
    \loop\ifnum\WDj<\WDn
      \advance\WDj by 1
      \pgfmathsetmacro{\t}{(2*(\the\WDj)-1)/(2*\the\WDn)}%
      \coordinate (\BoxName-R-\the\WDi-D\the\WDj)
        at ($(\BoxName-R-reg\the\WDi-top)!\t!(\BoxName-R-reg\the\WDi-bot)$);
      \node[port dot] at (\BoxName-R-\the\WDi-D\the\WDj) {};
    \repeat
    \advance\dimen6 by \dimen8 \advance\dimen6 by \dimen4%
  }
  \@for\WD@t:=\WD@Rlist\do{\expandafter\WD@loopR\expandafter{\WD@t}}
}
\newcommand{\tubeN}[7]{%
  \path[tube]
    ([yshift= .5\halfunit]#1-#2-reg#3-top)
      to[out=0,in=180]
    ([yshift= .5\halfunit]#4-#5-reg#6-top)
    -- ([yshift=-.5\halfunit]#4-#5-reg#6-bot)
      to[out=180,in=0]
    ([yshift=-.5\halfunit]#1-#2-reg#3-bot)
    -- cycle;

  \ifnum#7>0
    \foreach \k in {1,...,#7}{%
      \draw[data wire]
        (#1-#2-#3-D\k) to[out=0,in=180] (#4-#5-#6-D\k);
    }%
  \fi
}
\newcommand{\loopTubeSmooth}[9][above]{%
  \ifthenelse{\equal{#1}{above}}{%
    \coordinate (Helper) at ($(#9.north)+(0,-\wdroofgap/2)$);
  }{%
    \coordinate (Helper) at ($(#9.south)-(0,\wdroofgap/2)$);
  }%
  \coordinate (Rp) at ($(#2.east|-Helper)$);
  \coordinate (Lp) at ($(#5.west|-Helper)$);

  \coordinate (St) at (#2-#3-reg#4-top);
  \coordinate (Sb) at (#2-#3-reg#4-bot);
  \coordinate (Sc) at ($(St)!0.5!(Sb)$);
  \coordinate (Tt) at (#5-#6-reg#7-top);
  \coordinate (Tb) at (#5-#6-reg#7-bot);
  \coordinate (Tc) at ($(Tt)!0.5!(Tb)$);

  \pgfmathsetlengthmacro{\loopwidth}{(#8+1)*\halfunit}%
  \begin{pgfonlayer}{background}
    \draw[loop stroke,line cap=butt,line width=\loopwidth]
      (Sc) to[out=0,in=0] (Rp)
      -- (Lp)
      to[out=180,in=180] (Tc);
  \end{pgfonlayer}

  \ifnum#8>0
    \foreach \k in {1,...,#8}{%
      \coordinate (pS) at (#2-#3-#4-D\k);
      \coordinate (pT) at (#5-#6-#7-D\k);
      \pgfmathsetmacro{\ratio}{2*\k/(#8+1)-1}%
      \pgfmathsetlengthmacro{\offset}{\ratio*\wiresep/2}%
      \coordinate (Rp\k) at ($(Rp)+(0,\offset)$);
      \coordinate (Lp\k) at ($(Lp)+(0,\offset)$);
      \draw[data wire]
        (pS) to[out=0,in=0] (Rp\k)
        -- (Lp\k)
        to[out=180,in=180] (pT);
    }%
  \fi
}
\tikzset{
  oriented WD/.style={%
    every to/.style={out=0,in=180,draw},
    label/.style={
      font=\everymath\expandafter{\the\everymath\scriptstyle},
      inner sep=0pt,
      node distance=2pt and -2pt},
    semithick,
    node distance=1 and 1,
    decoration={markings, mark=at position \stringdecpos with \stringdec},
    ar/.style={postaction={decorate}},
    execute at begin picture={\tikzset{
      x=\bbx, y=\bby,
      every fit/.style={inner xsep=\bbx, inner ysep=\bby}}}
  },
  string decoration/.store in=\stringdec,
  string decoration={\arrow{stealth};},
  string decoration pos/.store in=\stringdecpos,
  string decoration pos=.7,
  bbx/.store in=\bbx,
  bbx = 1.5cm,
  bby/.store in=\bby,
  bby = 1.5ex,
  bb port sep/.store in=\bbportsep,
  bb port sep=1.5,
  bb port length/.store in=\bbportlen,
  bb port length=4pt,
  bb penetrate/.store in=\bbpenetrate,
  bb penetrate=0,
  bb min width/.store in=\bbminwidth,
  bb min width=1cm,
  bb rounded corners/.store in=\bbcorners,
  bb rounded corners=2pt,
  bb spider/.style={
    bb port sep=1, bb port length=10pt, bbx=.4cm, bb min width=.4cm, bby=.8ex},
  bb small/.style={
    bb port sep=1, bb port length=2.5pt, bbx=.4cm, bb min width=.4cm, bby=.7ex},
  bb medium/.style={
    bb port sep=1, bb port length=2.5pt, bbx=.4cm, bb min width=.4cm, bby=.9ex},
  bb/.code 2 args={%
    \pgfmathsetlengthmacro{\bbheight}{\bbportsep * (max(#1,#2)+1) * \bby}
    \pgfkeysalso{draw,minimum height=\bbheight,minimum
     width=\bbminwidth,outer sep=0pt,
       rounded corners=\bbcorners,thick,
       prefix after command={\pgfextra{\let\fixname\tikzlastnode}},
       append after command={\pgfextra{\draw
          \ifnum #1=0{} \else foreach \i in {1,...,#1} {
            ($(\fixname.north west)!{\i/(#1+1)}!(\fixname.south west)$) +(-\bbportlen,0) coordinate (\fixname_in\i) -- +(\bbpenetrate,0) coordinate (\fixname_in\i')}\fi 
            \ifnum #2=0{} \else foreach \i in {1,...,#2} {
            ($(\fixname.north east)!{\i/(#2+1)}!(\fixname.south east)$) +(-\bbpenetrate,0) coordinate (\fixname_out\i') -- +(\bbportlen,0) coordinate (\fixname_out\i)}\fi;
         }}}
  },
  bb name/.style={
    append after command={
      \pgfextra{\node[anchor=north] at (\fixname.north) {#1};}
    }
  }
}
\newcommand{\adj}[5][30pt]{
\begin{tikzcd}[ampersand replacement=\&, column sep=#1]
  #2\ar[r, shift left=5pt, "#3"]
  \ar[r, phantom, "\scriptstyle\Rightarrow"]\&
  #5\ar[l, shift left=5pt, "#4"]
\end{tikzcd}
}
\newcommand{\xtickar}[1]{\begin{tikzcd}[baseline=-0.5ex,cramped,sep=small,ampersand 
replacement=\&,>={Straight Barb}, arrows=->]{}\ar[r,tick, "{#1}"]\&{}\end{tikzcd}}
\theoremstyle{definition}
\newtheorem{definitionx}{Definition}[chapter]
\theoremstyle{plain}
\newtheorem{theorem}[definitionx]{Theorem}
\newtheorem{proposition}[definitionx]{Proposition}
\newtheorem{corollary}[definitionx]{Corollary}
\newtheorem{lemma}[definitionx]{Lemma}
\newtheorem{warning}[definitionx]{Warning}
\newtheorem*{theorem*}{Theorem}
\newtheorem*{proposition*}{Proposition}
\newtheorem*{corollary*}{Corollary}
\newtheorem*{lemma*}{Lemma}
\newtheorem*{warning*}{Warning}
\newenvironment{example}
  {\pushQED{\qed}\examplex}
  {\popQED\endexamplex}
 \newenvironment{remark}
  {\pushQED{\qed}\remarkx}
  {\popQED\endremarkx}
  \newenvironment{definition}
  {\pushQED{\qed}\definitionx}
  {\popQED\enddefinitionx}
\crefname{definitionx}{Definition}{Definitions}
\crefname{remarkx}{Remark}{Remarks}
\crefname{examplex}{Example}{Examples}
\DeclareSymbolFont{stmry}{U}{stmry}{m}{n}
\DeclareMathSymbol\fatsemi\mathop{stmry}{"23}
\DeclareFontFamily{U}{mathx}{\hyphenchar\font45}
\DeclareFontShape{U}{mathx}{m}{n}{
      <5> <6> <7> <8> <9> <10>
      <10.95> <12> <14.4> <17.28> <20.74> <24.88>
      mathx10
      }{}
\DeclareSymbolFont{mathx}{U}{mathx}{m}{n}
\DeclareMathAccent{\widecheck}{0}{mathx}{"71}
\NewDocumentEnvironment{sequation}{O{\fontsize{15pt}{15pt}\selectfont
}b}
 {
  \yufip_sequation:nnn {equation}{#1}{#2}
 }{}
\NewDocumentEnvironment{sequation*}{O{\fontsize{16pt}{16pt}\selectfont
}b}
 {
  \yufip_sequation:nnn {equation*}{#1}{#2}
 }{}
\renewcommand{\ss}{\subseteq}
\DeclareMathOperator{\Hom}{Hom}
\DeclareMathOperator{\ob}{Ob}
\DeclareMathOperator{\Tr}{Tr}
\DeclareMathOperator{\iter}{Iter}
\newcommand{\cat}[1]{\mathcal{#1}}
\newcommand{\Cat}[1]{\mathsf{#1}}
\newcommand{\id}{\mathrm{id}}
\newcommand{\then}{\mathbin{\fatsemi}}
\newcommand{\too}{\longrightarrow}
\newcommand{\parto}{\rightharpoonup}
\newcommand{\To}[2][]{\xrightarrow[#1]{\tn{$#2$}}}
\newcommand{\Too}[1]{\xrightarrow{\;\;#1\;\;}}
\newcommand{\from}{\leftarrow}
\newcommand{\inj}{\rightarrowtail}
\newcommand{\jni}{\leftarrowtail}
\newcommand{\imp}{\Rightarrow}
\newcommand{\tickar}{\xtickar{}}
\newcommand{\card}{\,^{\#}}
\newcommand{\Exp}[1]{\Cat{Exp}_{#1}}
\newcommand{\op}{^\tn{op}}
\newcommand{\co}{^\tn{co}}
\newcommand{\dual}{^\ast}
\newcommand{\tn}[1]{\textnormal{#1}}
\newcommand{\nn}{\mathbb{N}}
\newcommand{\zz}{\mathbb{Z}}
\newcommand{\finset}{\Cat{FinSet}}
\newcommand{\finpoly}{\Cat{FinPoly}}
\newcommand{\smset}{\Cat{Set}}
\newcommand{\fdvect}{\Cat{FDVect}}
\newcommand{\smcat}{\Cat{Cat}}
\newcommand{\para}{\Cat{Para}}
\newcommand{\set}{\tn{-}\Cat{Set}}
\newcommand{\alg}{\tn{-}\Cat{Alg}}
\newcommand{\cob}{1\tn{-}\Cat{Cob}}
\newcommand{\yon}{{\mathcal{y}}}
\newcommand{\poly}{\Cat{Poly}}
\newcommand{\polystar}{\Cat{Poly}_\star}
\newcommand{\smsetstar}{\smset_\star}
\newcommand{\IInt}{\mathbb{I}\Cat{nt}}
\newcommand{\Int}{\Cat{Int}}
\newcommand{\biglens}[2]{
     \begin{bmatrix}{\vphantom{f_f^f}#2} \\ {\vphantom{f_f^f}#1} \end{bmatrix}
}
\newcommand{\littlelens}[2]{
     \begin{bsmallmatrix}{\vphantom{f}#2} \\ {\vphantom{f}#1} \end{bsmallmatrix}
}
\newcommand{\lens}[2]{
  \relax\if@display
     \biglens{#1}{#2}
  \else
     \littlelens{#1}{#2}
  \fi
}
\newcommand{\hh}[2][]{#1 \tn{#2} #1}
\newcommand{\qqand}{\hh[\qquad]{and}}
\newcommand{\qimplies}{\hh[\quad]{$\implies$}}
\newcommand{\qqimplies}{\hh[\qquad]{$\implies$}}
\newcommand{\qqiff}{\hh[\qquad]{iff}}
\newcommand{\qqmeans}{\hh[\qquad]{means}}
\newcommand{\qqie}{\hh[\qquad]{i.e.,}}
\newcommand{\thanksAFOSR}[1]{This material is based upon work supported by the Air Force Office of Scientific Research under award number #1}
\begin{document}

\title{\huge The compact double category $\IInt(\poly_\star)$\\models control flow and data transformations}
\author{}\date{\vspace{-2cm}}

\maketitle

\begin{center}
\begin{tabular}{ccc}
\large \textbf{Grigory Kondyrev}&~\hspace{1.3cm}~&\large \textbf{David I.\ Spivak}\\[1.5mm]
\small\itshape Noeon Research&&\small\itshape Topos Institute \& Conexus AI
\end{tabular}
\end{center}
\vspace{.5cm}

\begin{abstract}
Hasegawa showed that control flow in programming languages---while loops and if-then-else statements---can be modeled using traced cocartesian categories, such as the category $\smsetstar$ of pointed sets. In this paper we define an operad $\cat{W}$ of wiring diagrams that provides syntax for categories whose control flow moreover includes data transformations, including deleting, duplicating, permuting, and applying pre-specified functions to variables. In the most basic version, the operad underlies $\Int(\polystar)$, where $\Int(\cat{T})$ denotes the free compact category on a traced category $\cat{T}$, as defined by Joyal, Street, and Verity; to do so, we show that $\polystar$, as well as any multivariate version of it, is traced. We show moreover that whenever $\cat{T}$ is uniform---a condition also defined by Hasegawa and satisfied by $\polystar$---the resulting $\Int$-construction extends to a double category $\IInt(\cat{T})$, which is compact in the sense of Patterson. Finally, we define a universal property of the double category $\IInt(\polystar)$ and $\IInt(\smsetstar)$ by which one can track trajectories as they move through the control flow associated to a wiring diagram.
\end{abstract}


\chapter{Introduction}\label[section]{chap.intro}

In this paper we give a wiring diagram syntax for modeling control flow and data transformations, as found throughout computer programming. The wiring diagram syntax looks like this:
\begin{equation}\label[equation]{eqn.WD_poly}
\Phi\coloneqq\begin{tikzpicture}[WD, baseline=(BoxA)]
  \begin{pgfonlayer}{background}
    \wdboxUnits[minimum width=9cm]{Outer2}{}{1,1}{0,3}{9}
  \end{pgfonlayer}

  \begin{scope}[shift={(Outer2.center)},xshift=-\innerOffset,
                yshift=-.4cm, transform shape,scale=\innerScale]
    \wdboxUnits[minimum width=4cm]{BoxA}{\Large $A$}{2}{1,3}{0}
  \end{scope}

  \begin{scope}[shift={(Outer2.center)},xshift=\innerOffset,
                yshift=.9cm,transform shape,scale=\innerScale]
    \wdboxUnits[minimum width=4cm]{BoxB}{\Large $B$}{1,1}{1,2}{0}
  \end{scope}


  \tubeN{Outer2}{L}{2}{BoxA}{L}{1}{0}
  \draw[data wire] (Outer2-L-2-D1)
    to[out=0,in=180] (BoxA-L-1-D1);
  \draw[data wire] (Outer2-L-2-D1)
    to[out=0,in=180] (BoxA-L-1-D2);

	\tubeN{Outer2}{L}{1}{BoxB}{L}{1}{1}

  \tubeN{BoxA}{R}{2}{Outer2}{R}{2}{2}
  \draw[data wire] (BoxA-R-2-D2)
    to[out=0,in=180] (Outer2-R-2-D3);

  \tubeN{BoxA}{R}{1}{BoxB}{L}{2}{1}

	\tubeN{BoxB}{R}{1}{Outer2}{R}{1}{0}

  \loopTubeSmooth[below]{BoxB}{R}{2}{BoxA}{L}{1}{2}{BoxA}
\end{tikzpicture}
\end{equation}
Here we see two inner boxes, $A$ and $B$, and one outer box. A box is formally a pair, e.g.\ $A=(A^-,A^+)$, consisting of a left side and a right side---representing input and output respectively---each with some number of blue regions that themselves contain some number of ports. These boxes can be represented by pairs of polynomial functors, as we will explain in subsequent sections; for example $A^-\coloneqq\yon^2$ and $A^+\coloneqq\yon+\yon^3$. Connecting these boxes are blue tubes that carry the control flow---i.e.\ case logic, sequential computation, and while loops---and within them thin black wires that carry the data as it is passed around the diagram. The syntax is \emph{operadic} in the sense that if $A$ or $B$ contains a still smaller diagram of the same sort, we could nest it inside $\Phi$ to get a more detailed diagram; see \eqref{eqn.nesting_pic} for a depiction.

This syntax is modeled categorically by a compact closed category denoted $\Int(\polystar)$, where $\polystar$ is the traced monoidal category $\polystar\coloneqq 1/\poly$ of pointed polynomial functors, and $\Int(\cat{T})$ denotes Joyal-Street-Verity's free construction \cite{Joyal.Street.Verity:1996a} of a compact closed category from a traced monoidal category $\cat{T}$. For example, the diagram in \cref{eqn.WD_poly} is a map in $\Int(\polystar)$ of the form
\[
\Phi\colon\big((\yon^2,\yon+\yon^3)+(2\yon,\yon+\yon^2)\big)\tickar(2\yon,1+\yon^3).
\]

In \cite{hasegawa1997models}, Hasegawa explained how cocartesian traced monoidal categories can be understood as handling control flow.%
\footnote{According to the B\"ohm-Jacopini theorem \cite{bohm1966flow}, handling control flow is sufficient for computing any computable function.}
A key example is $\smsetstar$, the category of sets and partial functions: given a partial function $f\colon A+U\parto B+U$, one obtains a partial function $\Tr^U_{A,B}(f)\colon A\parto B$ by running a ``while loop'': repeatedly running $f$ and plugging any $U$-outputs back in as inputs to $f$ until either we terminate with a $B$ or fail to do so. Later in \cite{hasegawa2004uniformity}, he explained a naturality condition called \emph{uniformity}, which exists on certain traced monoidal categories, including $\smsetstar$.  

It turns out that uniformity is a very powerful principle for traced categories. If $\cat{U}$ is uniform traced monoidal, then so is the category $\Cat{Fun}(\cat{C},\cat{U})$ for any category $\cat{C}$: both the monoidal structure and the trace can be given pointwise. This fact will be used to show that $\polystar$, as well as a multivariate version of it, is traced, as we stated earlier without justification. Moreover, the $\Int$ construction described above actually fits into a double category structure, $\IInt(\cat{U})$. We will discuss background on traced monoidal categories, the one-dimensional $\Int$ construction, uniformity, and the traced structure on $\polystar$ in \cref{chap.background}.

Before moving on, we briefly touch on what we mean by \emph{data transformations} in the title of this paper. At the most primitive level, this means deleting, duplicating, and permuting variables, e.g.\ the map
\begin{equation}\label[equation]{eqn.ex_data_transform}
((x_1,x_2,x_3)\mapsto (x_3,x_2,x_3))\colon X\times X\times X\to X\times X\times X,
\end{equation}
which is natural in $X$. In wiring diagrams, this is done by terminating, splitting, or swapping data wires within a control region. One can also add typing to the variables, i.e.\ have a set $L$ and a type $X_\ell$ for each $\ell:L$, and consider natural maps of the form $\prod_{a:A}X_{\ell(a)}\to\prod_{b:B}X_{\ell(b)}$. Even more generally, one could consider the oplax slice over some category $\cat{L}$:%
\footnote{In \eqref{eqn.ex_data_transform} these were $\cat{L}=1$, $A=B=3$, $F(`1`)=`3`, F(`2`)=`2`, F(`3`)=`3`$, and $\varphi=\id$.}
\begin{equation}\label[equation]{eqn.oplax_slice}
\begin{tikzcd}
	\cat{A}\ar[rd, "\ell_A"']&\ar[d, phantom, pos=.4, "\xRightarrow{\varphi}"]&\cat{B}\ar[dl, "\ell_B"]\ar[ll, "F"']\\&
	\cat{L}
\end{tikzcd}
\end{equation}
Note that a diagram such as \eqref{eqn.oplax_slice}, together with a functor $X\colon\cat{L}\to\smset$, induces a function $\lim_{a\in\cat{A}}X_{\ell_A(a)}\to\lim_{b\in\cat{B}}X_{\ell_B(b)}$. We refer to these maps as \emph{data transformations} because they are exactly the natural transformations between conjunctive database queries, and we note that they allow applying pre-specified functions ($X$ applied to maps in $\cat{C}$) to variables. One may always rewrite a multivariate polynomial such as $x_1x_2x_2+x_3x_3+1$ instead using the syntax $\yon^{\{1,2,2\}}+\yon^{\{3,3\}}+\yon^{\{\}}$. The case of nondiscrete $\cat{L}$ allows one to write diagrams in the exponents, e.g.\
\begin{equation}\label[equation]{eqn.spannypoly}
	\yon^{2\to 1\from 2}+\yon^{\{3,3\}}+\yon^{\{\}},
\end{equation}
if there is a map $2\to 1$ in $\cat{L}$. In this case, data transformations would include maps such as $\yon^{2\to 1\from 2}\to\yon^{\{2, 2\}}$. Whenever we discuss $\polystar$ below, we implicitly mean to include any of the above sorts of data transformations.

\cref{chap.int_poly_wds} is about using the operad $\cat{W}$ underlying $\Int(\polystar)$ as a wiring diagram syntax. After some preliminaries on wiring diagrams, we will discuss the wiring diagram syntax provided by $\cat{W}$ and hence justify the above discussion about \eqref{eqn.WD_poly}. We then show that algebras $F\colon\cat{W}\to\smset$ correspond to specifying what is allowed to ``fill'' each box in a wiring diagram. We will show that such functors $F$ give rise to traced monoidal categories in their own right. Finally we discuss what we call \emph{bypassing}, whereby certain data can be stored while the computation within a given box is running. This is achieved mathematically using a $\para$-construction. In \cref{ex.factorial} we consider the example of the factorial function as a case where bypassing is useful. 

Finally in \cref{chap.functoriality_and_int} we describe the double-categorical $\IInt$ construction on uniform traced categories, and show that the result is compact closed in the sense of Patterson \cite{patterson2024toward}. We will also give a simple-minded factorization system on the tight category of $\IInt(\cat{U})$. This in turn will allow us to consider trajectories---the passing of control as it moves through a wiring diagram---using a universal property called \emph{segmentation}. We will give a conditions on $\cat{U}$ that guarantee $\IInt(\cat{U})$ is segmented, and show that both $\smsetstar$ and $\polystar$ satisfy those conditions.

\paragraph{Basic background and notation.}

We assume the reader is familiar with categories, coproducts $(0,+,\sum)$ and products $(1,\times,\prod)$, functors, natural transformations, monads and their Kleisli categories, symmetric monoidal categories (SMCs), lax and strong monoidal functors, string diagrams, and (colored) operads. See \cite{Leinster:2014a,macLane1998categories,selinger2010survey,fong2019seven} for background.

We denote the set of maps $A\to B$ in a category $\cat{C}$ by $\cat{C}(A,B)$ or $\Hom_\cat{C}(A,B)$. We allow ourselves to write either $g\circ f$ or $f\then g$ for the composite $\bullet\To{f}\bullet\To{g}\bullet$ and either $A$ or $\id_A$ for the identity on an object $A$. We denote the initial (resp.\ terminal) object of a category by $0$ (resp.\ $1$). We will denote symmetric monoidal categories by $(\cat{C},I,\otimes)$, where $\cat{C}$ is a category, $I\in\ob(\cat{C})$ is the unit object, and $(\otimes)\colon\cat{C}\times\cat{C}\to\cat{C}$ is the multiplication, leaving implicit the unitor and associator; the braiding isomorphism for $c,d$ is also usually implicit but we sometimes write it as $\sigma_{c,d}\colon c\otimes d\to d\otimes c$.

We denote the category of sets by $\smset$, and for any category $\cat{C}$, we write $\cat{C}\set\coloneqq\smset^\cat{C}=\Cat{Fun}(\cat{C},\smset)$ to denote the category of $\cat{C}$-sets, or copresheaves on $\cat{C}$. For any natural number $N:\nn$, we may write $N=\{`1`,\ldots,`N`\}$ to denote a standardized set with $N$-elements; e.g.\ $0$ denotes the empty set, $1$ denotes a terminal set, etc. For any $A:\smset$, we write $\yon^A\coloneqq\smset(A,-)\colon\smset\to\smset$ to denote the functor \emph{represented} by $A$; when $A=0$ we obtain $\yon^0=1$, the constant functor sending each $X\mapsto 1$. 

\paragraph{Background on polynomial functors.}

Since functors $\smset\to\smset$ are closed under coproducts, the notation $\sum_{i:I}\yon^{A_i}$ for a functor $\smset\to\smset$ has by now been defined; it is given by $X\mapsto\sum_{i:I}\smset(A_i,X)$. Such functors (sums of representables) are called \emph{polynomial functors} on $\smset$, even if $I$ or any $A_i$ is infinite; for example $\yon+1$ is the functor sending $X\mapsto X+1$, and $\yon^\nn+\zz$ is the functor sending $X\mapsto X^\nn+\zz$. We denote the category of polynomial functors and natural transformations by $\poly$; it is the free distributive category on one object. Coproducts and products in $\poly$ are given by the usual polynomial sum $(0,+,\sum)$ and product $(1,\times,\prod)$, e.g.\ $(\yon+1)\times(\yon+1)\cong\yon^2+2\yon+1$. There is a fully faithful distributive monoidal functor $\smset\to\poly$ sending $A\mapsto \sum_{a:A}\yon$, which we denote simply by $A:\poly$. We sometimes write $pq\coloneqq p\times q$, e.g.\ $A\yon^2=\sum_{a:A}\yon^2\cong(\sum_{a:A}\yon^0)\times\yon^2=A\times\yon^2$. 

The remaining paragraph of the background in \cref{chap.intro} will not be necessary for a first reading, unless the reader is specifically interested in more advanced data transformations. This paper should not require additional background on polynomial functors, but the interested reader can see \cite{niu2024polynomial} for more.

For any set $L:\smset$, consider the category $\smset[\yon_1,\ldots,\yon_L]$ whose objects are polynomials in $L$-many variables and whose morphisms are natural transformations between the induced functors $\smset^L\to\smset$. Even more generally, for any category $\cat{L}$, define the category $\smset[\cat{L}]$ to be the coproduct completion of $(\cat{L}\set)\op$. As mentioned in \eqref{eqn.spannypoly}, one can think of its objects $p$ as polynomials with $\cat{L}$-labeled diagrams in the exponents, so when $\cat{L}=L$ is discrete, this returns the multivariate polynomial case. There is a fully faithful coproduct-preserving functor
\[
\Exp{\cat{L}}\colon\smset[\cat{L}]\to\Cat{Fun}(\cat{L}\set,\smset)
\]
sending each $A:(\cat{L}\set)\op$ to the functor $\cat{L}\set(A,-)$ and completing under coproducts. In particular,
\begin{equation}\label[equation]{eqn.set_L_basics}
  \smset\cong\smset[0]
  \qqand
  \poly\cong\smset[\{\yon\}],
\end{equation} 
where $\{\yon\}$ denotes the category with one object. In both cases $\Exp{L}$ is as expected. Following \cite{spivak2025functorial}, we may refer to objects in $\smset[\cat{L}]$ as polynomials or multivariate polynomials, even though the latter is a special case.

\paragraph{Acknowledgments.}
\thanksAFOSR{FA9550-23-1-0376} as well as by Noeon Research. We also thank Kris Brown and Evan Patterson for useful conversations. We thank Andrei Krutikov for explaining the original graphical representation that inspired this work and for generously sharing insights.

\chapter{Background on traced categories}\label[section]{chap.background}

In \cref{sec.def_trace}, we recall the notion of traced monoidal categories; in \cref{sec.int} we recall the free compact closed category construction $\Int$, in \cref{sec.uniformity} we recall the notion of uniformity, in \cref{sec.new_from_known} we give some results by which new uniform traced categories can be obtained from known ones, and in \cref{sec.int_polystar} we apply some of these to show that $\polystar$ is uniform traced.

\section{Definitions and examples of traced categories}\label[section]{sec.def_trace}
Intuitively, traced symmetric monoidal categories are monoidal categories that support wiring diagrams that look like this:
\begin{equation}\label{eqn.WD_traced}
	\begin{tikzpicture}[oriented WD, bb port sep=1, bb port length=0, bb min width=.2, bby=.2cm]
 		\node[bb={2}{1}] (Trf) {$f_1$};
  	\node[bb={2}{2}, below right=-1 and .5 of Trf] (Trg) {$f_2$};
  	\node[bb={0}{0}, fit={($(Trf.north west)+(.5,1)$) ($(Trg.south east)+(-.5,-1.5)$)}] (Tr) {};
  	\node[coordinate] at (Tr.west|-Trf_in2) (Tr_in1) {};
  	\node[coordinate] at (Tr.west|-Trg_in2) (Tr_in2) {};
  	\node[coordinate] at (Tr.east|-Trf_out1) (Tr_out1) {};
  	\node[coordinate] at (Tr.east|-Trg_out2) (Tr_out2) {};
  	\draw[ar] (Tr_in1) -- node[above, font=\tiny] {$A$} (Trf_in2);
  	\draw[ar] (Trf_out1) to node[above, font=\tiny] {$C$} (Trg_in1);
  	\draw[ar] (Tr_in2) -- node[below, font=\tiny] {$B$} (Trg_in2);
  	\draw[ar] (Trg_out2) -- node[below, font=\tiny] {$E$} (Tr_out2);
  	\draw[ar] let \p1=(Trg.east), \p2=(Trf.north west), \n1=\bbportlen, \n2=\bby in
  		(Trg_out1) to[in=0] node[right, pos=.2, font=\tiny] {$D$} (\x1+\n1,\y2+\n2) -- (\x2-\n1,\y2+\n2) to[out=180] (Trf_in1);
		\node[above=0pt of Tr.south] {$g$};
	\end{tikzpicture}
\end{equation}
where $f_1\colon A\otimes D\to C$ and $f_2\colon B\otimes C\to E\otimes D$, and where $g\colon B\otimes A\to E$ is defined using a \emph{trace structure} $g=\Tr^{D}_{B\otimes A,E}((f_1\otimes B)\then f_2)$, as we now define. We recommend the reader consider the axioms of \cref{def.traced} in terms of their string diagram syntax, e.g.\ as shown in the \href{https://en.wikipedia.org/wiki/Traced_monoidal_category}{Wikipedia article on traced categories}.

\begin{definition}[Traced category]\label[definition]{def.traced}
Let $(\cat{T},I,\otimes)$ be a symmetric monoidal category. A \emph{trace} structure on it is a family of functions
\[
	\Tr^U_{A,B}\colon\cat{T}(A\otimes U,B\otimes U)\to\cat{T}(A,B)
\]
parameterized by objects $A,B,U\in\ob(\cat{T})$, satisfying the following axioms:
\begin{description}[labelindent=1.5em]
	\item[Naturality in $A,B$:] for any $f\colon A'\to A$ and $g\colon B\to B'$, the following diagram commutes
	\begin{equation}\label[equation]{eqn.tr_naturality}
	\begin{tikzcd}[column sep=35pt]
		\cat{T}(A\otimes U,B\otimes U)\ar[r, "\Tr^U_{A,B}"]\ar[d, "{\cat{T}(f\otimes U,g\otimes U)}"']&
			\cat{T}(A,B)\ar[d, "{\cat{T}(f,g)}"]\\
		\cat{T}(A'\otimes U,B'\otimes U)\ar[r, "\Tr^U_{A',B'}"']&
			\cat{T}(A',B')
	\end{tikzcd}
	\end{equation}
	\item[Dinaturality in $U$:] for any $h\colon U\to V$, the following diagram commutes:
	\begin{equation}\label[equation]{eqn.tr_dinaturality}
	\begin{tikzcd}[column sep=60pt]
		\cat{T}(A\otimes V,B\otimes U)\ar[r, "{\cat{T}(A\otimes V,B\otimes h)}"]\ar[d, "{\cat{T}(A\otimes h,B\otimes U)}"']&
			\cat{T}(A\otimes V,B\otimes V)\ar[d, "\Tr^V_{A,B}"]\\
		\cat{T}(A\otimes U,B\otimes U)\ar[r, "\Tr^U_{A,B}"']&
			\cat{T}(A,B)
	\end{tikzcd}
  \end{equation}
	\item[Monoidality in $U$:] the following diagrams commute:
	\begin{equation}\label[equation]{eqn.monoidal_in_U}
	\begin{tikzcd}[column sep=24pt]
	\cat{T}(A\otimes I,B\otimes I)\ar[d, "\cong"']\ar[r, "\Tr^I_{A,B}"]&
		\cat{T}(A,B)\ar[d, equal]\\
	\cat{T}(A,B)\ar[r, equal]&
		\cat{T}(A,B)
	\end{tikzcd}	
	\hspace{.3in}
	\begin{tikzcd}[column sep=25pt]
		\cat{T}(A\otimes U\otimes V,B\otimes U\otimes V)\ar[d,"\Tr^V_{A\otimes U,B\otimes U}"']\ar[r, "\Tr^{U\otimes V}_{A, B}"]&
			\cat{T}(A,B)\ar[d, equal]\\
		\cat{T}(A\otimes U,B\otimes U)\ar[r, "\Tr^U_{A,B}"']&
			\cat{T}(A,B)
	\end{tikzcd}
	\end{equation}
	\item[Superposing:] the following diagram commutes:
	\[
	\begin{tikzcd}
		\cat{T}(A',B')\times\cat{T}(A\otimes U,B\otimes U)\ar[r, "(\otimes)"]\ar[d, "{\cat{T}(A',B')\times\Tr^U_{A,B}}"']&
			\cat{T}(A'\otimes A\otimes U,B'\otimes B\otimes U)\ar[d, "\Tr^U_{A'\otimes A,B'\otimes B}"]\\
		\cat{T}(A',B')\times\cat{T}(A,B)\ar[r, "(\otimes)"']&
			\cat{T}(A'\otimes A,B'\otimes B)
	\end{tikzcd}
	\]
	\item[Yanking:] the equation $\Tr^U_{U,U}(\sigma_{U,U})=\id_U$ holds, where $\sigma_{U,U}$ is the braiding.
\end{description}

\medskip
We refer to an SMC equipped with a trace structure as a \emph{traced category}.
\end{definition}

\begin{remark}\label[remark]{rem.iter}
Let $(\cat{C},0,+)$ be a cocartesian monoidal category. Then a trace structure on it is equivalent to an \emph{iteration operator}
\[
\iter^A_B\colon\cat{C}(A,B+A)\to\cat{C}(A,B)
\]
satisfying certain properties \cite{hasegawa1997models}, such as the one depicted here:
\begin{equation}\label[equation]{eqn.iter}
\begin{tikzpicture}[oriented WD, bb small, bb port length=1mm]
	\node[bb={1}{2}] (f) {$f$};
	\node[circle, draw=black, fill=black, inner sep=.7pt, left=0mm] at (f_in1) (dot1) {};
	\draw (f_in1) -- (dot1) -- +(-3mm, 0);
	\draw (f_out1) -- +(3mm,0);
	\draw let \p1 = (f.south west), \p2 = (f.south east), \n1={\y1-\bby}, \n2=\bbportlen in
		(f_out2) to[out=0, in=0] (\x2+\n2,\n1) -- (\x1-\n2,\n1) to[out=180, in=260] (dot1);
	\node[bb={1}{2}, right=4 of f] (f2) {$f$};
	\node[bb={1}{2}, right=of f2_out2] (f3) {$f$};
	\node[circle, draw=black, fill=black, inner sep=.7pt, left=0mm] at ($(f2_out2)!.5!(f3_in1)$) (dot2) {};
	\node[circle, draw=black, fill=black, inner sep=.7pt, right=2mm of f3_out1] (dot3) {};
	\draw (f2_out2) -- (dot2) -- (f3_in1);
	\draw let \p1 = (f3.south west), \p2 = (f3.south east), \n1={\y1-\bby}, \n2=\bbportlen in
		(f3_out2) to[out=0, in=0] (\x2+\n2,\n1) -- (\x1-\n2,\n1) to[out=180, in=260] (dot2);
	\draw let \p1 = (f3.north west), \p2 = (f3.north east), \n1={\y1+\bby}, \n2=\bbportlen in
		(f2_out1) to (\x1-\n2,\n1) -- (\x2+\n2,\n1) to (dot3);
	\draw (f3_out1) -- (dot3) -- +(3mm, 0);
	\node at ($(f)!.5!(f2)$) {$=$};
\end{tikzpicture}
\end{equation}
Indeed, we can inter-define trace and iteration in terms of one another as follows:
\[
	\iter^A_B(f)\coloneqq\Tr^{A}_{A,B}(\nabla_A\then f)
	\qqand
	\Tr^U_{A,B}(g)\coloneqq (A+!_U)\then\iter^{A+U}_B(g\then(B+!_A+U))
\]
for $f\colon A\to B+A$ and $g\colon A+U\to B+U$, where $\nabla_A\colon A+A\to A$ is the fold and where $!_U\colon 0\to U$ and $!_A\colon 0\to A$ are the unique maps.
\end{remark}

Below are some examples of traced categories.
\begin{example}[Cancelative monoids are discrete traced categories]\label[example]{ex.cancellative}
	Let $(M,i,\cdot)$ denote a commutative monoid in $\smset$, and let $(\cat{M}, I,\odot)$ denote the corresponding discrete symmetric monoidal category. Recall that $M$ is said to be \emph{cancellative} if $a\cdot u=b\cdot u$ implies $a=b$, for all $a,b,u\in M$. For example, $(\nn,0,+)$ is cancellative because if $a+u=b+u$ then $a=b$. The monoid $(\{0,1\},1,\ast)$ is \emph{not} cancellative because $0\ast 0=1\ast 0$ but $0\neq 1$.
	
	The monoid $M$ is cancellative iff the monoidal category $\cat{M}$ is traced by the functions $\Tr^U_{A,A}(\id_{A\cdot U})=\id_A$.
\end{example}

\begin{example}\label[example]{ex.fdvect}
The category $(\fdvect_k, k, \otimes)$ of finite dimensional vector spaces over any field $k$ is traced. Here is a simple, though basis-dependent, way of thinking about the trace structure. Let $A,B,U$ be vector spaces with dimensions $a,b,u$ respectively. If each is equipped with a choice of basis, then an element $M:\fdvect_k(A\otimes U,B\otimes U)$ can be thought of as an $(a\times u)\times(b\times u)$ matrix, or equivalently an $a\times b$ block matrix of $u\times u$ blocks. The trace $\Tr^U_{A,B}(M):\fdvect_k(A,B)$ is the $a\times b$ matrix given by ``tracing"---summing up the diagonal elements---of each $u\times u$-block in $M$. 
\end{example}

Before getting to \cref{ex.setstar_traced}, the main example of interest, we recall the notion of lextensivity and briefly discuss some related notions we will use.

\begin{definition}\label[definition]{def.extensive}
A category $\cat{C}$ is \emph{lextensive} \cite{carboni1993introduction} if it has coproducts $(0,+)$, it has finite limits $(1,\times_c)$, and the following relationship holds. Suppose
\[
\begin{tikzcd}
	x_1\ar[r]\ar[d]&x\ar[d]&x_2\ar[d]\ar[l]\\
	y_1\ar[r]&y_1+y_2&y_2\ar[l]
\end{tikzcd}
\]
is a commutative diagram for which the bottom row is a coproduct diagram. Then both squares are pullbacks (i.e.\ $x_1\cong x\times_yy_1$ and $x_2\cong x\times_yy_2$) iff the top row is a coproduct diagram (i.e.\ $x_1+x_2\cong x$).

If $\cat{C}$ is lextensive, a \emph{decidably partial map} $c\parto d$ is a span $c\jni c'\to d$, defined up to isomorphism, where $c'\inj c$ is a coproduct inclusion. We say that $\cat{C}$ has \emph{all points isolated} if every map out of $1$ is a coproduct inclusion.
\end{definition}

In a lextensive category, all coproduct inclusions are monic, so decidably partial maps are in particular partial in the usual sense, and they compose by pullback in a lextensive category. In $\smsetstar$, every partial map is decidably partial since every monic map is a coproduct inclusion.

The following is straightforward, so we leave its proof to the reader.
\begin{lemma}\label[lemma]{lemma.lex_star}
Suppose that $\cat{C}$ is a lextensive category. The following categories are isomorphic and have the same collection of objects:
\begin{enumerate}
	\item the category of decidably partial maps in $\cat{C}$;
	\item the Kleisli category of the monad $c\mapsto c+1$ on $\cat{C}$.
\end{enumerate}
We denote this category-up-to-isomorphism by $\cat{C}_\star$.

If $\cat{C}$ has all points isolated then there is also a bijective-on-objects isomorphism $\cat{C}_\star\cong 1/\cat{C}$, where $1/\cat{C}$ is the coslice category of objects equipped with a map $1\to c$.
\end{lemma}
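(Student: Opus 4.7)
The plan is to construct the two isomorphisms separately: first between decidably partial maps and Kleisli morphisms for the monad $c \mapsto c + 1$, and then between the Kleisli category and the coslice $1/\cat{C}$, where only the second step requires the ``all points isolated'' hypothesis.

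For the equivalence of (1) and (2), I would exploit lextensivity in both directions. Given a Kleisli morphism $f\colon c \to d + 1$, lextensivity applied to the coproduct decomposition of the codomain yields a canonical decomposition $c \cong c' + c''$ for which $c' \to d$ and $c'' \to 1$; then $(c' \inj c, c' \to d)$ is a decidably partial map, well-defined up to unique iso. Conversely, a decidably partial map $c \jni c' \to d$ with $c \cong c' + c''$ produces the Kleisli morphism $[c' \to d \to d + 1,\; c'' \to 1 \to d+1]\colon c \to d+1$. These assignments are mutually inverse and respect identities. For composition, the Kleisli composite of $f\colon c \to d + 1$ and $g\colon d \to e+1$ factors through $c \to d+1 \to (e+1)+1 \to e+1$, and one checks that under the above correspondence this matches the span composite of decidably partial maps built by pullback. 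The key ingredient, provided by lextensivity, is that pullbacks of coproduct inclusions along arbitrary maps are again coproduct inclusions, so the pullback of the span stays decidably partial.

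For the second isomorphism, assume $\cat{C}$ has all points isolated. Define $\Phi\colon \cat{C}_\star \to 1/\cat{C}$ on objects by $c \mapsto (c + 1,\; \tn{inr}\colon 1 \to c+1)$ and on a Kleisli morphism $f\colon c \to d + 1$ by the copair $[f, \tn{inr}]\colon c + 1 \to d + 1$, which commutes with the basepoints. Functoriality is direct from the definition of Kleisli composition. The universal property of the coproduct $c + 1$ identifies maps $c+1 \to d+1$ with pairs of components, and fixing the basepoint component to equal $\tn{inr}$ recovers precisely the Kleisli hom; hence $\Phi$ is fully faithful. On objects, bijectivity uses both directions of the hypothesis: for any $(d, p)$ in $1/\cat{C}$, the point $p$ is a coproduct inclusion, so $d \cong c + 1$ with $p$ corresponding to $\tn{inr}$; conversely, an iso $(c+1, \tn{inr}) \iso (c'+1, \tn{inr})$ in $1/\cat{C}$ restricts by the uniqueness of coproduct complements (again via lextensivity) to an iso $c \iso c'$.

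The main obstacle is the verification that composition of decidably partial maps, defined by pullback, matches Kleisli composition after applying the monad multiplication $[\id,\tn{inr}]\colon (e+1)+1 \to e+1$. The core identity required is that in a lextensive category, pullback along a coproduct inclusion decomposes the source along the corresponding coproduct; once this is in hand, the compositional comparison becomes a bookkeeping exercise rather than a deep argument, and identities and associativity follow automatically.
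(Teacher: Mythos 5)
The paper leaves this proof to the reader, and your argument is the standard one: lextensivity gives the mutually inverse assignments between Kleisli maps $c\to d+1$ and decidable spans (pull back along the two summand inclusions of $d+1$ to split $c\cong c'+c''$, copair to go back), with pullback-stability of summand inclusions handling composition, and the isolated-points hypothesis gives essential surjectivity of $c\mapsto(c+1,\mathrm{inr})$. The only caveat is that your second step actually produces a fully faithful, essentially surjective functor---an equivalence---rather than a literal bijection on objects (an object $(d,p)$ of $1/\cat{C}$ is merely isomorphic to some $(c+1,\mathrm{inr})$), but this matches the level of precision of the statement itself, and your construction is otherwise complete and correct.
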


By \eqref{eqn.set_L_basics}, the following proposition in particular tells us that both $\smset$ and $\poly$ are lextensive with all points isolated.

\begin{proposition}\label[proposition]{prop.complemented_points}
For any category $\cat{L}$, the category $\smset[\cat{L}]$ is lextensive with all points isolated.
\end{proposition}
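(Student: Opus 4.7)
My plan is to work throughout with the description $\smset[\cat{L}]\cong\Cat{Fam}((\cat{L}\set)\op)$ as the free coproduct completion, so that an object is a formal sum $\sum_{i:I}\yon^{A_i}$ with $A_i:\cat{L}\set$, and a morphism $\sum_{i:I}\yon^{A_i}\to\sum_{j:J}\yon^{B_j}$ is a pair $(f\colon I\to J,\;(\phi_i\colon B_{f(i)}\to A_i)_{i:I})$ --- an indexing function together with copresheaf maps going in the reverse direction. With this presentation, both claims reduce to indexwise bookkeeping, using that $\cat{L}\set$ is (co)complete.

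First I would verify that $\smset[\cat{L}]$ has finite limits. The terminal object is $1=\yon^0$, where $0$ denotes the initial copresheaf. Binary products distribute over sums and on representables are given by $\yon^A\times\yon^B\cong\yon^{A+B}$, where $A+B$ is the coproduct in $\cat{L}\set$. Pullbacks are computed as
\[
\Bigl(\sum_{i:I}\yon^{A_i}\Bigr)\times_{\sum_{k:K}\yon^{C_k}}\Bigl(\sum_{j:J}\yon^{B_j}\Bigr)\;\cong\;\sum_{(i,j):I\times_K J}\yon^{A_i+_{C_{f(i)}}B_j},
\]
using a pullback of index sets in $\smset$ together with a pushout of exponents in $\cat{L}\set$; both exist since $\smset$ and $\cat{L}\set$ are cocomplete. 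For the extensivity biconditional, I would unfold a given map $x\to y_1+y_2$ into an index function $h\colon I_x\to I_{y_1}+I_{y_2}$. Because the inclusion $y_\alpha\inj y_1+y_2$ is the identity on exponents, the pullback formula above shows that $x\times_{y_1+y_2}y_\alpha$ is precisely the subfamily of $x$ indexed by $h^{-1}(I_{y_\alpha})$. Hence both squares are pullbacks iff the top row realizes the decomposition $I_x=h^{-1}(I_{y_1})\sqcup h^{-1}(I_{y_2})$, which is exactly the condition for it to be a coproduct diagram.

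For ``all points isolated'', I would observe that a morphism $1=\yon^0\to p=\sum_{i:I}\yon^{A_i}$ consists of a choice of index $i_0:I$ together with a copresheaf map $A_{i_0}\to 0$ in $\cat{L}\set$. Since $0$ is initial, such a map exists iff $A_{i_0}$ is itself initial, in which case $\yon^{A_{i_0}}\cong\yon^0=1$. Factoring $p\cong\yon^{A_{i_0}}+\sum_{i\neq i_0}\yon^{A_i}$ then exhibits the given map as a coproduct inclusion of the first summand. The main subtlety here is the variance convention: exponents are contravariant in copresheaf morphisms, so a ``point'' of $p$ requires a map \emph{out of} its exponent into the initial object, forcing that exponent to itself be initial --- dual to the usual Yoneda intuition. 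Once that variance is tracked, everything else is direct verification from the coproduct-completion description.
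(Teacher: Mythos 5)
Your proof is correct, but it takes a genuinely different route from the paper's. The paper transfers everything along the fully faithful embedding $\Exp{\cat{L}}\colon\smset[\cat{L}]\to\Cat{Fun}(\cat{L}\set,\smset)$: the functor category is lextensive because $\smset$ is, and $\Exp{\cat{L}}$ is asserted to create coproducts and finite limits, so $\smset[\cat{L}]$ inherits lextensivity; points isolated is then checked by the same Yoneda/strict-initial argument you give. You instead work intrinsically in the free coproduct completion $\Cat{Fam}((\cat{L}\set)\op)$ and exhibit explicit formulas for the terminal object, products, and pullbacks (pullback of index sets in $\smset$, pushout of exponents in $\cat{L}\set$), from which the extensivity biconditional falls out by inspecting preimages of index sets. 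Your version is more self-contained and effectively unpacks the "creates finite limits" claim that the paper leaves as an assertion, at the cost of more bookkeeping; the paper's version is shorter and reuses the embedding that it needs anyway in \cref{lemma.ff_strong_polystar_setsetstar}. Two small points: your extensivity step compresses the "top row is a coproduct $\Rightarrow$ both squares are pullbacks" direction (one should note that commutativity forces $I_{x_\alpha}\ss h^{-1}(I_{y_\alpha})$ and hence equality, since both pairs partition $I_x$); and in the points-isolated step, "a map $A_{i_0}\to 0$ exists iff $A_{i_0}$ is initial" needs that $0$ is a \emph{strict} initial object of $\cat{L}\set$ (which it is, pointwise in $\smset$), not merely initial --- the paper invokes strictness explicitly. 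Neither gap is substantive.
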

\begin{proof}
This is well-known for $\smset$ and lextensivity is inherited for any functor category into a lextensive category, e.g.\ $\Cat{Fun}(\cat{L}\set,\smset)$. Since the functor $\Exp{\cat{L}}\colon\smset[\cat{L}]\to\Cat{Fun}(\cat{L}\set,\smset)$ creates coproducts and finite limits, $\smset[\cat{L}]$ is lextensive. Finally, $\smset[\cat{L}]$ has all points isolated because any natural transformation $1\to p=\sum_{i:I}\cat{L}\set(A_i,-)$ factors through some coproduct inclusion $1=\cat{L}\set(0,-)\to\cat{L}\set(A_i,-)$, and by Yoneda's lemma, it can be identified with a map $A_i\to 0$. Since $0$ is a strict initial object, $A_i\cong 0$.
\end{proof}

\begin{lemma}\label[lemma]{lem.ofs_cstar}
If $\cat{C}$ is lextensive then $\cat{C}_\star$ has an orthogonal factorization system $(\cat{R},\cat{C})$, where $\cat{R}$ (``restriction'') is the opposite of the category of coproduct inclusions in $\cat{C}$. If $c\parto c'$ in $\cat{C}_\star$ happens to be in $\cat{R}$, then it has a section $c'\to c$ in $\cat{C}$.
\end{lemma}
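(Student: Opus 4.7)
The plan is to produce the factorization explicitly, verify orthogonality, and construct the section. A morphism $c\parto d$ in $\cat{C}_\star$ is represented up to iso by a span $c\jni c'\xrightarrow{\varphi}d$ with $c'\inj c$ a coproduct inclusion. I would factor it as the restriction $c\parto c'$ in $\cat{R}$ (given by the span $c\jni c'=c'$) followed by the total map $\varphi\colon c'\to d$ viewed in $\cat{C}_\star$. By the composition-by-pullback description of $\cat{C}_\star$ (\cref{lemma.lex_star}), these recombine to the original span since the pullback of $\varphi$ along $\id_{c'}$ is trivial. Uniqueness of the middle object up to iso follows because coproduct inclusions into $c$ are determined by the subobject they represent.

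For orthogonality, consider a commutative square
\[
\begin{tikzcd}
a\ar[r, tick, "g"]\ar[d, tick, "r"']&c\ar[d, "f"]\\
b\ar[r, tick, "h"']&d
\end{tikzcd}
\]
with $r\in\cat{R}$ corresponding to a coproduct inclusion $\iota\colon b\inj a$, with $f\in\cat{C}$, and with $g,h$ arbitrary decidably partial maps represented by spans $a\jni a'\xrightarrow{g'}c$ and $b\jni b'\xrightarrow{h'}d$. A direct pullback computation shows $h\circ r$ has left leg the composite $b'\inj b\inj a$ while $f\circ g$ has left leg $a'\inj a$; commutativity of the square forces these to agree as subobjects of $a$. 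Since both are coproduct inclusions, $a'\inj a$ factors through $\iota$ as a coproduct inclusion $a'\inj b$. The diagonal lift $k\colon b\parto c$ is then the span $b\jni a'\xrightarrow{g'}c$; commutativity of both triangles follows directly from this span description. Uniqueness is forced because any lift $k$ must have left leg $a'$ (from $k\circ r=g$) and right leg $g'$ (from the same equation).

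For the final claim, a morphism $c\parto c'$ in $\cat{R}$ is represented by the span $c\jni c'=c'$ coming from a coproduct inclusion $\iota\colon c'\inj c$. Viewing $\iota$ itself as a morphism in $\cat{C}_\star$ (the span $c'=c'\xrightarrow{\iota}c$) and composing with the restriction gives the pullback of $\iota$ along itself, which is $\id_{c'}$ because $\iota$ is monic (coproduct inclusions in lextensive categories are monic). Hence $\iota$ is a section of the restriction.

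I do not anticipate a serious obstacle; the only delicate point is bookkeeping spans under pullback composition. The workhorse throughout is that coproduct inclusions in a lextensive category are monic, stable under pullback, and closed under composition, so composites and pullbacks of coproduct inclusions remain coproduct inclusions.
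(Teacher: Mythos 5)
Your proof is correct and follows essentially the same route as the paper: factor a decidably partial map through its span decomposition $c\jni c'\to d$, and obtain the section from the fact that the pullback of a monic along itself is trivial. The paper's proof is merely terser, asserting the unique decomposition directly where you spell out the orthogonal lifting argument explicitly.
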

\begin{proof}
By definition every decidably partial map $c\parto d$ decomposes uniquely as $c\jni c'\to d$ where the first is a map $c\parto c'$ in $\cat{R}$ and the second is a map $c'\to d$ in $\cat{C}$. The coproduct inclusion $c'\inj c$ associated to any $r\colon c\parto c'$ in $\cat{R}$ is a section of $r$ in $\cat{C}$ because the pullback of a monic along itself is itself.
\end{proof}

\begin{example}[Traced structure on $\smsetstar$]\label[example]{ex.setstar_traced}
For any monad on a cocartesian category, the corresponding Kleisli category has a cocartesian monoidal structure, so $\smsetstar$ is cocartesian by \cref{lemma.lex_star}. This monoidal structure is traced (\cite{hasegawa1997models}), or equivalently by \cref{rem.iter}, it has an iteration operator: given $f\colon A\parto B+A$, our function $f'\coloneqq\iter^A_B(f)\colon A\parto B$ can be defined by the following procedure. Given $a:A$, send it to the result of applying $f$ to $a$ and then repeatedly taking any output that lands in $A$ and plugging it back into $f$. If this process terminates---lands in $B$---within finite time, then that is taken to be the result $f'(a)\in B$; otherwise the partial map $f'$ is undefined at $a$.

The above procedure can be defined more precisely as follows. Using the functor $p\coloneqq \yon+B$, the pointed function $f\colon A+1\to B+A+1$ can be considered as a coalgebra $A+1\to p(A+1)$ equipped with a map from the coalgebra $1\to p(1)$. The terminal $p$-coalgebra is $(\nn\times B)+1$, and the induced map $1\to(\nn\times B)+1$ sends $1\mapsto 1$. Thus the following composite, which we define $\iter^A_B(f)$ to be, is pointed and hence represents a map $A\to B$ in $\smsetstar$:
\[
	A+1\to(\nn\times B)+1\to B+1.
	\qedhere
\]
\end{example}

\section{The $\Int$ construction}\label[section]{sec.int}
Recall that a compact category (also known as a compact closed category), is a symmetric monoidal category $(\cat{C},I,\otimes)$ for which every object $X$ has a dual $X\dual$, in the sense that there are maps
\[
\eta_X\colon I\to X\otimes X\dual
\qqand
\epsilon_X\colon X\dual\otimes X\to I
\]
such that the composite $X\To{\eta_X\otimes X} X\otimes X\dual\otimes X\To{X\otimes\epsilon_X}X$ is equal to $\id_X$, and the composite $X\dual\To{X\dual\otimes\eta_X} X\dual\otimes X\otimes X\dual\To{\epsilon_X\otimes X\dual}X\dual$ is equal to $\id_{X\dual}$. 

\begin{example}
The category $\fdvect$ is compact. Indeed, the dual $V\dual$ of $V$ is the vector space of linear maps $V\to k$, the map $\epsilon_V$ is given by evaluation, and $\eta_V$ can be defined as the sum $\sum_{b:B}b\otimes b\dual$ over any given $V$-basis $B$ and its dual. 

The trace structure discussed in \cref{ex.fdvect} is the one that exists on any compact category, as we now explain.
\end{example}

Every compact category has a trace structure: given $f\colon A\otimes U\to B\otimes U$, define
\[
\begin{tikzcd}[column sep=40pt]
	A\ar[r, dashed, "\Tr^U_{A,B}(f)"]\ar[d, "A\otimes\eta_U"']&B\\
	A\otimes U\otimes U\dual\ar[r, "f\otimes U\dual"']&B\otimes U\otimes U\dual\ar[u,"B\otimes\epsilon_U"']
\end{tikzcd}
\]

In \cite{Joyal.Street.Verity:1996a}, Joyal, Street, and Verity defined traced categories and showed that the above \emph{underlying traced category} functor has a left adjoint, which they called the $\Int$-construction, sending any traced category to the free compact category on it.%
\footnote{The authors of \cite{Joyal.Street.Verity:1996a} claimed that $\Int\colon\Cat{TrCat}\to\Cat{CompCat}$ was furthermore 2-functorial in the natural transformations between traced functors, but this was later proved false in \cite{hasegawa2010note}, where it was shown that $\Int$ is only 2-functorial with respect to natural isomorphisms.}
\[
\adj{\Cat{TrCat}}{\Int}{\Cat{Und}}{\Cat{CompCat}}.
\]
For a traced category $(\cat{T},I,\otimes)$, the category $\Int(\cat{T})$ has the following objects, morphisms, monoidal structure, and duals:
\begin{equation}\label[equation]{eqn.int_construction}
\begin{aligned}
	\ob\Int(\cat{T})&\coloneqq\{(A^-,A^+)\in\ob(\cat{T})\times\ob(\cat{T})\},\\
	\Int(\cat{T})((A^-,A^+),(B^-,B^+))&\coloneqq\cat{T}(B^-\otimes A^+,B^+\otimes A^-),\\
	(A^-,A^+)\otimes(B^-,B^+)&\coloneqq(A^-\otimes B^-,A^+\otimes B^+),\\
	(A^-,A^+)\dual&\coloneqq(A^+,A^-).
\end{aligned}
\end{equation}
Its monoidal unit is $(I,I$). We denote maps in $\Int$ by $(A^-,A^+)\tickar(B^-,B^+)$. The composite of $(A^-,A^+)\xtickar{f}(B^-,B^+)\xtickar{g}(C^-,C^+)$ is defined by:%
\footnote{
Despite how it is written, the composite \eqref{eqn.int_composite} is not biased toward tracing out $B^-$. Indeed, by the axioms of traced categories, it can equivalently be written as tracing out $B^+$ as follows:
\[\Tr^{B^+}_{C^-\otimes A^+,C^+\otimes A^-}
\left(
	C^-\otimes B^+\otimes A^+\To{g\otimes A^+}
	C^+\otimes B^-\otimes A^+\To{C^+\otimes f}
	C^+\otimes B^-\otimes A^-
\right)
\]
or as tracing out $B^-+B^+$ as follows:
\[
\Tr^{B^+\otimes B^-}_{C^-\otimes A^+,C^+\otimes A^-}
\left(
	C^-\otimes B^+\otimes B^-\otimes A^+\To{g\otimes f}
	C^+\otimes B^-\otimes B^+\otimes A^-\To{C^+\otimes\sigma_{B^-,B^+}}
	C^+\otimes B^+\otimes B^-\otimes A^-
\right)
\]

In case it wasn't clear, given $f\colon A\otimes U\otimes A'\to B\otimes U\otimes B'$, we allow ourselves to write $\Tr^U_{A\otimes A',B\otimes B'}(f)$ to denote what technically should be $\Tr^U_{A\otimes A',B\otimes B'}((A\otimes\sigma_{A',U})\then f\then(A\otimes\sigma_{U,B'}))$, where the $\sigma$'s are braidings.
}
\begin{equation}\label[equation]{eqn.int_composite}
\Tr^{B^-}_{C^-\otimes A^+,C^+\otimes A^-}
\left(
	C^-\otimes B^-\otimes A^+\To{C^-\otimes f}
	C^-\otimes B^+\otimes A^-\To{g\otimes A^-}
	C^+\otimes B^-\otimes A^-
\right).
\end{equation}

\begin{example}[Integer construction]
Given the monoid of natural numbers $(\nn,0,+)$, one can define the group $\zz$ of integers to have underlying set $(\nn\times\nn)/\sim$, where $(a^-,a^+)\sim(b^-,b^+)$ iff $b^-+a^+=b^++a^-$; this discrete category is equivalent to the essentially discrete groupoid $\Int(\cat{N})$, where $\cat{N}$ is the discrete traced category from \cref{ex.cancellative}. Moreover, the zero, addition, and negation operations in $\zz$ are the monoidal unit, monoidal product, and duals in $\Int(\cat{N})$. 
\end{example}

\begin{example}
Consider the cocartesian traced category $(\smsetstar,0,+,\Tr)$ from \cref{ex.setstar_traced}. Then $\Int(\smsetstar)$ has objects given by pairs of sets $(A^-,A^+)$. We can denote such an object by a box with $A^-$-many regions on the left and $A^+$-many regions on the right.

For example, let $A\coloneqq(1,2)$, $B\coloneqq(2,2)$, and $C\coloneqq(2,2)$. The following depicts a map $A+B\tickar C$, sending the two elements of $C^-$ to one element from $B^-$ and the one from $A^-$, sending the two elements of $A^+$ to an element from $B^-$ and an element from $C^+$, and sending the two elements of $B^+$ to an element of $C^+$ and an element of $A^-$:
\begin{equation}\label[equation]{eqn.WD_set}
\begin{tikzpicture}[WD, baseline=(BoxA)]
  \begin{pgfonlayer}{background}
    \wdboxUnits[minimum width=7cm]{Outer2}{\raisebox{2.5cm}{$C$}}{0,0}{0,0}{6}
  \end{pgfonlayer}

  \begin{scope}[shift={(Outer2.center)},xshift=-1.5cm,
                yshift=-.25cm, transform shape,scale=\innerScale]
    \wdboxUnits[minimum width=4cm]{BoxA}{\Large $A$}{0}{0,0}{0}
  \end{scope}

  \begin{scope}[shift={(Outer2.center)},xshift=1.5cm,
                yshift=.5cm,transform shape,scale=\innerScale]
    \wdboxUnits[minimum width=4cm]{BoxB}{\Large $B$}{0,0}{0,0}{0}
  \end{scope}


  \tubeN{Outer2}{L}{2}{BoxA}{L}{1}{0}
 
	\tubeN{Outer2}{L}{1}{BoxB}{L}{1}{0}

  \tubeN{BoxA}{R}{2}{Outer2}{R}{2}{0}

  \tubeN{BoxA}{R}{1}{BoxB}{L}{2}{0}

	\tubeN{BoxB}{R}{1}{Outer2}{R}{1}{0}

  \loopTubeSmooth[below]{BoxB}{R}{2}{BoxA}{L}{1}{0}{BoxA}
\end{tikzpicture}
\end{equation}

\end{example}

\section{Strict maps and uniformity}\label[section]{sec.uniformity}

In \cite{hasegawa2004uniformity}, Hasegawa makes the following definition.
\begin{definition}[Strict map, uniformity]\label[definition]{def.strict_uniform}
Let $\cat{T}$ be a traced category. A morphism $s\colon U\to V$ is called \emph{strict} if, for every pair of objects $A,B:\ob\cat{T}$ and morphisms $f\colon A\otimes U\to B\otimes U$ and $g\colon A\otimes V\to B\otimes V$, the following implication holds:
\begin{equation}\label[equation]{eqn.uniformity}
\begin{tikzcd}
	A\otimes U\ar[r, "f"]\ar[d, "A\otimes s"']&B\otimes U\ar[d, "B\otimes s"]\\
	A\otimes V\ar[r, "g"']&B\otimes V\ar[ul, phantom, "\tiny\textit{commutes}"]
\end{tikzcd}
	\qimplies
	\Tr^U_{A,B}(f) = \Tr^V_{A,B}(g).
\end{equation}
A traced category is called \emph{uniform} if every morphism is strict. We denote the category of uniform traced categories and traced functors by $\Cat{UnifTrCat}$.
\end{definition}

Unlike the string diagrams for traced monoidal categories, the string diagram representation of strict maps or uniformity is not intuitive. However, the corresponding uniformity requirement (see \cite[Thm~5.2]{hasegawa2004uniformity}, \cite[Fig~1]{goncharov2016complete}) for cocartesian traced categories
\[
	f'\then (B+s) = s\then g'
	\qimplies
	\iter^U_B(f') = s\then\iter^V_B(g'),
\]
shown in string diagrams below, is more intuitive, e.g.\ by employing \cref{eqn.iter}:
\[
\begin{tikzpicture}[oriented WD, bb small, bb port length=3mm, baseline=(g')]
	\node[bb port sep=3.5, bb={1}{2}] (f') {$f'$};
	\node[bb={1}{1}, right=.5 of f'_out2] (h1) {$s$};
	\node[bb port sep=3.5, bb={1}{2}, right=10 of f'] (g') {$g'$};
	\node[bb={1}{1}, left=.5 of g'_in1] (h2) {$s$};
	\draw (f'_out2) -- (h1_in1);
	\draw (f'_out1) -- (h1_out1|-f'_out1);
	\draw (h2_out1) -- (g'_in1);
	\node at ($(f')!.5!(g')$) {$=$};
	\begin{scope}[font=\tiny]
  	\node[above=0 of f'_in1] {$U$};
  	\node[above=0 of f'_out1] {$B$};
  	\node[above=0 of f'_out2] {$U$};
  	\node[above=0 of h1_out1] {$V$};
  	\node[above=0 of h2_in1] {$U$};
  	\node[above=0 of h2_out1] {$V$};
  	\node[above=0 of g'_out1] {$B$};
  	\node[above=0 of g'_out2] {$V$};
	\end{scope}
\end{tikzpicture}
\qqimplies
\begin{tikzpicture}[oriented WD, bb small, bb port length=1mm, baseline=(f')]
	\node[bb port sep=3.5, bb={1}{2}] (f') {$f'$};
	\node[bb port sep=3.5, bb={1}{2}, right=8 of f'] (g') {$g'$};
	\node[bb={1}{1}, left=1.5 of g'] (h) {$s$};
	\node[circle, draw=black, fill=black, inner sep=.7pt, left=2mm] at (f'_in1) (dot1) {};
	\node[circle, draw=black, fill=black, inner sep=.7pt] at ($(h)!.5!(g')$) (dot2) {};
	\draw (f'_in1) -- (dot1) -- +(-3mm, 0);
	\draw (f'_out1) -- +(3mm,0);
	\draw let \p1 = (f'.south west), \p2 = (f'.south east), \n1={\y1-\bby}, \n2=\bbportlen in
		(f'_out2) to[out=0, in=0] (\x2+\n2,\n1) -- (\x1-\n2,\n1) to[out=180, in=260] (dot1);
	\draw (h_in1) -- +(-3mm, 0);
	\draw (h_out1) -- (dot2) -- (g'_in1);
	\draw let \p1 = (g'.south west), \p2 = (g'.south east), \n1={\y1-\bby}, \n2=\bbportlen in
		(g'_out2) to[out=0, in=0] (\x2+\n2,\n1) -- (\x1-\n2,\n1) to[out=180, in=260] (dot2);
	\draw (g'_out1) -- +(3mm,0);
	\node at ($(f')!.5!(h)$) {$=$};
	\begin{scope}[font=\tiny]
 		\node[above left=0 of f'_in1] {$U$};
  	\node[above right =0 of f'_out1] {$B$};
  	\node[above right=0 of h_out1] {$V$};
  	\node[above left=0 of h_in1] {$U$};
  	\node[above right=0 of g'_out1] {$B$};
	\end{scope}	
\end{tikzpicture}
\]

\begin{lemma}\label[lemma]{lemma.hasegawa}
In any traced category, each isomorphism is strict, and the monoidal product $s\otimes s'$ of strict maps $s,s'$ is again strict. 
\end{lemma}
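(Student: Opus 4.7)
For the first claim, assume $s\colon U\to V$ is an isomorphism and the square in~\eqref{eqn.uniformity} commutes for some $f, g$. The plan is to set $k\coloneqq (A\otimes s^{-1})\then f\colon A\otimes V\to B\otimes U$ and observe that $(A\otimes s)\then k = f$ is immediate, while $k\then (B\otimes s) = g$ follows by cancelling $s$ against $s^{-1}$ and using the commuting square. Then dinaturality~\eqref{eqn.tr_dinaturality} applied to $k$ with $h = s$ yields $\Tr^U_{A,B}(f) = \Tr^V_{A,B}(g)$ in one line.

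For the second claim, let $s\colon U\to V$ and $s'\colon U'\to V'$ be strict, and suppose $f\colon A\otimes U\otimes U'\to B\otimes U\otimes U'$ and $g\colon A\otimes V\otimes V'\to B\otimes V\otimes V'$ satisfy the strictness hypothesis for $s\otimes s'$. The plan is to reduce to strictness of $s$ via partial traces. Using monoidality~\eqref{eqn.monoidal_in_U}, set
\[
\hat f \coloneqq \Tr^{U'}_{A\otimes U, B\otimes U}(f) \qand \hat g \coloneqq \Tr^{V'}_{A\otimes V, B\otimes V}(g),
\]
so that $\Tr^{U\otimes U'}_{A,B}(f) = \Tr^U_{A,B}(\hat f)$ and $\Tr^{V\otimes V'}_{A,B}(g) = \Tr^V_{A,B}(\hat g)$. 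The goal becomes to verify the uniformity hypothesis $\hat f\then (B\otimes s) = (A\otimes s)\then \hat g$, after which strictness of $s$ closes the argument.

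To establish that intermediate square, naturality~\eqref{eqn.tr_naturality} lets one absorb the pre- and post-composition into the partial traces, rewriting both sides as traces $A\otimes U\to B\otimes V$ whose integrands are $p\coloneqq f\then ((B\otimes s)\otimes U')$ and $q\coloneqq ((A\otimes s)\otimes V')\then g$. The interchange identity $(s\otimes U')\then (V\otimes s') = s\otimes s' = (U\otimes s')\then (s\otimes V')$, together with the hypothesis on $f, g$, shows that $p, q$ fit into a commuting square witnessing the strictness hypothesis for $s'$, with outer objects $A\otimes U$ and $B\otimes V$. Strictness of $s'$ then equates $\Tr^{U'}(p)$ with $\Tr^{V'}(q)$, which is exactly the desired square for $\hat f, \hat g$. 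The main obstacle is the tensor bookkeeping across three factors; isolating the intermediate morphisms $\hat f, \hat g, p, q$ makes the role of each trace axiom (monoidality, naturality, strictness of $s'$, strictness of $s$) explicit and keeps the manipulation manageable.
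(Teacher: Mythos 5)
Your proof is correct and follows exactly the route the paper indicates: the isomorphism case by applying dinaturality \eqref{eqn.tr_dinaturality} to $k=(A\otimes s^{-1})\then f$, and the tensor case by splitting the trace via monoidality \eqref{eqn.monoidal_in_U}, massaging with naturality \eqref{eqn.tr_naturality}, and invoking strictness of $s'$ and then $s$ (this is Hasegawa's own argument, which the paper cites rather than reproduces). No gaps.
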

\begin{proof}
The first follows directly from dinaturality \eqref{eqn.tr_dinaturality} and the second follows from naturality \eqref{eqn.tr_naturality} and monoidality \eqref{eqn.monoidal_in_U}; see \cite[Lemmas 4.1, 4.3]{hasegawa2004uniformity}.
\end{proof}

Not all morphisms in a traced category are strict; for example, Hasegawa in \cite[]{hasegawa2004uniformity} shows that in $\fdvect$ a morphism is strict iff it is an isomorphism. 

\begin{warning}\label[warning]{warn.cant_compose_strict}
The composite of strict maps need not be strict. See \cite[Proposition 4.1]{hasegawa2004uniformity}.
\end{warning}

We next come to our main example of interest.

\begin{proposition}[Hasegawa \cite{hasegawa2004uniformity}]\label[proposition]{prop.setstar_traced}
The cocartesian traced category $\smsetstar$ is uniform.
\end{proposition}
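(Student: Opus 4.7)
The plan is to reduce the proposition to the iteration-operator formulation of uniformity given in \cref{rem.iter}, and then exploit the terminal $p$-coalgebra construction of $\iter$ described in \cref{ex.setstar_traced}.

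Concretely, by \cref{rem.iter}, it suffices to show: given pointed functions $f\colon U+1\to B+U+1$, $g\colon V+1\to B+V+1$, and $s\colon U+1\to V+1$ in $\smset$ (representing morphisms in $\smsetstar$) satisfying $f\then (B+s)=s\then g$, one has $\iter^U_B(f)=s\then\iter^V_B(g)$. The key observation is that this commuting equation is exactly the statement that $s$ is a morphism of coalgebras for the endofunctor $p(-)=B+(-)$ on $\smset$, from the coalgebra $(U+1,f)$ to the coalgebra $(V+1,g)$.

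By \cref{ex.setstar_traced}, $\iter^U_B(f)=\phi_f\then(\pi_B+1)$, where $\phi_f\colon U+1\to(\nn\times B)+1$ is the unique $p$-coalgebra morphism into the terminal $p$-coalgebra and $\pi_B\colon\nn\times B\to B$ is the projection; analogously $\iter^V_B(g)=\phi_g\then(\pi_B+1)$. Since $p$-coalgebra morphisms compose, $s\then\phi_g$ is a $p$-coalgebra morphism from $(U+1,f)$ into the terminal coalgebra, so uniqueness forces $\phi_f=s\then\phi_g$; postcomposing with $\pi_B+1$ yields the desired equality. The only subtlety is verifying that all maps in sight are pointed, which follows from \cref{ex.setstar_traced}: every coalgebra there is equipped with a morphism from the trivial coalgebra $1\to p(1)$, and this pointing is preserved by unique coalgebra morphisms into the terminal. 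I expect no genuine obstacle here: once the uniformity square is recognized as a coalgebra-morphism condition, the universal property of the terminal $p$-coalgebra does all the work.
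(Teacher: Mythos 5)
Your proof is correct and takes essentially the same route as the paper: both recognize the uniformity square as a $p$-coalgebra morphism for $p=\yon+B$ and invoke uniqueness of the map into the terminal coalgebra $(\nn\times B)+1$. You simply spell out the final step (factoring $\iter$ through $\phi_f$ and the projection) in slightly more detail than the paper does.
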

\begin{proof}
Suppose given a commutative diagram of the following form in pointed sets:
\begin{equation}\label[equation]{eqn.coalg_map}
\begin{tikzcd}[sep=small]
	U+1\ar[rr, "f"]\ar[dd, "s"']&&
		B+U+1\ar[dd, "B+s"]\\
	&{\color{gray!80}1}\ar[ul, gray!80]\ar[ur, gray!80]\ar[dl, gray!80]\ar[dr, gray!80]\\
	V+1\ar[rr, "g"']&&
		B+V+1
\end{tikzcd}
\end{equation}
Taking $p\coloneqq\yon+B$ as in \cref{ex.setstar_traced}, diagram \eqref{eqn.coalg_map} constitutes a map $s\colon f\to g$ of $p$-coalgebras, and since $(\nn\times B)+1$ is terminal, we indeed have $\iter^U_B(f)=s\then\iter^{V}_B(g)\colon U+1\to B+1$, as desired.
\end{proof}

\section{New uniform traced categories from known ones}\label{sec.new_from_known}
Hasegawa shows in \cite{hasegawa2004uniformity} that if $\cat{T}$ is uniform traced, then so is its arrow category. Surprisingly, he does not show that for any category $\cat{C}$, the functor category $\cat{T}^\cat{C}=\smcat(\cat{C},\cat{T})$ is uniform traced, but the result is completely analogous. 

Given a symmetric monoidal category $(\cat{T},I,\otimes)$ and a category $\cat{C}$, let $(I^\cat{C},\otimes^\cat{C})$ be the symmetric monoidal structure defined pointwise on $\cat{T}^\cat{C}$:
\[
	I^\cat{C}(c)\coloneqq I
  \qqand
  (F\otimes^\cat{C} G)(c)\coloneqq F(c)\otimes G(c).
\]

\begin{proposition}\label[proposition]{prop.uniform_traced_exponent}
If $(\cat{T},I,\otimes)$ is a uniform traced category, then so is $(\cat{T}^\cat{C},I^\cat{C},\otimes^\cat{C})$.
\end{proposition}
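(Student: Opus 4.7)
The plan is to transport both the symmetric monoidal structure and the trace pointwise: given a natural transformation $f\colon F\otimes^\cat{C} U\to G\otimes^\cat{C} U$ in $\cat{T}^\cat{C}$, define
$\bigl(\Tr^U_{F,G}(f)\bigr)_c := \Tr^{U(c)}_{F(c),G(c)}(f_c)$
for each $c\in\ob\cat{C}$. The only nontrivial content is verifying that this assignment is actually a natural transformation $F\to G$; once that is established, naturality in $F,G$, dinaturality in $U$, monoidality, superposing, yanking, and uniformity of $\cat{T}^\cat{C}$ all reduce, at each $c$, to the corresponding property of $\cat{T}$.

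The heart of the argument is naturality of $\Tr^U_{F,G}(f)$ along an arbitrary morphism $h\colon c\to c'$ of $\cat{C}$. Writing $g_c := \Tr^{U(c)}_{F(c),G(c)}(f_c)$, I would use naturality of the trace in its object arguments (axiom \eqref{eqn.tr_naturality}) to rewrite $g_c\then G(h)$ and $F(h)\then g_{c'}$ as traces on the common pair $(F(c),G(c'))$, with trace objects $U(c)$ and $U(c')$ respectively. The two resulting integrands are $f_c\then (G(h)\otimes U(c))$ and $(F(h)\otimes U(c'))\then f_{c'}$. Setting $s := U(h)\colon U(c)\to U(c')$, the commutativity hypothesis of strictness \eqref{eqn.uniformity} for $s$ collapses, after tensoring out the components of $s$, to exactly the naturality square of $f$ at $h$, namely $f_c\then (G(h)\otimes U(h)) = (F(h)\otimes U(h))\then f_{c'}$. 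Since $\cat{T}$ is uniform, every map is strict, so $s$ is strict and the two traces agree; hence $g_c\then G(h) = F(h)\then g_{c'}$, as required.

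This naturality step is the main obstacle, and it is exactly where the hypothesis of uniformity on $\cat{T}$ enters in an essential way. All remaining steps are routine. Each axiom of \cref{def.traced} is a universally quantified equation of natural transformations, and since both sides are defined pointwise, the axiom holds in $\cat{T}^\cat{C}$ iff it holds at every $c$, which it does by the corresponding axiom of $\cat{T}$. Similarly, for uniformity of $\cat{T}^\cat{C}$: if $s\colon U\to V$ is a natural transformation in $\cat{T}^\cat{C}$ and $f,g$ make the square in \eqref{eqn.uniformity} commute, then that square commutes at each $c$, so strictness of $s_c$ in $\cat{T}$ (guaranteed by uniformity of $\cat{T}$) gives $\Tr^{U(c)}_{F(c),G(c)}(f_c) = \Tr^{V(c)}_{F(c),G(c)}(g_c)$ for all $c$, which is the desired equality of natural transformations.
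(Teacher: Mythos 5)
Your proposal is correct and follows essentially the same route as the paper: define the monoidal structure and trace pointwise, isolate naturality of the pointwise trace as the one nontrivial step, and establish it by combining naturality of the trace in $A,B$ with strictness of $U(h)$ (guaranteed by uniformity of $\cat{T}$), the commutativity hypothesis being exactly the naturality square of $f$ at $h$. The remaining axioms, including uniformity of $\cat{T}^\cat{C}$, reduce componentwise just as you say.
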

\begin{proof}
For $A,B,U\colon\cat{C}\to\cat{T}$, the proposed trace $\Tr^{U}_{A,B}\colon\cat{T}^\cat{C}(A\otimes U,B\otimes U)\to\cat{T}^\cat{C}(A,B)$ is again given pointwise, i.e.\ for any natural transformation $\alpha\colon A\otimes U\to B\otimes U$, the component of $\Tr^U_{A,B}(f)$ at $c$ is given by
\[
\left(\Tr^U_{A,B}(\alpha)\right)_c\coloneqq\Tr^{U(c)}_{A(c),B(c)}(\alpha_c)\colon A(c)\to B(c).
\]

To see that this is natural, i.e.\ well-defined in $\cat{T}^\cat{C}$, suppose given any map $s\colon c\to c'$. The following diagram commutes because $\alpha$ is natural:
\[
\begin{tikzcd}[column sep=50pt]
	A(c)\otimes U(c)\ar[r, "\alpha_c"]\ar[d, "A(c)\otimes U(s)"']&B(c)\otimes U(c)\ar[r, "B(s)\otimes U(c)"]&B(c')\otimes U(c)\ar[d, "B(c')\otimes U(s)"]\\
	A(c)\otimes U(c')\ar[r, "A(s)\otimes U(c')"']&A(c')\otimes U(c')\ar[r, "\alpha_{c'}"']&B(c')\otimes U(c')	
\end{tikzcd}
\]
so we may apply uniformity \eqref{eqn.uniformity}. Together with naturality \eqref{eqn.tr_naturality}, we have
\begin{align*}
  \Tr^{U(c)}_{A(c),B(c)}(\alpha_c)\then B(s)&\overset{\eqref{eqn.tr_naturality}}{=}
  \Tr^{U(c)}_{A(c),B(c')}(\alpha_c\then B(s)\otimes U(c))\\&\overset{\eqref{eqn.uniformity}}{=}  \Tr^{U(c')}_{A(c),B(c')}(A(s)\otimes U(c')\then\alpha_{c'})\\&\overset{\eqref{eqn.tr_naturality}}{=}
  A(s)\then\Tr^{U(c')}_{A(c'),B(c')}(\alpha_{c'})
\end{align*}
meaning that the required naturality square for $s$ and $\Tr^U_{A,B}(\alpha)$ commutes. 

The axioms of uniform traced categories (\cref{def.traced,def.strict_uniform}) are straightforward, because all definitions are pointwise.
\end{proof}

\begin{proposition}
For any uniform traced category $\cat{T}$, there is a functor $\cat{T}^-\colon\smcat\op\to\Cat{UnifTrCat}$.
\end{proposition}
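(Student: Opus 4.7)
The plan is to define the functor on objects by $\cat{C}\mapsto\cat{T}^\cat{C}$, which is uniform traced by \cref{prop.uniform_traced_exponent}, and on morphisms by precomposition: a functor $H\colon\cat{C}\to\cat{D}$ is sent to $\cat{T}^H\coloneqq(-)\circ H\colon\cat{T}^\cat{D}\to\cat{T}^\cat{C}$. Contravariance of $\cat{T}^-$ then follows immediately from the fact that $(-)\circ(K\circ H)=((-)\circ K)\circ H$, and the identity axiom is equally trivial, so the real content is to check that $\cat{T}^H$ is a morphism in $\Cat{UnifTrCat}$, i.e.\ a traced (strong symmetric monoidal and trace-preserving) functor.

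First I would observe that $\cat{T}^H$ is strictly symmetric monoidal: since both the unit $I^\cat{D}$ and the tensor $\otimes^\cat{D}$ in $\cat{T}^\cat{D}$ are defined pointwise, one has $I^\cat{D}\circ H=I^\cat{C}$ and $(F\otimes^\cat{D} G)\circ H=(F\circ H)\otimes^\cat{C}(G\circ H)$ on the nose, and similarly for the associator, unitor, and braiding, which are all defined componentwise. Next, for trace preservation, given $A,B,U\colon\cat{D}\to\cat{T}$ and a natural transformation $\alpha\colon A\otimes^\cat{D} U\to B\otimes^\cat{D} U$, the proof of \cref{prop.uniform_traced_exponent} defines the trace pointwise by $(\Tr^U_{A,B}(\alpha))_d=\Tr^{U(d)}_{A(d),B(d)}(\alpha_d)$. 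Evaluating both $\Tr^U_{A,B}(\alpha)\circ H$ and $\Tr^{U\circ H}_{A\circ H,B\circ H}(\alpha\circ H)$ at an object $c:\cat{C}$ gives the same value $\Tr^{U(Hc)}_{A(Hc),B(Hc)}(\alpha_{Hc})$, so the two natural transformations agree.

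Since $\cat{T}^H$ is trace-preserving on the nose, it is in particular a traced functor, and because $\cat{T}^\cat{C}$ is uniform for every $\cat{C}$ by \cref{prop.uniform_traced_exponent}, $\cat{T}^H$ is a morphism in $\Cat{UnifTrCat}$. There is no substantive obstacle here: everything is forced by the pointwise nature of the monoidal and trace structures on $\cat{T}^\cat{C}$, and the only thing one might worry about—that precomposition could fail to preserve the trace up to coherence—is ruled out by the strict equality above. Thus $\cat{T}^-\colon\smcat\op\to\Cat{UnifTrCat}$ is a well-defined functor.
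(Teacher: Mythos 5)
Your proposal is correct and takes essentially the same route as the paper: the paper's (one-line) proof likewise defines the functor by precomposition and observes that strong monoidality and trace preservation hold because all the structures on $\cat{T}^\cat{C}$ are defined pointwise. You simply spell out the pointwise verifications that the paper leaves implicit.
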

\begin{proof}
We gave this functor on objects in \cref{prop.uniform_traced_exponent}. Given a functor $\cat{C}\to\cat{D}$, the induced functor $\cat{T}^\cat{D}\to\cat{T}^\cat{C}$ is strong monoidal and preserves trace because all these structures are defined pointwise. 
\end{proof}

In particular, for any $c\in\ob\cat{C}$, the evaluation $F\mapsto F(c)$ is a traced functor $\cat{T}^\cat{C}\to\cat{T}$. 

\begin{proposition}\label[proposition]{prop.ff_uniform_traced}
If $\cat{T}$ is (uniform) traced and $F\colon\cat{T}'\to\cat{T}$ is a fully faithful strong monoidal functor, then $\cat{T}'$ inherits a (uniform) traced structure for which $F$ is a traced functor.
\end{proposition}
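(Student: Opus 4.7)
The plan is to \emph{transport} the trace across $F$: for $f\colon A\otimes U\to B\otimes U$ in $\cat{T}'$, let $\varphi_{X,Y}\colon F(X\otimes Y)\xrightarrow{\;\iso\;}F(X)\otimes F(Y)$ denote the strong monoidal structure map, and consider the composite
\[
\tilde f\coloneqq \varphi_{A,U}\inv\then F(f)\then \varphi_{B,U}\colon F(A)\otimes F(U)\to F(B)\otimes F(U)
\]
in $\cat{T}$. Apply the trace of $\cat{T}$ to obtain a map $\Tr^{F(U)}_{F(A),F(B)}(\tilde f)\colon F(A)\to F(B)$. Because $F$ is fully faithful, there is a unique morphism $A\to B$ in $\cat{T}'$ whose image under $F$ is this trace; define $\Tr^U_{A,B}(f)$ to be that morphism. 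By construction, $F$ sends traces to traces (up to the canonical isomorphisms $\varphi$), so $F$ is a traced functor.

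Next I would verify the five axioms of \cref{def.traced} for this trace. Each axiom is an equality between certain morphisms in $\cat{T}'$, and since $F$ is faithful, it suffices to verify the equality after applying $F$. But after applying $F$ and using the naturality and coherence of $\varphi$ to shuffle the monoidal structure maps past functoriality, each such equation reduces to the corresponding axiom in $\cat{T}$. Naturality in $A,B$ and superposing use only naturality of $\varphi$; monoidality in $U$ additionally uses the unit coherence $F(I)\iso I$ and the associativity coherence of $\varphi$; yanking uses that $F$ preserves symmetries (part of the strong monoidal data); dinaturality in $U$ uses naturality of $\varphi$ in the looping variable.

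For the ``uniform'' clause, suppose $\cat{T}$ is uniform and let $s\colon U\to V$, $f\colon A\otimes U\to B\otimes U$, and $g\colon A\otimes V\to B\otimes V$ in $\cat{T}'$ satisfy $(A\otimes s)\then g = f\then(B\otimes s)$. Applying $F$ and conjugating by the isomorphisms $\varphi$ turns this into the commuting square $(F(A)\otimes F(s))\then\tilde g=\tilde f\then(F(B)\otimes F(s))$ in $\cat{T}$. Since $\cat{T}$ is uniform, $F(s)$ is strict, so $\Tr^{F(U)}_{F(A),F(B)}(\tilde f)=\Tr^{F(V)}_{F(A),F(B)}(\tilde g)$. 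These are, by definition, the images under $F$ of $\Tr^U_{A,B}(f)$ and $\Tr^V_{A,B}(g)$, respectively, and faithfulness of $F$ gives the desired equality. Hence $s$ is strict in $\cat{T}'$, and since $s$ was arbitrary, $\cat{T}'$ is uniform.

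The main obstacle is purely bookkeeping: carefully inserting and cancelling the coherence isomorphisms $\varphi_{X,Y}$, $F(I)\iso I$, and the symmetry-preservation isomorphism when translating each axiom between $\cat{T}'$ and $\cat{T}$. No substantive difficulty arises because, by Mac Lane coherence, any two ways of composing these isomorphisms with the same source and target agree.
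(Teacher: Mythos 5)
Your proposal is correct and follows essentially the same route as the paper: define the trace of $f$ in $\cat{T}'$ by applying $F$, tracing in $\cat{T}$, and using full faithfulness to pull the result back uniquely, then check axioms and uniformity by faithfulness. The paper's own proof is terser (it suppresses the coherence isomorphisms $\varphi$ and simply asserts that the axioms "hold automatically"), so your extra bookkeeping is just a more explicit rendering of the same argument.
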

\begin{proof}
Given objects $A,B,U\in\cat{T}'$ and a map $g'\colon A\otimes U\to B\otimes U$ in $\cat{T}$', we want its trace in $\cat{T}'$. So let $g\coloneqq F(g')$ be its image in $\cat{T}$ and take the trace $f\coloneqq\Tr^{F(U)}_{F(A),F(B)}(F(g'))\colon F(A)\to F(B)$. Because $F$ is fully faithful, there is a unique map $f'\colon A\to B$ in $\cat{T}'$ with $F(f')=f$, and we can take $f'$ to be the trace of $g'$. All axioms of traced categories hold automatically. 

Similarly, a map $s'\colon A\to B$ is strict in $\cat{T}'$ iff its image $F(s')$ is strict in $\cat{T}$. In particular, if $\cat{T}$ is uniform then so is $\cat{T}'$.
\end{proof}

\section{$\polystar$ is uniform traced}\label[section]{sec.int_polystar}

Recall $\polystar$ from \cref{lemma.lex_star,prop.complemented_points}; it is cocartesian monoidal because it is the Kleisli category of a monad $p\mapsto p+1$ on $\poly$. In this section we establish that $\polystar$, and its multivariate version $\smset[\cat{L}]$, is uniform traced.

\begin{lemma}\label[lemma]{lemma.ff_strong_polystar_setsetstar}
There is a fully faithful strong monoidal functor $\polystar\to\Cat{Fun}(\smset,\smsetstar)$. 

More generally, there is a fully faithful strong monoidal functor $\smset[\cat{L}]_\star\to\Cat{Fun}(\cat{L}\set,\smsetstar)$.
\end{lemma}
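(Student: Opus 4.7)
The plan is to build the functor $\Phi \colon \polystar \to \Cat{Fun}(\smset, \smsetstar)$ by composing the representation of polynomials as endofunctors of $\smset$ with the wide inclusion $\smset \hookrightarrow \smsetstar$, and then to verify the three required properties. On objects I would send $p : \polystar$ to the functor $\smset \to \smsetstar$ given by $X \mapsto p(X)$ post-composed with the inclusion. On morphisms, using the description $\polystar \cong 1/\poly$ from \cref{lemma.lex_star}, a map $\phi \colon p \parto q$ is a natural transformation $\phi \colon p \to q + 1$ in $\poly$; its $X$-component is a function $p(X) \to q(X) + 1$, i.e.\ exactly a partial map $p(X) \parto q(X)$ in $\smsetstar$. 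Naturality of $\Phi(\phi)$ is only required against total morphisms $f \colon X \to Y$ in $\smset$, and for such $f$ it follows immediately from naturality of $\phi$ in $\poly$. Functoriality is then automatic: Kleisli composition in $\polystar$ and partial-map composition in $\smsetstar$ are both induced by the monad $(-)+1$, and they match pointwise.

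For strong monoidality, the monoidal structure on $\polystar$ is the coproduct inherited from $\poly$, and the pointwise monoidal structure on $\Cat{Fun}(\smset, \smsetstar)$ uses the coproduct in $\smsetstar$, which agrees with the coproduct in $\smset$ because $\smset \hookrightarrow \smsetstar$ preserves coproducts. Since $(p+q)(X) = p(X) + q(X)$ and $0(X) = 0$ as sets, the comparison isomorphisms are identities and all coherences are inherited trivially from $\poly$.

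For full faithfulness, I would unfold definitions: a natural transformation $\alpha \colon \Phi(p) \to \Phi(q)$ in $\Cat{Fun}(\smset, \smsetstar)$ is a family of partial maps $\alpha_X \colon p(X) \parto q(X)$ natural in \emph{total} $f \colon X \to Y$, equivalently a family of total functions $\alpha_X \colon p(X) \to q(X) + 1$ natural in total $f$. Thus $\alpha$ is precisely a natural transformation $p \to q + 1$ between the underlying endofunctors $\smset \to \smset$; since both $p$ and $q + 1$ are polynomial functors and $\poly$ is by definition a full subcategory of $\Cat{Fun}(\smset, \smset)$, such a transformation corresponds uniquely to a morphism in $\poly$, and hence to a morphism $p \parto q$ in $\polystar$. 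The multivariate case proceeds identically after replacing $\smset$ by $\cat{L}\set$ and the representable embedding by the fully faithful functor $\Exp{\cat{L}} \colon \smset[\cat{L}] \to \Cat{Fun}(\cat{L}\set, \smset)$ from \eqref{eqn.set_L_basics}.

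I do not anticipate a serious obstacle; the only delicate point is keeping track of which class of morphisms naturality is required against. It is crucial that the source category of our functors is $\smset$ (respectively $\cat{L}\set$) rather than $\smsetstar$: only then are naturality squares demanded against \emph{total} maps, which is exactly what lets us identify natural transformations in $\Cat{Fun}(\smset, \smsetstar)$ with natural transformations $p \to q + 1$ in $\poly$, and thereby extract fullness from the definition of $\poly$.
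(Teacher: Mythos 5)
Your proof is correct, but it takes a genuinely different route from the paper's. You work with the Kleisli presentation of $\polystar$ from \cref{lemma.lex_star} (morphisms are natural transformations $p\to q+1$) and verify everything by direct pointwise computation: the key observation is that a transformation $\Phi(p)\to\Phi(q)$ in $\Cat{Fun}(\smset,\smsetstar)$ is a family $p(X)\to q(X)+1$ natural against total maps only, hence exactly a natural transformation $p\Rightarrow q+1$ of endofunctors, which lies in $\poly$ because $\poly$ is by definition full in $\Cat{Fun}(\smset,\smset)$ (and in the multivariate case because $\Exp{\cat{L}}$ is fully faithful and $q+1$ is in its image). The paper instead uses the coslice presentation $\smset[\cat{L}]_\star\cong 1/\smset[\cat{L}]$: it invokes the isomorphism $1/\Cat{Fun}(\cat{C},\smset)\cong\Cat{Fun}(\cat{C},1/\smset)$, the general fact that $1/F$ is fully faithful when $F$ is fully faithful and preserves the terminal object, and then checks strong monoidality by showing that pushouts of spans $p\from 1\to q$ are preserved, using \cref{prop.complemented_points} to write such a span as $p'+1\from 1\to q'+1$. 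Your version is more hands-on and arguably more self-contained (it needs neither the isolated-points property nor the pushout computation, only that coproducts in a Kleisli category over a cocartesian base are computed in the base); the paper's version is shorter to state and generalizes mechanically because all the work is delegated to formal properties of coslices. Both treat the multivariate case uniformly via $\Exp{\cat{L}}$, and your closing remark about naturality being tested only against total maps is exactly the delicate point that makes the fullness argument go through.
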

\begin{proof}
First note that for any $\cat{C}$ there is an isomorphism $1/\Cat{Fun}(\cat{C},\smset)\cong\Cat{Fun}(\cat{C},1/\smset)$.
Suppose $F\colon\cat{C}\to\cat{D}$ preserves the terminal object. If it is fully faithful then so is the induced functor $1/F\colon 1/\cat{C}\to1/\cat{D}$; thus the functor
\[\smset[\cat{L}]_\star\cong1/\smset[\cat{L}]\To{1/\Exp{L}} 1/\Cat{Fun}(\cat{L}\set,\smset)\cong\Cat{Fun}(\cat{L}\set,\smsetstar)\]
is fully faithful. 

It remains to show that $1/\smset[\cat{L}]\to1/\Cat{Fun}(\cat{L}\set,\smset)$ is strong monoidal, i.e.\ preserves coproducts, i.e.\ that $\smset[\cat{L}]\to\Cat{Fun}(\cat{L}\set,\smset)$ preserves pushouts of spans with the form $p\from 1\to q$. By \cref{prop.complemented_points}, the above span has the form $p'+1\from 1\to q'+1$, and its pushout in $\Cat{Fun}(\cat{L}\set,\smset)$ is again a polynomial, namely $(p'+q'+1)\in\smset[\cat{L}]$.
\end{proof}

\begin{corollary}\label[corollary]{cor.polystar_traced}
The cocartesian monoidal category $(\smset[\cat{L}]_\star,0,+)$, and in particular $(\polystar,0,+)$, is uniform traced. 

Moreover, for each $X:\cat{L}\set$, the functor $\smset[\cat{L}]_\star\to\smsetstar$ sending $(1\to p)\mapsto(1\to \Exp{\cat{L}}(p)(X))$ is a traced functor.
\end{corollary}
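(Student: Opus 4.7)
The plan is to assemble the corollary by chaining together earlier results, with no genuinely new work required. First, \cref{prop.setstar_traced} says $\smsetstar$ is uniform traced, so \cref{prop.uniform_traced_exponent} applied with $\cat{C}\coloneqq\cat{L}\set$ gives a uniform traced structure on $\Cat{Fun}(\cat{L}\set,\smsetstar)$, with coproduct and trace defined pointwise. Then \cref{lemma.ff_strong_polystar_setsetstar} supplies a fully faithful strong monoidal functor
\[
  \smset[\cat{L}]_\star\longrightarrow\Cat{Fun}(\cat{L}\set,\smsetstar),
\]
so \cref{prop.ff_uniform_traced} pulls the uniform traced structure back along it, equipping $\smset[\cat{L}]_\star$ with a uniform traced structure. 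Specializing via $\poly\cong\smset[\{\yon\}]$ in \eqref{eqn.set_L_basics} gives the stated fact for $\polystar$.

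For the second claim, the functor in question factors as
\[
  \smset[\cat{L}]_\star\longrightarrow\Cat{Fun}(\cat{L}\set,\smsetstar)\xrightarrow{\mathrm{ev}_X}\smsetstar,
\]
where the first arrow is the embedding from \cref{lemma.ff_strong_polystar_setsetstar} and the second is evaluation at $X$. Unwinding the embedding, a pointed polynomial $(1\to p)$ is sent to $(1\to\Exp{\cat{L}}(p))$, which evaluates at $X$ to $(1\to\Exp{\cat{L}}(p)(X))$, exactly as described in the statement. The first arrow is a traced functor by construction, since the trace on $\smset[\cat{L}]_\star$ is by definition pulled back along it via \cref{prop.ff_uniform_traced}. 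The second is a traced functor by the remark immediately following the proof of \cref{prop.uniform_traced_exponent}, which notes that each evaluation $F\mapsto F(X)$ is traced because monoidal structure and trace on the functor category are defined pointwise. The composite of traced functors is traced.

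There is no genuine obstacle in this argument; every clause has been pre-arranged by the preceding section. The only point deserving a moment's care is confirming that the composite through $\mathrm{ev}_X$ literally coincides with the functor $(1\to p)\mapsto(1\to\Exp{\cat{L}}(p)(X))$ stated in the corollary, but this is immediate from the definition of the fully faithful embedding produced in \cref{lemma.ff_strong_polystar_setsetstar}, which is essentially $1/\Exp{\cat{L}}$ followed by the canonical identification $1/\Cat{Fun}(\cat{L}\set,\smset)\cong\Cat{Fun}(\cat{L}\set,\smsetstar)$.
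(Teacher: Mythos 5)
Your proposal is correct and follows exactly the same chain as the paper, whose entire proof is the citation ``This follows from \cref{ex.setstar_traced,prop.uniform_traced_exponent,prop.ff_uniform_traced,lemma.ff_strong_polystar_setsetstar}.'' You have simply unpacked that chain (including the evaluation-at-$X$ step for the second claim, which the paper leaves to the remark that evaluation functors are traced), so there is nothing to add.
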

\begin{proof}
This follows from \cref{ex.setstar_traced,prop.uniform_traced_exponent,prop.ff_uniform_traced,lemma.ff_strong_polystar_setsetstar}.
\end{proof}

All of the upcoming results about $\polystar$ extend to the multivariate $\smset[\cat{L}]_\star$-case, but we leave the latter case implicit for typographical simplicity.

\chapter{$\Int(\polystar)$ as syntax for control flow and data transformations}\label[section]{chap.int_poly_wds}

Wiring diagrams are an operadic approach to compositionality; in \cref{sec.wds}, we recall this perspective. In \cref{sec.int_polystar_wd_syntax} we discuss the wiring diagram syntax corresponding to the operad $\cat{W}$ underlying $\Int(\polystar)$. In \cref{sec.w_alg_cats} we explain how $\cat{W}$-algebras give rise to traced categories. Finally in \cref{sec.bypass}, we define a parameterized version of this operad to deal with programs for which some functions need to store---rather than acting on---certain variables.

\section{Wiring diagrams}\label[section]{sec.wds}

In this section we explain the formalism for wiring diagrams as in \cref{eqn.WD_poly}, reproduced in \cref{ex.wd_full}. In the operadic perspective on wiring diagrams \cite{Spivak:2013b,Rupel.Spivak:2013a,Vagner.Spivak.Lerman:2015a,Yau:2015a,patterson2021wiring,selby2025generalised}, a (colored) operad $\cat{W}$ has objects $A$ that act as \emph{boxes} and morphisms $\varphi\colon A_1,\ldots A_N\to B$ that act as \emph{composition patterns} (often called \emph{wiring diagrams}) drawn as a finite number of ``small'' boxes $A_1,\ldots,A_N$ inside of a ``big'' box $B$. An operad functor $F\colon\cat{W}\to\smset$ assigns to each box $A:\ob\cat{W}$ the set $F(A):\ob\smset$ of all ``$F$-style fillers'' for that box, and assigns to each wiring diagram $\varphi$ the function $F(\varphi)\colon F(A_1)\times\cdots\times F(A_N)\to F(B)$ that takes a filler for each of the small boxes and puts them together under pattern $\varphi$ to make a filler for the big box.

The following example says that the composition patterns for traced categories are cobordisms; it is included for the reader's edification and is not essential.

\begin{example}\label[example]{ex.traced_cob_alg}
The composition patterns for traced categories $\cat{T}$ include the picture shown in \eqref{eqn.WD_traced}: the boxes are signed sets $(A^-,A^+)$, or equivalently oriented $0$-manifolds, and the wiring diagrams are oriented 1-cobordisms $(A_1^-,A_1^+)+\cdots+(A_N^-,A_N^+)\to(B^-,B^+)$. If $\cat{T}$'s objects are generated under $\otimes$ by a set $L$, then all the signed sets and cobordisms should be sliced over $L$. Here is a depiction of such a map with $N=2$ and $L=1$:
\[
\parbox{2.3in}{
\begin{tikzpicture}[oriented WD, bb min width =.7cm, bby=1.2ex, bbx=1.1cm,bb port length=5pt,
	label/.append style={node distance=1pt and -3pt}] 
  \node[bb={2}{1},, bb name={\tiny $A_1$}] (X1) {};
  \node[bb={1}{2}, right=1 of X1_out1, bb name={\tiny $A_2$}] (X2) {};
  \node[bb={0}{0}, fit={(X1) (X2) ($(X1.south)-(0,3)$) ($(X1.north)+(0,5)$)}, bb name={\footnotesize$B$}] (Y) {};
  \coordinate (Y_in1') at (Y.west|-X1_in2);
  \coordinate (Y_in2') at ($(Y_in1')-(0,20pt)$);
  \coordinate (Y_out1') at (Y.east|-X2_out2);
  \coordinate (Y_out2') at (Y.east|-Y_in2');
  \draw (Y_in1') to[in looseness=1.25] (X1_in2);
  \draw[ar,pos=.8] (X1_out1) to (X2_in1);
  \draw (X2_out2) to[out looseness=1.25] (Y_out1');
  \draw[ar] (Y_in2') -- (Y_out2');
  \draw[ar] let \p1=(X2.north east), \p2=(X1.north west), \n1={\y2+2*\bby}, \n2=\bbportlen in
   	(X2_out1) to[in=0, looseness=1.5] (\x1+.7*\n2,\n1) -- (\x2-.7*\n2,\n1) to[out=180, looseness=1.5] (X1_in1);
  \draw[label] 
        node[above left=2pt and -4pt of X1_in1] {\tiny$A^-_{1a}$}
        node[below left=of X1_in2] {\tiny$A^-_{1b}$}
        node[above right=of X1_out1] {\tiny$A^+_{1a}$}
        node[above left=of X2_in1] {\tiny$A^-_{2a}$}
        node[above right=2pt and -4pt of X2_out1] {\tiny$A^+_{2a}$}
        node[below right=of X2_out2] {\tiny${A}^+_{2b}$}
        node[left=3pt of Y_in1'] {\tiny$B^-_{a}$}
        node[left=3pt of Y_in2'] {\tiny$B^-_{b}$}
        node[right=3pt of Y_out1'] {\tiny$B^+_{a}$}
        node[right=3pt of Y_out2'] {\tiny$B^+_{b}$}
        ;
\end{tikzpicture}
}
\hspace{.7in}
\parbox{1.7in}{
\begin{tikzpicture}[x=1cm,y=1ex,node distance=1 and 1,semithick,every label quotes/.style={font=\everymath\expandafter{\the\everymath\scriptstyle}},every to/.style={out=0,in=180},baseline=(current bounding box.center)]
  \node ["$A^-_{1a}$" left] (X1a) {$-$};
  \node [below=0 of X1a, "$A^-_{1b}$" left] (X1b) {$-$};
  \node [below=0 of X1b, "$A^+_{1a}$" left] (X1c) {$+$};
  \node [below=1.5 of X1c, "$A^-_{2a}$" left] (X2a) {$-$};
  \node [below=0 of X2a, "$A^+_{2a}$" left] (X2b) {$+$};
  \node [below=0 of X2b, "$A^+_{2b}$" left] (X2c) {$+$};
  \node [below right=-1 and 2 of X1a, "$B^-_a$" right] (Ya) {$-$};
  \node [below=1.5 of Ya, "$B^-_b$" right] (Yb) {$-$};
  \node [below=1.5 of Yb, "$B^+_a$" right] (Yc) {$+$};
  \node [below=1.5 of Yc, "$B^+_b$" right] (Yd) {$+$};
  \draw (X1a) to[in=0] (X2b);
  \draw (X1b) to (Ya);
  \draw (X1c) to[in=0] (X2a);
  \draw (X2c) to (Yc);
  \draw (Yb) to[out=180] (Yd);
\end{tikzpicture}
}
\]
For more general $L$, each box-port (e.g.\ $A^-_{1a}$, $A^+_{2b}$, $B^+_a$, etc.) would be labeled with an element of $L$ and the wires would preserve these labels. We denote the operad of oriented $L$-labeled 1-cobordisms by $\cob/L$.

The data of a traced category $\cat{T}$ with objects freely generated by $L$ is equivalent to the data of a functor $T\colon\cob/L\to\smset$. For any two objects, i.e.\ lists of generating objects, $A^-=(\ell^-_1,\ldots,\ell^-_M)$ and $A^+=(\ell_1^+,\ldots,\ell^+_{N})$, the set of morphisms $\Hom_\cat{T}(A^-,A^+)$ corresponds with the set $T(A^-,A^+)$. For any composition pattern---any combination of compositions, tensors, and traces---applied to morphisms is determined by $T$ on the corresponding morphism in $\cob$. See \cite{Spivak.Schultz.Rupel:2016a} for a proof of this.
\end{example}

Something like the above idea will show up in \cref{sec.w_alg_cats}.


\section{$\cat{W}\coloneqq\Int(\polystar)$ as wiring diagram syntax}\label[section]{sec.int_polystar_wd_syntax}

By \cref{cor.polystar_traced} we know that $(\polystar,0,+)$, and the multivariate version $\smset[\cat{L}]$ for any category $\cat{L}$, is a traced category (\cref{cor.polystar_traced}). Hence, we can use the definition of $\Int$ in \cref{sec.int} to form the compact category $(\Int(\polystar),0,+)$. An object in it is a pair $P=(p^-,p^+)$ of polynomials, its monoidal product is given by pointwise sum $P_1+P_2\coloneqq(p_1^-+p_2^-,p_1^++p_2^+)$, and its morphisms are given by $(+1)$-Kleisli maps of polynomials
\[
  \phi\colon P\tickar Q
  \qqmeans
  \phi\colon q^-+p^+\to q^++p^-+1.
\]
\begin{warning}
Note that $\Int(\polystar)$ is not cocartesian, even though $\polystar$ is. Thus the inherited notation $(0,+)$ for the monoidal structure on $\Int(\polystar)$ could be misleading.
\end{warning}

Underlying the monoidal category $\Int(\polystar)$ is an operad $\cat{W}$ with objects and morphisms given by
\begin{equation}\label[equation]{eqn.operad_W}
\ob(\cat{W})\coloneqq\ob(\Int(\polystar))
\qqand
\cat{W}(P_1,\cdots,P_N; Q)\coloneqq\Int(\polystar)(P_1+\cdots+P_N,Q)
\end{equation}
Operad functors $\cat{W}\to\smset$ are in bijection with lax monoidal functors $(\Int(\polystar),0,+)\to(\smset,1,\times)$. We may switch freely between $\cat{W}$ and $\Int(\polystar)$.

\begin{example}\label[example]{ex.wd_full}
In the diagram below
\begin{equation}\label[equation]{eqn.wd_full}
\Phi\coloneqq\begin{tikzpicture}[WD, wdunit=7pt, halfunit=2pt, baseline=(BoxA.north)]
  \begin{pgfonlayer}{background}
    \wdboxUnits[minimum width=6cm]{Outer2}{}{1,1}{0,3}{8.5}
  \end{pgfonlayer}
  \node[below=0pt] at (Outer2.north) {$Q$};

  \begin{scope}[shift={(Outer2.center)},xshift=-1.5cm,
                yshift=-.45cm, transform shape,scale=.3]
    \wdboxUnits[minimum width=4cm]{BoxA}{\huge $P_1$}{2}{1,3}{0}
  \end{scope}

  \begin{scope}[shift={(Outer2.center)},xshift=1.5cm,
                yshift=.45cm,transform shape,scale=.3]
    \wdboxUnits[minimum width=4cm]{BoxB}{\huge $P_2$}{1,1}{1,2}{0}
  \end{scope}


  \tubeN{Outer2}{L}{2}{BoxA}{L}{1}{0}
  \draw[data wire] (Outer2-L-2-D1)
    to[out=0,in=180] (BoxA-L-1-D1);
  \draw[data wire] (Outer2-L-2-D1)
    to[out=0,in=180] (BoxA-L-1-D2);

	\tubeN{Outer2}{L}{1}{BoxB}{L}{1}{1}

  \tubeN{BoxA}{R}{2}{Outer2}{R}{2}{2}
  \draw[data wire] (BoxA-R-2-D2)
    to[out=0,in=180] (Outer2-R-2-D3);

  \tubeN{BoxA}{R}{1}{BoxB}{L}{2}{1}

	\tubeN{BoxB}{R}{1}{Outer2}{R}{1}{0}

  \loopTubeSmooth[below]{BoxB}{R}{2}{BoxA}{L}{1}{2}{BoxA}
\end{tikzpicture}
\end{equation}
we see the following objects in $\cat{W}=\Int(\polystar)$:
\[
	P_1\coloneqq(\yon^2,\yon+\yon^3),\qquad
	P_2\coloneqq(\yon+\yon,\yon+\yon^2),\qqand
	Q\coloneqq(\yon+\yon,1+\yon^3).
\]
The diagram represents a morphism $\Phi\colon P_1+P_2\tickar Q$ also see the following map of polynomials, from $Q^-+P_1^++P_2^+$ to $Q^++P_1^-+P_2^-$:
\begin{equation}\label[equation]{eqn.poly_depiction_of_wd}
\makeatletter
\tikzset{
  bot y/.store in=\boty,
  bot y=-1.5   
}
\makeatother\begin{tikzpicture}[decoration={brace, amplitude=5pt}, font=\tiny, inner sep=0]
    \node at (1,0) (topQ-1-1) {$\circ$};
    \node at (2,0) (topQ-2-1) {$\circ$};
    \node at (4,0) (topP1-1-1) {$\circ$};
    \node at (5,0) (topP1-2-1) {$\circ$};
    \node at (5.2,0) (topP1-2-2) {$\circ$};
    \node at (5.4,0) (topP1-2-3) {$\circ$};
    \node at (7,0) (topP2-1-1) {$\circ$};
    \node at (8,0) (topP2-2-1) {$\circ$};
    \node at (8.2,0) (topP2-2-2) {$\circ$};
		\node at (1,\boty) (botQ-1) {};
    \node at (2,\boty) (botQ-2-1) {$\circ$};
    \node at (2.2,\boty) (botQ-2-2) {$\circ$};
    \node at (2.4,\boty) (botQ-2-3) {$\circ$};
    \node at (4, \boty) (botP1-1-1) {$\circ$};
    \node at (4.2, \boty) (botP1-1-2) {$\circ$};
    \node at (6, \boty) (botP2-1-1) {$\circ$};
    \node at (7, \boty) (botP2-2-1) {$\circ$};
    \begin{scope}[every node/.style={draw, thick, blue!70, ellipse, dash pattern=on 0pt off 1pt}]
    	\node[fit=(topQ-1-1)] (topQ-1) {};
    	\node[fit=(topQ-2-1)] (topQ-2) {};
    	\node[fit=(topP1-1-1)] (topP1-1) {};
    	\node[fit=(topP1-2-1)(topP1-2-3)] (topP1-2) {};
    	\node[fit=(topP2-1-1)] (topP2-1) {};
    	\node[fit=(topP2-2-1)(topP2-2-2)] (topP2-2) {};
    	\node[fit=(botQ-1)] (botQ-1) {};
    	\node[fit=(botQ-2-1)(botQ-2-3)] (botQ-2) {};
    	\node[fit=(botP1-1-1)(botP1-1-2)] (botP1-1) {};
    	\node[fit=(botP2-1-1)] (botP2-1) {};
    	\node[fit=(botP2-2-1)] (botP2-2) {};
    \end{scope}
    \draw[decorate, thick] ($(topQ-1-1)+(-.3,.2)$) -- node[above=5pt] {$Q^{-}=\yon+\yon$} ($(topQ-2-1)+(.3,.2)$);
    \draw[decorate, thick] ($(topP1-1-1)+(-.3,.2)$) -- node[above=5pt] {$P_1^{+}=\yon+\yon^3$} ($(topP1-2-3)+(.3,.2)$);
    \draw[decorate, thick] ($(topP2-1-1)+(-.3,.2)$) -- node[above=5pt] {$P_2^{+}=\yon+\yon^2$} ($(topP2-2-2)+(.3,.2)$);
    \draw[decorate, thick] ($(botQ-2-3)+(.3,-.2)$) -- node[below=5pt] {$Q^+=1+\yon^3$} ($(botQ-1)+(-.3,-.2)$);
    \draw[decorate, thick] ($(botP1-1-2)+(.3,-.2)$) -- node[below=5pt] {$P_1^-=\yon^2$} ($(botP1-1-1)+(-.3,-.2)$);
    \draw[decorate, thick] ($(botP2-2-1)+(.3,-.2)$) -- node[below=5pt] {$P_2^{-}=\yon+\yon$} ($(botP2-1-1)+(-.3,-.2)$);
   \begin{scope}[every to/.style={out=90, in=270, looseness=.1}] 
		\draw (botQ-2-1) to (topP1-2-1);
		\draw (botQ-2-2) to (topP1-2-2);
		\draw (botQ-2-3) to (topP1-2-2);
    \draw (botP1-1-1) to (topQ-2-1);
    \draw (botP1-1-2) to (topQ-2-1);
    \draw (botP1-1-1) to (topP2-2-1);
    \draw (botP1-1-2) to (topP2-2-2);
    \draw (botP2-1-1) to (topQ-1-1);
    \draw (botP2-2-1) to (topP1-1-1);
  \end{scope}
  \begin{scope}[
    draw=blue!70,
    densely dotted,
    every to/.style={out=270, in=90, looseness=0.2}
  ]
 \foreach \i/\j in {
		topQ-1/botP2-1,%
		topQ-2/botP1-1,%
		topP1-1/botP2-2,%
		topP1-2/botQ-2,%
		topP2-1/botQ-1,%
		topP2-2/botP1-1}{
  \draw (\i.west) to (\j.west);
  \draw (\i.east) to (\j.east);
}
  \end{scope}
\end{tikzpicture}
\end{equation}
For example, from the upper right to the lower left part of \eqref{eqn.poly_depiction_of_wd} we see the visualization of a map from the $\yon^1$ summand of $P_2^+$ to the $\yon^0$ summand of $Q^+$. Namely, it is given by the unique element of the hom-set $\poly(\yon^1,\yon^0)\cong\smset(0,1)$.

In terms of control flow and data flow, each box has some number of blue regions which we call \emph{control regions}. We refer to a control region on the box's left as an  \emph{entrance} and to one on the right as an \emph{exit}. Each control region has some number of wires, which we call \emph{data slots}. The control is \emph{passed} forward from one box to another, given by $\Phi$ on polynomial summands, and the data slots in the latter \emph{receive} data from the former, arising from the Yoneda lemma again, $\poly(\yon^A,\yon^B)\cong\smset(B,A)$.

So in \eqref{eqn.wd_full} we would say that box $Q$ has two entrances, two exits, and four control regions. The two data slots in the entrance to $P_1$ both receive data from the single data slot in the second entrance of $Q$. And so on.
\end{example}

\begin{remark}
In \cref{ex.wd_full}, even if $Q^+$ had not included the $\yon^0$ summand, we could have drawn a very similar wiring diagram, 
\[
\Phi\coloneqq
\begin{tikzpicture}[WD, wdunit=7pt, halfunit=2pt, baseline=(BoxA)]
  \begin{pgfonlayer}{background}
    \wdboxUnits[minimum width=6cm]{Outer2}{}{1,1}{0,3}{8.5}
  \end{pgfonlayer}
  \node[below=0pt] at (Outer2.north) {$Q$};

  \begin{scope}[shift={(Outer2.center)},xshift=-1.5cm,
                yshift=-.45cm, transform shape,scale=.3]
    \wdboxUnits[minimum width=4cm]{BoxA}{\huge $P_1$}{2}{1,3}{0}
  \end{scope}

  \begin{scope}[shift={(Outer2.center)},xshift=1.5cm,
                yshift=.45cm,transform shape,scale=.3]
    \wdboxUnits[minimum width=4cm]{BoxB}{\huge $P_2$}{1,1}{1,2}{0}
  \end{scope}


  \tubeN{Outer2}{L}{2}{BoxA}{L}{1}{0}
  \draw[data wire] (Outer2-L-2-D1)
    to[out=0,in=180] (BoxA-L-1-D1);
  \draw[data wire] (Outer2-L-2-D1)
    to[out=0,in=180] (BoxA-L-1-D2);

	\tubeN{Outer2}{L}{1}{BoxB}{L}{1}{1}

  \tubeN{BoxA}{R}{2}{Outer2}{R}{2}{2}
  \draw[data wire] (BoxA-R-2-D2)
    to[out=0,in=180] (Outer2-R-2-D3);

  \tubeN{BoxA}{R}{1}{BoxB}{L}{2}{1}

  \loopTubeSmooth[below]{BoxB}{R}{2}{BoxA}{L}{1}{2}{BoxA}
\end{tikzpicture}
\]
using the fact that we're working in $\polystar$. That is, in the Kleisli map $Q^-+P_1^++P_2^+\to Q^++P_1^-+P_2^-+1$ we could have sent the $\yon$ component of $P_2^+$ to the (implicit) $1$.
\end{remark}

Composition in $\cat{W}$ corresponds to nesting of wiring diagrams. That is, for any $M,N:\nn$, objects $P_1,\ldots P_M$ and $Q_1,\ldots,Q_N$ and element $1\leq n\leq N$, the function
\begin{multline}\label[equation]{eqn.circle_i_comp}
	\circ_n\colon
	\cat{W}(Q_1,\ldots, Q_N;R)
	\times
	\cat{W}(P_{1},\ldots,P_{M};Q_n)
	\\\to
	\cat{W}(Q_1\ldots,Q_{n-1},P_1,\ldots,P_M,Q_{n+1},\ldots,Q_N;R)
\end{multline}
takes a wiring diagram of little boxes $P_1,\ldots,P_M$ inside mid-sized box $Q_n$, as well as a wiring diagram of mid-sized boxes $Q_1,\ldots,Q_N$ inside of large box $R$, and nests them to see the $Q$'s with $P_1,\ldots,P_M$ replacing $Q_n$, inside of large box $R$.
\begin{equation}\label[equation]{eqn.nesting_pic}
\begin{tikzpicture}[WD, baseline=(BoxA)]
  \begin{pgfonlayer}{background}
    \wdboxUnits[minimum width=10cm]{Outer2}{}{1,1}{0,3}{9}
  \end{pgfonlayer}

  \begin{scope}[shift={(Outer2.center)},xshift=-\innerOffset,
                yshift=-.4cm, transform shape,scale=\innerScale]
    \wdboxUnits[minimum width=4cm]{BoxA}{\Large $Q_1$}{2}{1,3}{0}
  \end{scope}

  \begin{scope}[shift={(Outer2.center)},xshift=\innerOffset,
                yshift=.9cm,transform shape,scale=\innerScale]
    \wdboxUnits[minimum width=6cm]{BoxB}{}{1,1}{1,2}{0}
  \end{scope}


  \tubeN{Outer2}{L}{2}{BoxA}{L}{1}{0}
  \draw[data wire] (Outer2-L-2-D1)
    to[out=0,in=180] (BoxA-L-1-D1);
  \draw[data wire] (Outer2-L-2-D1)
    to[out=0,in=180] (BoxA-L-1-D2);

	\tubeN{Outer2}{L}{1}{BoxB}{L}{1}{1}

  \tubeN{BoxA}{R}{2}{Outer2}{R}{2}{2}
  \draw[data wire] (BoxA-R-2-D2)
    to[out=0,in=180] (Outer2-R-2-D3);

  \tubeN{BoxA}{R}{1}{BoxB}{L}{2}{1}

	\tubeN{BoxB}{R}{1}{Outer2}{R}{1}{0}

  \loopTubeSmooth[below]{BoxB}{R}{2}{BoxA}{L}{1}{2}{BoxA}

\begin{scope}[shift={(BoxB.center)}, xshift=-.9cm, yshift=3mm, transform shape, scale=0.25]
  \wdboxUnits[minimum width=2cm]{Binner1}{\huge $P_1$}{1}{1}{0}
\end{scope}
\begin{scope}[shift={(BoxB.center)}, xshift=0cm, yshift=0mm, transform shape, scale=0.25]
  \wdboxUnits[minimum width=2cm]{Binner2}{\huge $P_2$}{1,1}{1}{0}
\end{scope}
\begin{scope}[shift={(BoxB.center)}, xshift=.8cm, yshift=0mm, transform shape, scale=0.25]
  \wdboxUnits[minimum width=2cm]{Binner3}{\huge $P_3$}{1}{1,1}{10}
\end{scope}


	\tubeN{BoxB}{L}{1}{Binner1}{L}{1}{1}
	\tubeN{BoxB}{L}{2}{Binner2}{L}{2}{1}
	\tubeN{Binner1}{R}{1}{Binner2}{L}{1}{1}
	\tubeN{Binner2}{R}{1}{Binner3}{L}{1}{1}
	\tubeN{Binner3}{R}{1}{BoxB}{R}{1}{1}
	\tubeN{Binner3}{R}{2}{BoxB}{R}{2}{1}
	\draw[data wire] (Binner3-R-2-D1)
    to[out=0,in=180] (BoxB-R-2-D2);

\end{tikzpicture}
\end{equation}
While the composition formula has already been completely specified abstractly in \eqref{eqn.operad_W}, we spell it out in case doing so is of use.

Suppose given maps of polynomials $\psi\colon r^-+q_1^++\cdots+q_N^+\to r^++q_1^-+\cdots+q_N^-$ and $\varphi\colon q_n^-+p_1^++\cdots+p_M^+\to q_n^++p_1^-+\cdots+p_M^+$. Writing $q_{\neq n}\coloneqq q_1+\cdots+q_{n-1}+q_{n+1}+\cdots+q_N$ and $p\coloneqq p_1+\cdots+p_M$, the composite map from \eqref{eqn.circle_i_comp} is given by the following formula
\begin{multline}\label[equation]{eqn.composition_circle_i}
  \psi\circ_n\varphi\coloneqq\\
  \Tr^{q_n^+}
  \big(r^-+q_{\neq n}^++q_n^++p^+\to r^++q_{\neq n}^-+q_n^-+p^+\to r^++q_{\neq n}^-+q_n^++p^-\big).
\end{multline}
The trace was defined for (multivariate) polynomials in \cref{cor.polystar_traced}.

\section{$\cat{W}$-algebras as structured categories}\label[section]{sec.w_alg_cats}

In this section, we explain how $\cat{W}$-algebras define structured categories. We first explain how any $C\colon\cat{W}\to\smset$ defines a category $\cat{C}$ with objects $\ob(\cat{C})\coloneqq\ob(\poly)$ and hom-sets
\[
\cat{C}(a,b)\coloneqq C(a,b),
\]
where $(a,b)\in\ob(\poly)\times\ob(\poly)\cong\ob(\cat{W})$.

To obtain the identity on $a$, we first consider the map $\iota_a\colon (0,0)\tickar(a,a)$ in $\Int(\polystar)$ given by $\id\colon a+0\to 0+a$. It determines a $0$-ary morphism, i.e.\ an element $\iota_p:\cat{W}(;(a,a))$, which is in turn sent by $C$ to a function $C(\iota_a)\colon 1\to C(a,a)$, and we choose it to stand as the identity element in $\cat{C}(a,a)$.

To obtain the composition formula, consider the map $\kappa_{a,b,c}\colon(a,b)+(b,c)\tickar (a,c)$ in $\Int(\polystar)$ given by the isomorphism $a+b+c\to c+a+b$. It determines an element $\kappa_{a,b,c}\colon\cat{W}((a,b),(b,c);(a,c))$, which is in turn sent to a function $C(\kappa_{a,b,c})\colon C(a,b)\times C(b,c)\to C(a,c)$, and we choose it to stand as the composition formula.

\begin{example}
Here are the wiring diagrams corresponding to the identity $C(\iota_a)\colon 1\to\cat{C}(a,a)$ and the composition formula $\cat{C}(a,b)\times\cat{C}(b,c)\to\cat{C}(a,c)$, for the arbitrary case of $a=\yon+\yon^2$, $b=\yon+1$ and $c=\yon^3$:
\[
\begin{tikzpicture}[WD, wdunit=7pt, halfunit=2pt, baseline=(Outer2)]
  \begin{pgfonlayer}{background}
    \wdboxUnits[minimum width=2cm]{Outer2}{}{1,2}{1,2}{2}
  \end{pgfonlayer}
  \node[below=0pt] at (Outer2.north) {};

  \tubeN{Outer2}{L}{1}{Outer2}{R}{1}{1}
  \tubeN{Outer2}{L}{2}{Outer2}{R}{2}{2}
\end{tikzpicture}
\hspace{.6in}
\begin{tikzpicture}[WD, wdunit=7pt, halfunit=2pt, baseline=(Outer2)]
  \begin{pgfonlayer}{background}
    \wdboxUnits[minimum width=7cm]{Outer2}{}{1,2}{3}{2}
  \end{pgfonlayer}
  \node[below=0pt] at (Outer2.north) {};

  \begin{scope}[shift={(Outer2.center)},xshift=-1.5cm,
                transform shape,scale=.6]
    \wdboxUnits[minimum width=4cm]{BoxA}{$\cat{C}(a,b)$}{1,2}{1,0}{7}
  \end{scope}

  \begin{scope}[shift={(Outer2.center)},xshift=1.5cm,
                transform shape,scale=.6]
    \wdboxUnits[minimum width=4cm]{BoxB}{$\cat{C}(b,c)$}{1,0}{3}{7}
  \end{scope}

  \tubeN{Outer2}{L}{1}{BoxA}{L}{1}{1}
  \tubeN{Outer2}{L}{2}{BoxA}{L}{2}{2}
  \tubeN{BoxA}{R}{1}{BoxB}{L}{1}{1}
  \tubeN{BoxA}{R}{2}{BoxB}{L}{2}{0}
  \tubeN{BoxB}{R}{1}{Outer2}{R}{1}{3}
\end{tikzpicture}
\qedhere
\]
\end{example}

We now recall a theorem saying that lax monoidal functors $\Int(\cat{T})\to\smset$ can be identified with bijective-on-objects traced functors $\cat{T}\to\cat{T}'$ to other traced categories $\cat{T}$.

Let $\int^{\cat{T}\in\Cat{TrCat}}\Int(\cat{T})\alg$ denote the Grothendieck construction of the functor $\Cat{TrCat}\to\smcat$ sending $\cat{T}$ to the category of lax monoidal functors $\Int(\cat{T})\to\smset$, i.e.\ $\Int(\cat{T})$-algebras. And let $\Cat{TrCat}^\tn{bo}$ denote the category of traced categories and bijective-on-objects traced functors between them.

\begin{theorem}[{\cite[Theorem B]{Spivak.Schultz.Rupel:2016a}}]\label[theorem]{thm.schultzB}
There is an equivalence of categories
\[
\int^{\cat{T}\in\Cat{TrCat}}\Int(\cat{T})\alg\cong\Cat{TrCat}^\tn{bo}.
\]
\end{theorem}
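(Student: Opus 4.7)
The plan is to exhibit functors in each direction and show they are mutually inverse equivalences. For the forward direction, given a pair $(\cat{T}, F\colon \Int(\cat{T}) \to \smset)$, I would construct a bijective-on-objects traced functor $G_F \colon \cat{T} \to \cat{T}_F$ following the recipe of \cref{sec.w_alg_cats}: set $\ob(\cat{T}_F) \coloneqq \ob(\cat{T})$ and $\cat{T}_F(a,b) \coloneqq F((a,b))$, with identity and composition defined using $F(\iota_a)$ and $F(\kappa_{a,b,c})$ together with the unit and laxator of $F$. The monoidal product on $\cat{T}_F$ comes from the identification $(a,b) \otimes (a',b') = (a \otimes a', b \otimes b')$ in $\Int(\cat{T})$ via the laxator, and the trace $\Tr^u_{a,b}$ is the $F$-image of the canonical morphism $(a \otimes u, b \otimes u) \tickar (a,b)$ in $\Int(\cat{T})$ realized by the middle swap $b \otimes a \otimes u \to a \otimes b \otimes u$ in $\cat{T}$. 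The functor $G_F$ is the identity on objects and sends $f\colon a \to b$ to the element of $F((a,b))$ obtained by applying $F$ to the morphism $\phi_f \in \Int(\cat{T})((I,I),(a,b)) \cong \cat{T}(a,b)$ naming $f$, and evaluating at the unit of $F$.

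For the reverse direction, given a bijective-on-objects traced functor $G\colon \cat{T} \to \cat{T}'$, I would use the 1-functoriality of $\Int$ on traced functors to produce $\Int(G)\colon \Int(\cat{T}) \to \Int(\cat{T}')$, and define $F_G \coloneqq \Int(\cat{T}')(\mathbf{I}, -) \circ \Int(G)$, where $\mathbf{I} \coloneqq (I,I)$ is the monoidal unit of $\Int(\cat{T}')$. This is lax monoidal because the representable at the unit of any compact closed category is lax monoidal via tensoring morphisms out of $\mathbf{I}$. Unwinding yields $F_G((a,b)) = \Int(\cat{T}')(\mathbf{I}, (G(a), G(b))) \cong \cat{T}'(a,b)$, matching the forward construction on hom-sets. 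Morphisms correspond analogously: a pair $(H, \alpha) \colon (\cat{T}, F) \to (\cat{S}, F')$ in the Grothendieck construction, consisting of a traced functor $H$ and a natural transformation $\alpha\colon F \to F' \circ \Int(H)$, translates via componentwise evaluation into a commuting square of bo traced functors relating $G_F$ and $G_{F'}$, and conversely.

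The main obstacle is verifying the traced axioms on $\cat{T}_F$ and checking that the two round-trips return to the start up to canonical isomorphism. Both reduce to the observation that every traced axiom for $\cat{T}_F$ is the $F$-image of a compact-closed coherence in $\Int(\cat{T})$: yanking becomes the snake equation, dinaturality becomes naturality of the cap in its internal parameter, superposing becomes compatibility of the tensor with cups and caps, and so on. Thus the functoriality and lax monoidality of $F$ supply the axioms for free. The forward-then-backward round trip reconstructs $F$ because a lax monoidal functor $\Int(\cat{T}) \to \smset$ is determined by its values on morphisms out of $\mathbf{I}$ together with its coherence data, and these values coincide with the hom-sets of $\cat{T}_F$ by construction; the backward-then-forward round trip reproduces $G$ directly by unwinding $F_G$. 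A standard pseudo-inverse verification then completes the equivalence.
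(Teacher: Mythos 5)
First, note that the paper does not prove \cref{thm.schultzB}: it is imported verbatim from the cited reference, so there is no in-paper argument to compare yours against. Your overall strategy---build $\cat{T}_F$ from $F$ by taking $\cat{T}_F(a,b)\coloneqq F((a,b))$ with identities, composition, tensor, and trace given by $F$ applied to canonical morphisms of $\Int(\cat{T})$, and conversely build $F_G\coloneqq\Int(\cat{T}')(\mathbf{I},-)\circ\Int(G)$ from a bijective-on-objects traced functor $G$---is the intended one, and the forward half is exactly the construction the paper itself sketches at the start of \cref{sec.w_alg_cats}. Two bookkeeping points: the equivalence really pairs $(\cat{T},F)$ with the \emph{arrow} $G_F\colon\cat{T}\to\cat{T}_F$ (this is what makes the fibrewise statement $\Int(\cat{T})\alg\simeq\Cat{TrCat}^{\tn{bo}}_{\cat{T}/}$ and your morphism-level correspondence come out right), and with the paper's convention $\Int(\cat{T})(A,B)=\cat{T}(B^-\otimes A^+,B^+\otimes A^-)$ your trace morphism should run $a\otimes b\otimes u\to b\otimes a\otimes u$; both are cosmetic.

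The genuine gap is that the crux of the theorem is hidden in your sentence ``a lax monoidal functor $\Int(\cat{T})\to\smset$ is determined by its values on morphisms out of $\mathbf{I}$ together with its coherence data.'' That is not an observation to be invoked; it is essentially the entire content of Theorem B. Concretely, to obtain the natural isomorphism $F\cong F_{G_F}$ you must show that for an \emph{arbitrary} morphism $\psi$ of $\Int(\cat{T})$ the value $F(\psi)$ agrees with what the reconstruction dictates, namely composition in $\Int(\cat{T}_F)$ with the element $F(\phi_\psi)(\ast)$. This requires (i) a decomposition of every morphism of $\Int(\cat{T})$ into names $\mathbf{I}\tickar(a,b)$, symmetries, cups and caps, and laxator data, and (ii) a coherence statement that $F$'s value is independent of the chosen decomposition. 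The same issue governs fullness: recovering a monoidal natural transformation that is natural with respect to \emph{all} morphisms of $\Int(\cat{T})$ from a traced functor $\cat{T}_F\to\cat{S}_G$ again needs every morphism to be generated by the pieces such a functor is known to preserve. In the cited paper this is precisely what the identification of such decompositions with oriented $1$-cobordisms buys. By contrast, the traced axioms on $\cat{T}_F$ really are ``for free'' from functoriality of $F$, naturality of the laxator, and identities among the canonical morphisms of the compact closed category $\Int(\cat{T})$, so that part of your plan is fine. The architecture is right, but the proposal presupposes the theorem's hardest ingredient rather than proving it.
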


Recall that the functor $\Cat{MonCatLax}\to\Cat{Operad}$ is fully faithful, so lax monoidal functors out of $\Int(\polystar)$ can be identified with $\cat{W}$-algebras.

\begin{corollary}\label[corollary]{cor.schultz_trcat}
The construction from the top of \cref{sec.w_alg_cats} extends to a functor $\cat{W}\alg\to\Cat{TrCat}$.
\end{corollary}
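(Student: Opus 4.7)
The plan is to reduce the statement to \cref{thm.schultzB} applied to the particular traced category $\cat{T}=\polystar$. Recall that the corollary's hypothesis---a $\cat{W}$-algebra---is, by the fully faithfulness of $\Cat{MonCatLax}\to\Cat{Operad}$ noted just before the statement, the same as a lax monoidal functor $C\colon\Int(\polystar)\to\smset$. Restricting the equivalence of \cref{thm.schultzB} to the fiber over $\polystar\in\Cat{TrCat}$, one obtains an equivalence between $\Int(\polystar)\alg$ and the subcategory of $\Cat{TrCat}^{\tn{bo}}$ whose objects are bijective-on-objects traced functors $\polystar\to\cat{T}_C$ and whose morphisms are commuting triangles under $\polystar$.

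First, I would unpack what Theorem B produces from a given $C$: a traced category $\cat{T}_C$ together with a bijective-on-objects traced functor $J_C\colon\polystar\to\cat{T}_C$. In particular $\ob(\cat{T}_C)=\ob(\polystar)=\ob(\poly)$, and inspection of the proof of Theorem B (as in \cite{Spivak.Schultz.Rupel:2016a}) shows that the hom-sets are exactly $\cat{T}_C(a,b)=C(a,b)$, with the identity and composition determined by applying $C$ to the distinguished elements $\iota_a\in\cat{W}(;(a,a))$ and $\kappa_{a,b,c}\in\cat{W}((a,b),(b,c);(a,c))$ used to construct the category $\cat{C}$ at the top of \cref{sec.w_alg_cats}. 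Thus $\cat{T}_C$ is, as an underlying category, exactly the category $\cat{C}$ of the construction, and it carries in addition a distinguished symmetric monoidal structure and trace coming from \cref{thm.schultzB}.

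Next, functoriality. A morphism $\alpha\colon C\to C'$ of $\cat{W}$-algebras is a monoidal natural transformation of the associated lax monoidal functors $\Int(\polystar)\to\smset$, which constitutes a morphism in $\int^{\cat{T}}\Int(\cat{T})\alg$ lying over $\id_{\polystar}$. Applying the equivalence of \cref{thm.schultzB}, such an $\alpha$ corresponds to a bijective-on-objects traced functor $F_\alpha\colon\cat{T}_C\to\cat{T}_{C'}$ making the evident triangle under $\polystar$ commute. On hom-sets $F_\alpha$ is the component $\alpha_{(a,b)}\colon C(a,b)\to C'(a,b)$, so in particular this assignment is functorial in $\alpha$. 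Composing with the forgetful $\Cat{TrCat}^{\tn{bo}}\to\Cat{TrCat}$ gives the desired functor $\cat{W}\alg\to\Cat{TrCat}$, sending $C\mapsto\cat{T}_C=\cat{C}$ and $\alpha\mapsto F_\alpha$.

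The main point requiring care---and the only nontrivial step---is the identification $\cat{T}_C\cong\cat{C}$ as bare categories, i.e.\ that the identity and composition produced by \cref{thm.schultzB} agree with those defined via $\iota_a$ and $\kappa_{a,b,c}$ in \cref{sec.w_alg_cats}. This reduces to checking that, under the correspondence of Theorem B, the generating $0$- and $2$-ary operations of the underlying operad of $\Int(\polystar)$ that produce identities and binary composites in the target traced category are precisely $\iota_a$ and $\kappa_{a,b,c}$; both are, by construction, the images under the $\Int$-construction of the unit $I=0\tickar(a,a)$ and composition $(a,b)+(b,c)\tickar(a,c)$ of $\polystar$-morphisms. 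Once this identification is made, all further structure (tensor, trace, and functoriality in $\alpha$) is inherited directly from \cref{thm.schultzB}, and no additional axiom checks are needed.
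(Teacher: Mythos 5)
Your proposal is correct and follows essentially the same route as the paper: both invoke \cref{thm.schultzB} to obtain the equivalence $\cat{W}\alg\cong\Cat{TrCat}^\tn{bo}_{\polystar/}$, forget the coslice data, and check that the resulting traced category agrees with the $C\mapsto\cat{C}$ construction via the distinguished operations $\iota_a$ and $\kappa_{a,b,c}$. You simply spell out the functoriality in $\alpha$ and the identification of hom-sets in more detail than the paper does.
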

\begin{proof}
From \cref{thm.schultzB} we have an equivalence of categories $\cat{W}\alg\cong\Cat{TrCat}^\tn{bo}_{\polystar/}$ and we can simply forget the coslice data. One can check from the proof that the construction agrees with the $C\mapsto\cat{C}$ construction from the top of this section.
%
\end{proof}

\begin{remark} Recall that promonads on $\cat{T}$ (that is, monoids in the category of profunctors from $\cat{T}$ to itself with the monoidal structure given by composition) can be identified with identity-on-objects functors out of $\cat{T}$. 

The symmetric monoidal structure on $\cat{T}$ induces symmetric monoidal structure on the category of promonads on $\cat{T}$, explicitly given by the coend
\[
  (\cat{P}_1 \boxtimes \cat{P}_2)(a,b) \coloneqq 
  \int^{a_1,a_2,b_1,b_2 \in \cat{T}} 
  \cat{T}(a,a_1 \otimes a_2) \times 
  \cat{T}(b_1 \otimes b_2, b) \times
  \cat{P}_1(a_1,b_1) \times 
  \cat{P}_2(a_2,b_2)
\]
for two promonads $\cat{P}_1, \cat{P}_2$ on $\cat{T}$. Commutative monoids in promonads with respect to this monoidal structure can be identified with symmetric monoidal identity-on-objects functors out of $\cat{T}$. 

Finally, one could define a \emph{complete Elgot promonad} to be a commutative promonad such that the corresponding identity-on-objects functor is traced (\cite{goncharov2016complete}). \cref{thm.schultzB} then identifies $\Int(\cat{T})$-algebras in sets with complete Elgot promonads on $\cat{T}$.
\end{remark}

Recall from \cite{fong2019supplying} that for $\cat{C}$ to have a \emph{supply} of commutative monoids means that its objects are coherently equipped with commutative monoid structures. 

\begin{proposition}
Suppose that $\cat{T}$ is cocartesian traced and that $C\colon\Int(\cat{T})\to\smset$ is lax monoidal. Then the traced category $\cat{C}$ given by \cref{cor.schultz_trcat} has a supply of commutative monoids.
\end{proposition}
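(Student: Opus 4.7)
The plan is to invoke \cref{thm.schultzB} to convert the $\Int(\cat{T})$-algebra $C$ into a bijective-on-objects traced functor $F\colon\cat{T}\to\cat{C}$, and then to transport along $F$ the canonical supply of commutative monoids that exists on any cocartesian category.

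First, I would recall that a supply of commutative monoids on an SMC $\cat{D}$ is equivalent, in the sense of \cite{fong2019supplying}, to the data of a strong symmetric monoidal functor $M_\cat{D}\colon\Cat{CMon}\to\cat{D}$ from the prop of commutative monoids. When $\cat{T}$ is cocartesian, such a functor $M_\cat{T}$ is obvious: it sends the walking commutative monoid to an arbitrary object $a$, the unit to $\iota_a\colon 0\to a$, and the multiplication to the codiagonal $\nabla_a\colon a+a\to a$; coherence and commutativity follow from the universal property of the coproduct.

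Next, by \cref{thm.schultzB} the datum of a lax monoidal $C\colon\Int(\cat{T})\to\smset$ is the same as the datum of a bijective-on-objects traced functor $F\colon\cat{T}\to\cat{C}$. By definition, a traced functor is strong symmetric monoidal (in addition to preserving trace). Therefore the composite
\[
\Cat{CMon}\xrightarrow{\;M_\cat{T}\;}\cat{T}\xrightarrow{\;F\;}\cat{C}
\]
is again strong symmetric monoidal, and I would take this composite as the desired supply $M_\cat{C}\coloneqq F\circ M_\cat{T}$ on $\cat{C}$. Explicitly, since $F(0)\cong I_\cat{C}$ and $F(a+a)\cong F(a)\otimes_\cat{C} F(a)$, the object $F(a)$ becomes a commutative monoid in $\cat{C}$ with unit $I_\cat{C}\cong F(0)\xrightarrow{F(\iota_a)}F(a)$ and multiplication $F(a)\otimes_\cat{C} F(a)\cong F(a+a)\xrightarrow{F(\nabla_a)}F(a)$. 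Bijectivity of $F$ on objects guarantees that every object of $\cat{C}$ acquires such a structure.

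There is no real obstacle here beyond unwinding definitions; the only thing to verify carefully is that the traced functor provided by \cref{thm.schultzB} is in particular strong (not merely lax) symmetric monoidal, which is part of the definition of a traced functor, and that supplies push forward along strong symmetric monoidal functors, which is immediate from the functorial reformulation of a supply.
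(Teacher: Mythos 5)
Your proposal is correct and takes essentially the same route as the paper's proof, which is a one-line compression of exactly this argument: the cocartesian structure equips every object of $\cat{T}$ with a commutative $(0,+)$-monoid structure forming a supply, and the bijective-on-objects traced (hence strong symmetric monoidal) functor $\cat{T}\to\cat{C}$ furnished by \cref{thm.schultzB} transports that supply to $\cat{C}$. One small wording caveat: a supply is not a single strong monoidal functor $\Cat{CMon}\to\cat{D}$ but a coherent family of them, one for each object of $\cat{D}$ --- which is in fact what you construct, since you index $M_{\cat{T}}$ by an arbitrary object $a$ and use bijectivity on objects to cover all of $\cat{C}$.
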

\begin{proof}
Every object in a cocartesian category is a $(0,+)$-monoid; in fact they form a supply of commutative monoids, and $C$ preserves that supply.
\end{proof}

We conclude this section by offering an example $\cat{W}$-algebra. Recall from \cref{cor.polystar_traced} that for any set $X:\smset$ there is a traced functor $-_X\colon\polystar\to\smsetstar$ given by sending $p\mapsto p(X)$. Since $\Int$ is functorial, we obtain a strong monoidal functor $\Int(-_X)\colon\Int(\polystar)\to\Int(\smsetstar)$. Now, for any monoidal category $(\cat{C},I,\otimes)$, the map $\cat{C}(I,-)\colon\cat{C}\to\smset$ is lax monoidal because $I$ is a $\otimes$-comonoid. Thus we obtain the composite
\[
\cat{W}=\Int(\polystar)\To{\Int(-_X)}\Int(\smsetstar)\To{\Int(\smsetstar)((0,0),-)}\smset.
\]
Unpacking, this sends any object $(p^-,p^+)\in\ob(\cat{W})$ to $\smsetstar(p^-(X),p^+(X))$, the set of partial functions $p^-(X)\parto p^+(X)$.

The above results say that for any $X:\ob\smset$, there is a traced monoidal category with a commutative monoid structure on each object $p:\ob\polystar$, in which a morphism $p\to q$ is a partial function $p(X)\parto q(X)$. One may enforce typing on the wires by using multivariate polynomials; see \cref{cor.polystar_traced}.

\section{Bypassing}\label[section]{sec.bypass}

In any cartesian monoidal category, if one is given a map $f_1\colon B\to C$ and $f_2\colon A\times C\to D$, one obtains a map $A\times B\to D$. In the corresponding string diagrams, this setup would be drawn as following (without the blue):
\begin{equation}\label{eqn.bypass}
\begin{tikzpicture}[WD, wdunit=7pt, halfunit=2pt, baseline=(BoxA)]
  \begin{pgfonlayer}{background}
    \wdboxUnits[minimum width=5cm]{Outer2}{}{2}{1}{4}
  \end{pgfonlayer}
  \node[below=0pt] at (Outer2.north) {$Q$};

  \begin{scope}[shift={(Outer2.center)},xshift=-1cm,
                yshift=-.35cm, transform shape,scale=.3]
    \wdboxUnits[minimum width=3cm]{BoxA}{\huge $P_1$}{1}{1}{0}
  \end{scope}

  \begin{scope}[shift={(Outer2.center)},xshift=1cm,
                yshift=.0cm,transform shape,scale=.3]
    \wdboxUnits[minimum width=3cm]{BoxB}{\huge $P_2$}{2}{1}{0}
  \end{scope}
  
  \tubeN{Outer2}{L}{1}{BoxA}{L}{1}{0}
  \tubeN{BoxA}{R}{1}{BoxB}{L}{1}{0}
  \tubeN{BoxB}{R}{1}{Outer2}{R}{1}{0}
	\draw[data wire] (Outer2-L-1-D2) to[out=0,in=180] node[below, font=\tiny] {$B$} (BoxA-L-1-D1);
	\draw[data wire] (Outer2-L-1-D1) to[out=0,in=180] node[above, font=\tiny] {$A$} (BoxB-L-1-D1);
	\draw[data wire] (BoxA-R-1-D1) to[out=0,in=180] node[below, font=\tiny] {$C$} (BoxB-L-1-D2);
	\draw[data wire] (BoxB-R-1-D1) to node[below, font=\tiny] {$D$} (Outer2-R-1-D1);
\end{tikzpicture}
\end{equation}
However, it is clear that---when we do include the blue control regions---this picture does not fit within the wiring diagram syntax given by $\cat{W}$, since the wire marked $A$ does not stay within the blue control region. To fix this, we introduce the notion of \emph{bypass}, which in turn comes from a $\Cat{Para}$ construction. We follow \cite[Definition 5.1.1]{capucci2024actegories} for some useful definitions. (The results in this section are technical and probably useful to anyone implementing this theory; see \eqref{eqn.w'} for the definition of $\cat{W}'$. These results will not be needed elsewhere, however, so the reader may choose to skip directly to \cref{chap.functoriality_and_int} for the double categorical $\IInt$-construction.)

\begin{definition}
A \emph{distributive algebroidal actegory} consists of monoidal categories $(\cat{M},I,\otimes)$ and $(\cat{C},o,\boxplus)$, a functor $\bullet\colon\cat{M}\times\cat{C}\to\cat{C}$, and natural isomorphisms
\begin{equation}\label[equation]{eqn.dist_algebroidal}
\begin{aligned}
\eta&\colon c\cong I\bullet c
&
\mu&\colon (m_1\otimes m_2)\bullet c\cong m_1\bullet(m_2\bullet c),\\
\gamma&\colon m\bullet o\cong o
&
\delta&\colon m\bullet (c_1\boxplus c_2)\cong (m\bullet c_1)\boxplus(m\bullet c_2),
\end{aligned}
\end{equation}
satisfying various coherence laws.

If $\cat{C}$ is traced, we say that the distributive algebroidal actegory is \emph{traced} if for all $m:\ob\cat{M}$ and $f\colon c\boxplus u\to d\boxplus u$ in $\cat{C}$, we have
\begin{equation}\label[equation]{eqn.traced_distributive}
\Tr^{m\bullet u}_{m\bullet c,m\bullet d}(m\bullet f)=
m\bullet\Tr^{u}_{c,d}(f).
\qedhere
\end{equation}
\end{definition}

\begin{proposition}\label[proposition]{prop.int_groupoid_dta}
If $\cat{M}$ is a monoidal groupoid and $\bullet\colon\cat{M}\times\cat{C}\to\cat{C}$ is a traced distributive algebroidal actegory then $\Int(\cat{C})$ is a distributive algebroidal $\cat{M}$-actegory.
\end{proposition}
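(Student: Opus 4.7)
The plan is to transport the actegory structure componentwise from $\cat{C}$ to $\Int(\cat{C})$, using that $\cat{M}$ is a groupoid to handle the mixed variance of $\Int$-hom-sets, and using the traced distributive axiom \eqref{eqn.traced_distributive} to secure bifunctoriality.

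On objects I set $m\bullet(c^-,c^+)\coloneqq(m\bullet c^-,m\bullet c^+)$. For $\mu\colon m\to m'$ in $\cat{M}$ and $f\colon(c^-,c^+)\tickar(d^-,d^+)$ in $\Int(\cat{C})$---that is, a $\cat{C}$-map $f\colon d^-\boxplus c^+\to d^+\boxplus c^-$---I define $\mu\bullet f$ by precomposing with $(\mu^{-1}\bullet d^-)\boxplus(m\bullet c^+)$, applying $\delta^{-1}$ to pull out $m\bullet$, applying $m\bullet f$, reapplying $\delta$, and postcomposing with $(\mu\bullet d^+)\boxplus(m\bullet c^-)$. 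The inversion $\mu^{-1}$ is forced because $d^-$ appears in the $\cat{C}$-domain while $d^+$ appears in the $\cat{C}$-codomain of $f$; this is the only place the groupoid hypothesis on $\cat{M}$ is used.

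The main obstacle is checking that $\bullet$ preserves composition in $\Int(\cat{C})$. By \eqref{eqn.int_composite}, composition there is defined via a trace, so the desired equality reduces to comparing $m\bullet\Tr^{b^-}(\cdots)$ with $\Tr^{m\bullet b^-}(m\bullet(\cdots))$ after using $\delta$ to commute $m\bullet$ past the $\boxplus$'s at the boundary of the trace. These two expressions coincide by the traced distributivity axiom \eqref{eqn.traced_distributive} combined with the naturality properties \eqref{eqn.tr_naturality} and \eqref{eqn.tr_dinaturality} of $\Tr$ and the naturality of $\delta$. This diagram chase is the one non-routine computation, and it is the sole place where the traced hypothesis on $\bullet$ is essential.

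With bifunctoriality in hand, the four structural isos are obtained via the general observation that any pair of $\cat{C}$-isos $\alpha\colon c^-\iso d^-$ and $\beta\colon c^+\iso d^+$ lifts to an $\Int(\cat{C})$-iso $(c^-,c^+)\tickar(d^-,d^+)$ built from $\alpha^{-1}\boxplus\beta$ and a braiding, and composition of such componentwise-iso-shaped morphisms in $\Int(\cat{C})$ agrees with componentwise composition (the traces appearing in \eqref{eqn.int_composite} collapse by the yanking axiom). Applying this construction to each iso in \eqref{eqn.dist_algebroidal} produces $\eta,\mu,\gamma,\delta$ on $\Int(\cat{C})$, with $o$ replaced by the $\Int(\cat{C})$-unit $(o,o)$. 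Each coherence law for a distributive algebroidal actegory in $\Int(\cat{C})$ then splits into its two component coherence laws in $\cat{C}$, which hold by hypothesis, completing the proof.
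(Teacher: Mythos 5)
Your proof is correct and follows essentially the same route as the paper: the action is defined pairwise on objects, the mixed variance of $\Int(\cat{C})$-homs is handled by inverting the $\cat{M}$-morphism on the contravariant component (which is where the groupoid hypothesis enters), and preservation of $\Int$-composition is reduced via $\delta$ to the traced distributivity axiom \eqref{eqn.traced_distributive}. The paper states all of this more tersely; your version simply spells out the details.
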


\begin{proof}
The action is given pairwise: on objects $m\bullet (c^-,c^+)\coloneqq (m\bullet c^-,m\bullet c^+)$, and given morphisms $m\cong n$ and $d^-\otimes c^+\to d^+\otimes c^-$, the two obvious ways to construct $(n\bullet d^-)\boxplus(m\bullet c^+)\to(n\bullet d^+)\boxplus(m\bullet c^-)$ agree, and it is clear that we again have natural isomorphisms as in \cref{eqn.dist_algebroidal}. Finally, \eqref{eqn.traced_distributive} ensures that the action respects composition in $\Int(\cat{C})$.
\end{proof}

\begin{definition}\label[definition]{def.para}
Let $\bullet\colon\cat{M}\times\cat{C}\to\cat{C}$ be a distributive algebroidal actegory. Define $\para_\bullet(\cat{C})$ to be the operad with object set $\ob\para_\bullet(\cat{C})\coloneqq\ob\cat{C}$ and
\[
\para_\bullet(\cat{C})(c_1,\ldots,c_N;d)\coloneqq\sum_{m_1,\ldots,m_N:\cat{M}}\cat{C}\big((m_1\bullet c_1)\boxplus \cdots\boxplus (m_N\bullet c_N), d\big)
\]
In other words, a map $c_1,\ldots,c_N\to d$ in $\para_\bullet(\cat{C})$ consists of $\cat{M}$-objects $m_1,\ldots, m_N$ and a map $(m_1\bullet c_1)\boxplus \cdots\boxplus (m_N\bullet c_N)\too d$. 

The identity chooses the identity object $I:\ob(\cat{M})$ and given maps
\[
(\ell_1\bullet b_1)\boxplus \cdots\boxplus (\ell_M\bullet b_M)\Too{\varphi} c_n
\qqand
(m_1\bullet c_1)\boxplus \cdots\boxplus (m_N\bullet c_N)\Too{\psi} d
\]
we define their composite $\varphi\circ_n\psi$ using linearity \eqref{eqn.dist_algebroidal}:
\begin{multline*}
(m_1\bullet c_1)\boxplus \cdots\boxplus (m_{n-1}\bullet c_{n-1})
\boxplus((m_n\otimes\ell_1)\bullet b_1)\boxplus \cdots\boxplus ((m_n\otimes\ell_M)\bullet b_M)\\\boxplus(m_{n+1}\bullet c_{n+1})\boxplus\cdots\boxplus(m_N\bullet c_N)\too(m_1\bullet c_1)\boxplus \cdots\boxplus (m_N\bullet c_N)\to d.
\qedhere
\end{multline*}
\end{definition}

\begin{proposition}\label[proposition]{prop.setstar_dta}
The monoidal category $(\smsetstar,0,+)$ is a traced distributive algebroidal actegory  with respect to the monoidal category $(\finset,1,\times)$.
\end{proposition}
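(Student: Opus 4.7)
The plan is to take the action $\bullet\colon \finset\times\smsetstar\to\smsetstar$ to be the copower $M\bullet c \coloneqq M\times c$, defined on morphisms by sending a function $h\colon M\to N$ and a partial function $f\colon c\parto d$ to the partial function $(m,x)\mapsto (h(m), f(x))$, which is defined exactly where $f$ is. First I would verify the four natural isomorphisms $\eta,\mu,\gamma,\delta$ of \eqref{eqn.dist_algebroidal}: each is a standard distributivity isomorphism in $\smset$, whose existence and coherence follow from lextensivity (an instance of \cref{prop.complemented_points}). Viewing these as total, hence partial, maps in $\smsetstar$ along the canonical inclusion $\smset\to\smsetstar$, the required coherence laws reduce to identities of total functions and are inherited directly from $\smset$.

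The substantive part is the trace condition \eqref{eqn.traced_distributive}. My approach is to use the concrete ``while-loop'' description of the trace in $\smsetstar$ recalled in \cref{ex.setstar_traced}. Given $f\colon c+u\parto d+u$, the partial function $M\bullet f\colon (M\times c)+(M\times u)\parto(M\times d)+(M\times u)$ acts diagonally in $M$: it preserves the first coordinate and applies $f$ in the second. Hence the iterative trace procedure for $M\bullet f$ starting at an input $(m,x)$ amounts to running the procedure for $f$ starting at $x$ while carrying $m$ along unchanged. The two procedures terminate simultaneously, and when they do the output of the former is $(m, \Tr^u_{c,d}(f)(x))$, which is exactly $(M\bullet\Tr^u_{c,d}(f))(m,x)$, giving the required equality of partial functions.

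A more conceptual alternative would be to observe that $M\times(-)\colon\smset\to\smset$, being a left adjoint, preserves terminal coalgebras of polynomial endofunctors. Combined with the identification $M\times((\nn\times d)+1)\to(\nn\times Md)+1$ that folds the $M$ copies of the ``undefined'' state to a single one, this shows that $M\bullet(-)$ sends the coalgebra map computing $\iter^u_d(f)$ to the one computing $\iter^{Mu}_{Md}(M\bullet f)$, yielding the trace identity. I do not expect either approach to encounter a serious obstacle, since uniformity of $\smsetstar$ (\cref{prop.setstar_traced}) rigidly pins down the trace from any compatible coalgebra map; the only mild technical point is the bookkeeping between $M\times(A+1)$ and $M\times A+1$ in the coalgebraic version, which the first approach avoids entirely.
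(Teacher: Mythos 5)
Your main argument is correct, but it takes a different route from the paper. The paper's proof is axiomatic: it observes that for finite $M$ the map $M\bullet f$ decomposes as the $M$-fold coproduct $f+\cdots+f$, and then computes $\Tr^{M\times U}$ by peeling off one copy of $U$ at a time using monoidality in $U$ \eqref{eqn.monoidal_in_U} (together, implicitly, with superposing and naturality to see that tracing the $m$th copy of $U$ only touches the $m$th summand $f$). That argument never opens up the while-loop and would work verbatim in any cocartesian traced category with $\finset$ acting by finite copowers. Your first argument instead unwinds the concrete iterative description of the trace in $\smsetstar$ from \cref{ex.setstar_traced} and checks that the orbit of $(m,x)$ under $M\bullet f$ is $m$ paired with the orbit of $x$ under $f$; this is a perfectly valid, more hands-on verification, at the cost of being specific to $\smsetstar$ (which is all the proposition asks for). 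Your treatment of the structure isomorphisms and coherences is at the same level of detail as the paper's.

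One caution about your ``more conceptual alternative'': the claim that $M\times(-)$, \emph{being a left adjoint}, preserves terminal coalgebras is backwards --- left adjoints preserve initial algebras, while terminal coalgebras are limits of the terminal sequence. Moreover $M\times\bigl((\nn\times d)+1\bigr)\cong(\nn\times M\times d)+M$ is not the terminal coalgebra of $\yon+Md$; your ``fold the $M$ undefined states'' map is a genuine (non-invertible) coalgebra morphism, and what actually closes the argument is terminality of $(\nn\times Md)+1$ (equivalently, the uniformity you cite), not any preservation property of $M\times(-)$. Since this sketch is only offered as an alternative and your primary argument stands on its own, this does not affect the correctness of the proposal.
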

\begin{proof}
Define $M\bullet C\coloneqq M\times C$ and similarly for morphisms. We have the natural isomorphisms of \eqref{eqn.dist_algebroidal} satisfying the coherences. Now given a finite set $M$ and a map $f\colon A+U\to B+U$, the map $M\bullet f$ can be constructed as the sum $f+\cdots+f$ (with $M$-many summands). By monoidality in $U$ \eqref{eqn.monoidal_in_U}, we have
\[
\Tr^{M\times U}_{M\times A,M\times B}(M\times f)=M\times\Tr^{U}_{A,B}(f),
\]
as desired.
\end{proof}

Let $\finpoly_\star\ss\polystar$ denote the full subcategory spanned by the finite polynomials $p$, i.e.\ those for which $p(X)$ is finite whenever $X:\finset$ is.

\begin{corollary}\label[corollary]{cor.polystar_dta}
The monoidal category $(\polystar,0,+)$ is a traced distributive algebroidal actegory  with respect to both the monoidal category $(\finpoly,1,\times)$.
\end{corollary}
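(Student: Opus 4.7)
The plan is to transfer the structure pointwise through the fully faithful strong monoidal embedding $\polystar \to \Cat{Fun}(\smset,\smsetstar)$ of \cref{lemma.ff_strong_polystar_setsetstar}, in parallel with how \cref{cor.polystar_traced} transferred the trace.

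First, I would define the action $\bullet\colon \finpoly\times\polystar\to\polystar$ on objects by $p\bullet q\coloneqq p\times q$ (the polynomial product, which lives in $\polystar$ since $\polystar$ and $\poly$ share objects), and on morphisms $f\colon q_1\to q_2$ in $\polystar$, viewed as $f\colon q_1\to q_2+1$ in $\poly$, by taking $p\times f\colon p\times q_1\to p\times q_2 + p$ and post-composing with the unique map $p\to 1$ to land in $p\times q_2+1$. After pulling back along $\Exp{\cat{L}}$, this pointwise recovers the $\smsetstar$-action from \cref{prop.setstar_dta}: at each $X:\smset$, we have $(p\bullet q)(X)=p(X)\times q(X)$, namely $p(X)$-many parallel copies of $q(X)$ in $\smsetstar$.

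Second, the natural isomorphisms $\eta,\mu,\gamma,\delta$ of \eqref{eqn.dist_algebroidal} all arise from the distributive category structure on $\poly$: we have $1\times q\cong q$, $(p_1\times p_2)\times q\cong p_1\times(p_2\times q)$, $p\times 0\cong 0$, and $p\times(q_1+q_2)\cong(p\times q_1)+(p\times q_2)$. These are natural and satisfy the required coherences because $\poly$ (hence $\smset[\cat{L}]$ more generally) is a distributive monoidal category, and the passage to $\polystar$ by adjoining a disjoint point respects both the product and the coproduct.

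Third, for the trace condition \eqref{eqn.traced_distributive}, I reduce to the pointwise statement. Since the trace on $\polystar$ was defined (in \cref{cor.polystar_traced}) by transport along the fully faithful strong monoidal embedding into $\Cat{Fun}(\smset,\smsetstar)$, where the traced structure is pointwise (\cref{prop.uniform_traced_exponent}), and the action $\bullet$ is also pointwise, it suffices to check at each $X:\smset$ that
\[
\Tr^{m(X)\times u(X)}_{m(X)\times c(X),\,m(X)\times d(X)}(m(X)\times f_X) \;=\; m(X)\times \Tr^{u(X)}_{c(X),d(X)}(f_X)
\]
in $\smsetstar$. The main obstacle is that $m(X)$ need not be finite when $X$ is infinite, so \cref{prop.setstar_dta} does not apply verbatim. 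However, the identity holds for arbitrary sets $M$, not just finite ones: unfolding the iteration description of the trace in $\smsetstar$ from \cref{ex.setstar_traced}, the iteration of $M\times f_X$ on an input $(r,a)\in M\times c(X)+1$ stays within the $r$-indexed component and thus reduces to the iteration of $f_X$ on $a$. Equivalently, this follows because the terminal coalgebra of $\yon+(M\times B)$ is $(\nn\times M\times B)+1\cong M\times((\nn\times B)+1)+1$, after again collapsing via the pointwise basepoint. This closes the verification.
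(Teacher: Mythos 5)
Your proof is correct and follows essentially the same route as the paper: define the action by $m\bullet p\coloneqq m\times p$ and verify \eqref{eqn.traced_distributive} pointwise at each $X:\smset$ through the embedding $\polystar\to\Cat{Fun}(\smset,\smsetstar)$, reducing to the identity $\Tr^{M\times U}(M\times f)=M\times\Tr^U(f)$ in $\smsetstar$. Your extra observation---that $m(X)$ may be infinite for infinite $X$, so \cref{prop.setstar_dta} does not apply verbatim and one must instead check the identity for arbitrary $M$ directly via the iteration/terminal-coalgebra description---is a point the paper's own proof silently elides, and your patch for it is valid.
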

\begin{proof}
Again, we define $m\bullet p\coloneqq m\times p$. The traced structure on $\polystar$ is inherited (see \cref{cor.polystar_traced}) from the fully faithful functor $\polystar\to\Cat{Fun}(\smset,\smsetstar)$, so that for any $m:\finpoly$, $\alpha\colon p+u\to q+u$, and $C:\smset$ we have
\begin{align*}
\left(\Tr^{m\times u}_{m\times p,m\times q}(m\times\alpha)\right)(C)&\coloneqq
\Tr^{(m\times u)(C)}_{(m\times p)(C),(m\times q)(C)}(m(C)\times\alpha_C)\\&=
\Tr^{m(C)\times u(C)}_{m(C)\times p(C),m(C)\times q(C)}(m(C)\times\alpha_C)\\&=
m(C)\times\Tr^{u(C)}_{p(C),q(C)}(\alpha_C)\\&=
\left(m\times\Tr^{u}_{p,q}(\alpha)\right)(C),
\end{align*}
as desired.
\end{proof}

\begin{corollary}
We have distributive algebroidal actegories
\[
  (\times)\colon\finset^\cong\times\Int(\smsetstar)\to\Int(\smsetstar)
  \qqand
  (\times)\colon\finpoly^\cong\times\Int(\polystar)\to\Int(\polystar).
\]
\end{corollary}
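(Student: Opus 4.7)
The plan is to invoke \cref{prop.int_groupoid_dta} in each case, with $\cat{C} = \smsetstar$ (resp.\ $\cat{C} = \polystar$) and with $\cat{M} = \finset^\cong$ (resp.\ $\cat{M} = \finpoly^\cong$), so only two pieces of work remain: (i) verifying that the cores are genuine monoidal groupoids, and (ii) checking that restricting the actions of \cref{prop.setstar_dta} and \cref{cor.polystar_dta} along the inclusions $\finset^\cong\hookrightarrow\finset$ and $\finpoly^\cong\hookrightarrow\finpoly$ preserves the traced distributive algebroidal structure.

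First I would observe that for any (symmetric) monoidal category $(\cat{M},I,\otimes)$, its wide subcategory of isomorphisms $\cat{M}^\cong$ inherits a (symmetric) monoidal structure, because $I$ is an object and $\otimes$ sends pairs of isomorphisms to isomorphisms; moreover $\cat{M}^\cong$ is a groupoid by construction. Applying this to $\finset$ and $\finpoly$ yields the monoidal groupoids needed as inputs to \cref{prop.int_groupoid_dta}.

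Second, I would note that the restriction of a traced distributive algebroidal actegory $\bullet\colon\cat{M}\times\cat{C}\to\cat{C}$ along a monoidal functor $\cat{M}'\to\cat{M}$ is again a traced distributive algebroidal actegory: the natural isomorphisms $\eta,\mu,\gamma,\delta$ of \eqref{eqn.dist_algebroidal} restrict pointwise, the coherence laws are inherited, and the trace-compatibility equation \eqref{eqn.traced_distributive} continues to hold because it is stated pointwise in $m$. Combining this with \cref{prop.setstar_dta} and \cref{cor.polystar_dta}, the restrictions $(\times)\colon\finset^\cong\times\smsetstar\to\smsetstar$ and $(\times)\colon\finpoly^\cong\times\polystar\to\polystar$ are traced distributive algebroidal actegories of a monoidal groupoid on a traced category.

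Finally, applying \cref{prop.int_groupoid_dta} to each of these two data produces the distributive algebroidal $\finset^\cong$-actegory on $\Int(\smsetstar)$ and the distributive algebroidal $\finpoly^\cong$-actegory on $\Int(\polystar)$, exactly as claimed. There is no real obstacle here; the only thing worth double-checking is that the passage to the groupoid core is genuinely needed (it is, since \cref{prop.int_groupoid_dta} requires $\cat{M}$ to be a groupoid in order for the action on $\Int$-morphisms to be well-defined on both $c^-$ and $c^+$ coordinates), which explains the notational switch from $\finset$ and $\finpoly$ to their cores in the statement.
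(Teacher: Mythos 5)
Your proposal is correct and follows the paper's own argument essentially verbatim: the paper likewise combines \cref{prop.setstar_dta}, \cref{cor.polystar_dta}, and \cref{prop.int_groupoid_dta} with the observation that a distributive algebroidal actegory restricts along any monoidal subcategory inclusion $\cat{M}'\ss\cat{M}$. Your added remarks on why the groupoid core is required are accurate but do not change the route.
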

\begin{proof}
This follows from \cref{prop.setstar_dta,cor.polystar_dta,prop.int_groupoid_dta} and the fact that if $\cat{M}\times\cat{C}\to\cat{C}$ is a distributive algebroidal actegory, then so is the induced functor $\cat{M}'\times\cat{C}\to\cat{C}$ for any monoidal subcategory $\cat{M}'\ss\cat{M}$.
\end{proof}

Thus by \cref{def.para} we have an operad
\begin{equation}\label[equation]{eqn.w'}
	\cat{W}'\coloneqq\para_\times(\Int(\polystar)),
\end{equation} 
whose objects are pairs $P=(p^+,p^-)$ of polynomials and whose morphisms $P_1,\ldots,P_N\to Q$ are maps $m_1P_1,\ldots,m_NP_n\to Q$ in $\cat{W}$, i.e.\ morphisms $q^-+m_1p_1^++\cdots+m_Np_N^+\to q^++m_1p_1^-+\cdots+m_Np_N^-$ in $\polystar$ for some choice of polynomials $m_1,\ldots,m_N:\poly$. We refer to each $m_i$ as the $i$th \emph{bypass polynomial}.

In general, a bypass polynomial associated to a box $p$ will be of the form $\sum_{j:J}\yon^{A_j}$. The set $J$ represents the different options $j:J$ for what data ($A_j$) is stored, based on the control exit from which box $p$ is entered. This choice will be available upon exit from box $p$. 

We can now make sense of the diagram from \eqref{eqn.bypass}, where $P_1=(\yon,\yon)$, $P_2=(\yon^2,\yon)$, and $Q=(\yon^2,\yon)$, though it is better to redraw it within the current framework: the passing wire $B$ is now held in storage (shown gray) within $P_1$: 
\[
\begin{tikzpicture}[WD, wdunit=7pt, halfunit=2pt, baseline=(BoxA)]
  \begin{pgfonlayer}{background}
    \wdboxUnits[minimum width=5cm]{Outer2}{}{2}{1}{4}
  \end{pgfonlayer}
  \node[below=0pt] at (Outer2.north) {$Q$};

  \begin{scope}[shift={(Outer2.center)},xshift=-1cm,
                yshift=0cm, transform shape,scale=.5]
    \wdboxUnits[minimum width=3cm]{BoxA}{\raisebox{-1cm}{$P_1$}}{2}{2}{0}
  \end{scope}

  \begin{scope}[shift={(Outer2.center)},xshift=1cm,
                yshift=.0cm,transform shape,scale=.5]
    \wdboxUnits[minimum width=3cm]{BoxB}{\raisebox{-1cm}{$P_2$}}{2}{1}{0}
  \end{scope}
  
  \tubeN{Outer2}{L}{1}{BoxA}{L}{1}{2}
  \tubeN{BoxA}{R}{1}{BoxB}{L}{1}{2}
  \tubeN{BoxB}{R}{1}{Outer2}{R}{1}{1}
	\draw[bypass wire, bypass tube width=4pt] (BoxA-L-1-D1) to[out=0,in=180] (BoxA-R-1-D1);
 
\end{tikzpicture}
\]
The diagram is given by a ``para''  map $P_1,P_2\to Q$ in $\cat{W}'$, i.e.\ a map $(\yon\times P_1),P_2\to Q$ in $\cat{W}$. Unpacking, this is the morphism of type
\[
	\{Q\}\yon^2+\{P_1\}\yon\times\yon+\{P_2\}\yon\to\{Q\}\yon+\{P_1\}\yon\times\yon+\{P_2\}\yon^2
\]
given by isomorphisms $\{Q\}\yon^2\mapsto\{P_1\}\yon\times\yon$, $\{P_1\}\yon\times\yon\mapsto\{P_2\}\yon^2$, and $\{P_2\}\yon\mapsto \{Q\}\yon$.

\begin{example}[Factorial]\label[example]{ex.factorial}
The factorial function requires several instances of bypassing; it can be drawn as follows:
\[
\Phi=\begin{tikzpicture}[WD, baseline=(If.north)]
	  \begin{pgfonlayer}{background}
    \wdboxUnits[minimum width=9cm]{Outer}{\raisebox{3cm}{Fac}}{1}{1}{9}
  \end{pgfonlayer}

  \begin{scope}[shift={(Outer.center)},xshift=-3.5cm,
                yshift=-.5cm, transform shape,scale=\innerScale]
    \wdboxUnits[minimum width=2cm]{One}{\Large \raisebox{1cm}{One}}{1}{2}{0}
  \end{scope}

  \begin{scope}[shift={(Outer.center)},xshift=-1.5cm,
                yshift=-.5cm, transform shape,scale=\innerScale]
    \wdboxUnits[minimum width=2cm]{If}{\Large \raisebox{-2cm}{If}}{2}{2,1}{0}
  \end{scope}

  \begin{scope}[shift={(Outer.center)},xshift=.75cm,
                yshift=.75cm, transform shape,scale=\innerScale]
    \wdboxUnits[minimum width=2cm]{Mul}{\Large  \raisebox{1cm}{Mul}}{3}{2}{0}
  \end{scope}

  \begin{scope}[shift={(Outer.center)},xshift=2.5cm,
                yshift=.75cm, transform shape,scale=\innerScale]
    \wdboxUnits[minimum width=2cm]{Dec}{\Large  \raisebox{1.2cm}{Dec}}{2}{2}{0}
  \end{scope}

  \tubeN{Outer}{L}{1}{One}{L}{1}{1}
 	\draw[bypass wire, bypass tube width=4pt] (One-L-1-D1) to[out=0,in=180] (One-R-1-D2);
	\tubeN{One}{R}{1}{If}{L}{1}{2}
 	\draw[bypass wire, bypass tube width=4pt] (If-L-1-D1) to[out=0,in=180] (If-R-1-D1);
 	\draw[bypass wire, bypass tube width=4pt] (If-L-1-D1) to[out=0,in=180] (If-R-2-D1);
	\tubeN{If}{R}{2}{Outer}{R}{1}{1}
	\tubeN{If}{R}{1}{Mul}{L}{1}{2}
	\draw[data wire] (If-R-1-D2) to[out=0,in=180] (Mul-L-1-D3);
 	\draw[bypass wire, bypass tube width=4pt] (Mul-L-1-D3) to[out=0,in=180] (Mul-R-1-D2);
	\tubeN{Mul}{R}{1}{Dec}{L}{1}{2}
 	\draw[bypass wire, bypass tube width=4pt] (Dec-L-1-D1) to[out=0,in=180] (Dec-R-1-D1);
  \loopTubeSmooth[below]{Dec}{R}{1}{If}{L}{1}{2}{If}
	\begin{scope}[font=\tiny]
		\node[left=2pt of One-L-1-D1] {$N$};
		\node[above right=-1pt and 0pt of One-R-1-D1] {$T$};
		\node[below right=-1pt and 0pt of One-R-1-D2] {$N$};
		\node[above right=-1pt and 0pt of If-R-1-D1] {$T$};
		\node[below right=-1pt and 0pt of If-R-1-D2] {$N$};
		\node[above right=-1pt and 0pt of Mul-R-1-D1] {$N$};
		\node[below right=-1pt and 0pt of Mul-R-1-D2] {$N$};
		\node[above right=-1pt and 0pt of Dec-R-1-D1] {$N$};
		\node[below right=-1pt and 0pt of Dec-R-1-D2] {$N$};
		\node[below right=-1pt and 0pt of Outer-R-1-D1] {$\cdots T$};
	\end{scope}
\end{tikzpicture}
\]
It represents a map $\Phi\colon(\text{One},\text{If},\text{Mul},\text{Dec})\to\text{Fac}$ in $\cat{W}'$, where
\[
	\text{One}\coloneqq(1,\yon),\quad
	\text{If}\coloneqq(\yon,\yon+1),\quad
	\text{Mul}\coloneqq(\yon^2,\yon),\quad
	\text{Dec}\coloneqq(\yon,\yon),\quad
	\text{Fac}\coloneqq(\yon,\yon).	
\]
The bypass polynomial for each of the internal boxes is $m_1=m_2=m_3=m_4=\yon$ because each box carries one variable's worth of storage. So our factorial diagram $\Phi$ represents a map $(\yon\text{If},\yon\text{One},\yon\text{Dec},\yon\text{Mul})\tickar\text{Fac}$ in $\cat{W}=\Int(\polystar)$.

The program says to read $N$ and store it while outputting $T\coloneqq 1$, to serve as the ``total''. Then store $T$ and if $N\leq 1$, output $T$. If $N>1$ then send $N$ and $T$ off to be multiplied while $N$ is stored. The resulting new total is stored while the $N$ is decremented; repeat.
\end{example}


\chapter{Double categorical $\IInt$ construction and applications}\label[section]{chap.functoriality_and_int}

In \cref{sec.double_cat_Int} we show that for any uniform traced category $\cat{U}$ (see \cref{def.strict_uniform}), the free compact category $\Int(\cat{U})$ forms the loose part of a thin double category $\IInt(\cat{U})$, whose tight cells are just pairs of maps in $\cat{U}$. We also show that $\IInt(\cat{U})$ is compact in the sense of \cite{patterson2024toward}. We also discuss a very simple orthogonal factorization system on the tight maps, which will be useful in the final section, \cref{sec.trajectories}, where we show an application of the double category: it lets us model control-flow \emph{trajectories} through a wiring diagram. 

\section{Double category structure on $\IInt(\cat{U})$}\label[section]{sec.double_cat_Int}

\begin{theorem}\label[theorem]{thm.IInt}
Let $(\cat{U},I,\otimes,\Tr)$ be a uniform traced category. There is a thin symmetric monoidal double category $(\IInt(\cat{U}),(I,I),(\otimes,\otimes))$, for which the category of tight maps is $\cat{U}\times\cat{U}$, for which loose maps are given by the $\Int$ construction \eqref{eqn.int_construction}, and for which a cells exists iff the obvious diagram commutes. Explicitly, 
\begin{description}[labelindent=1.5em]
	\item[Object:] a pair of objects $(A^-,A^+)\in\ob(\cat{U})^2$.
	\item[Monoidal:] unit is $(I,I)$ and product is $(A^-,A^+)\otimes(B^-,B^+)\coloneqq(A^-\otimes B^-,A^+\otimes B^+)$.
	\item[Tight map:] a pair of maps $s^-\colon A_1^-\to A_2^-$ and $s^+\colon A_1^+\to A_2^+$ in $\cat{U}\times\cat{U}$.
	\item[Loose map:] a map $f\colon B^-\otimes A^+\to B^+\otimes A^-$ in $\cat{U}$.
	\item[Cell:]
	\begin{equation}\label[equation]{eqn.cell}
	\begin{tikzcd}
		(A_1^-,A_1^+)\ar[r, tick, "f_1"]\ar[d, "{(s^-,s^+)}"']&(B_1^-,B_1^+)\ar[d, "{(t^-, t^+)}"]\\
		(A_2^-,A_2^+)\ar[r, tick, "f_2"']&(B_2^-,B_2^+)\ar[ul, phantom, "\Downarrow"]
	\end{tikzcd}
	\qqmeans
	\begin{tikzcd}
		B_1^-\otimes A_1^+\ar[r, "f_1"]\ar[d, "t^-\otimes s^+"']&B_1^+\otimes A_1^-\ar[d, "t^+\otimes s^-"]\\
		B_2^-\otimes A_2^+\ar[r, "f_2"']&B_2^+\otimes A_2^-\ar[ul, phantom, "\tiny\textit{commutes}"]
  \end{tikzcd}
	\end{equation}
\end{description}
\end{theorem}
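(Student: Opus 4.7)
The plan is to verify each piece of double-category structure, handling most axioms by routine trace manipulations and reserving uniformity for the one step where it is essential: horizontal composition of cells. First I would record the data: objects are as stated, the tight category is $\cat{U}\times\cat{U}$ with pairwise composition, and the loose category is $\Int(\cat{U})$, already known to be a category by Joyal-Street-Verity. Because a cell is defined by the commuting-square condition in \eqref{eqn.cell}, each boundary admits at most one cell, which makes the double category thin; associativity and unitality of cell compositions then reduce to the analogous statements on boundaries, already verified for the two constituent categories.

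Vertical composition of cells is immediate, since pasting two commuting squares along a shared edge yields a commuting square, and identity tight maps give commuting squares trivially. The heart of the proof is horizontal composition. Given cells for $f_1,f_2$ with tight maps $s,t$ and for $g_1,g_2$ with tight maps $t,u$, the horizontal composites $h_i \coloneqq g_i\circ f_i$ are defined by the trace formula \eqref{eqn.int_composite} as $h_i = \Tr^{B_i^-}(k_i)$, where $k_i\colon C_i^-\otimes B_i^-\otimes A_i^+ \to C_i^+\otimes B_i^-\otimes A_i^-$ is built from $f_i$ and $g_i$ as $(C_i^-\otimes f_i)\then(g_i\otimes A_i^-)$. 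Pasting the two cell squares (tensored with suitable identities) yields a commuting square for $k_1,k_2$ and the tight map $t^-\colon B_1^-\to B_2^-$ that is exactly the hypothesis of uniformity \eqref{eqn.uniformity}. Since $\cat{U}$ is uniform, $t^-$ is strict, so combining uniformity with naturality of trace \eqref{eqn.tr_naturality} yields
\[
(u^-\otimes s^+)\then h_2 = h_1\then(u^+\otimes s^-),
\]
which is precisely the cell condition for the horizontal composite.

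For the symmetric monoidal structure, the unit $(I,I)$ and pairwise tensor on objects, tight maps, and loose maps will be verified to be functorial; the only nontrivial check is functoriality of $\otimes$ on loose-map composition, which follows from the superposing axiom of \cref{def.traced}, while the braiding descends from that of $\cat{U}$. Tensor-compatibility of cells holds because tensoring two commuting squares yields a commuting square. The principal obstacle is the horizontal-composition step: transferring a trace over $B_1^-$ to a trace over $B_2^-$ cannot be done by naturality alone because the objects differ, and it is precisely the role of uniformity to permit this passage via the strict map $t^-$. Everywhere else, the verification is standard trace manipulation.
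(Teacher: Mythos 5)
Your proposal is correct and follows essentially the same route as the paper: vertical composition is the trivial pasting of commuting squares, and horizontal composition is handled by forming $k_i=(C_i^-\otimes f_i)\then(g_i\otimes A_i^-)$, observing that the pasted cell squares give the uniformity hypothesis for the strict map $t^-\colon B_1^-\to B_2^-$, and combining uniformity \eqref{eqn.uniformity} with naturality \eqref{eqn.tr_naturality} to transfer the trace from $B_1^-$ to $B_2^-$. The only cosmetic difference is that you make explicit (via the superposing axiom) the check of monoidality that the paper dismisses as straightforward.
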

\begin{proof}
To show that we have a double category, it is clear that cells compose vertically,%
\footnote{
One may be tempted to define $\IInt(\cat{T})$ for any traced category by letting the tight maps be pairs of strict maps in $\cat{T}$, however these do not necessarily compose; see \cref{warn.cant_compose_strict}.
}
so it remains to show that they compose horizontally. Suppose given the cell as \eqref{eqn.cell} and also
\[
	\begin{tikzcd}
		(B_1^-,B_1^+)\ar[r, tick, "g_1"]\ar[d, "{(t^-,t^+)}"']&(C_1^-,C_1^+)\ar[d, "{(u^-, u^+)}"]\\
		(B_2^-,B_2^+)\ar[r, tick, "g_2"']&(C_2^-,C_2^+)\ar[ul, phantom, "\Downarrow"]
	\end{tikzcd}
	\qqie
	\begin{tikzcd}
		C_1^-\otimes B_1^+\ar[r, "g_1"]\ar[d, "u^-\otimes t^+"']&C_1^+\otimes B_1^-\ar[d, "u^+\otimes t^-"]\\
		C_2^-\otimes B_2^+\ar[r, "g_2"']&C_2^+\otimes B_2^-\ar[ul, phantom, "\tiny\textit{commutes}"]
  \end{tikzcd}
\]
Recall from \eqref{eqn.int_composite} that to form the composites $f_1\then g_1$ and $f_2\then g_2$ in $\Int(\cat{U})$, one traces $B_1^-$ from the top and $B_2^-$ from bottom composites in the following commutative diagram
\[
	\begin{tikzcd}[column sep=40pt]
		C_1^-\otimes B_1^-\otimes A_1^+\ar[r, "C_1^-\otimes f_1"]\ar[d, "u^-\otimes t^-\otimes s^+"']&
		C_1^-\otimes B_1^+\otimes A_1^-\ar[r, "g_1\otimes A_1^-"]\ar[d, "u^-\otimes t^+\otimes s^-"']&
		C_1^+\otimes B_1^-\otimes A_1^-\ar[d, "u^+\otimes t^-\otimes s^-"]\\
		C_2^-\otimes B_2^-\otimes A_2^+\ar[r, "C_2^-\otimes f_2"']&
		C_2^-\otimes B_2^+\otimes A_2^-\ar[r, "g_2^-\otimes A_2^-"']&
		C_2^+\otimes B_2^-\otimes A_2^-		
  \end{tikzcd}
\]
The resulting diagram 
\[
	\begin{tikzcd}
		(A_1^-,A_1^+)\ar[r, tick, "f_1\then g_1"]\ar[d, "{(s^-,s^+)}"']&(C_1^-,C_1^+)\ar[d, "{(u^-, u^+)}"]\\
		(A_2^-,A_2^+)\ar[r, tick, "f_2\then g_2"']&(C_2^-,C_2^+)\ar[ul, phantom, "\Downarrow"]
	\end{tikzcd}
	\qqie
	\begin{tikzcd}
	C_1^-\otimes A_1^+\ar[r, "f_1\then g_1"]\ar[d, "u^-\otimes s^+"']&C_1^+\otimes A_1^-\ar[d, "u^+\otimes s^-"]\\
	C_2^-\otimes A_2^+\ar[r, "f_2\then g_2"']&C_2^+\otimes A_2^-\ar[ul, phantom, "\tiny\textit{commutes}"]
\end{tikzcd}
\]
commutes by a combination of naturality \eqref{eqn.tr_naturality} and uniformity \eqref{eqn.uniformity}. Checking that this double category is symmetric monoidal is straightforward, completing the proof.
\end{proof}

\begin{theorem}
The monoidal double category $\IInt(\cat{U})$ is compact in the sense of \cite{patterson2024toward}, for any uniform traced category $\cat{U}$.
\end{theorem}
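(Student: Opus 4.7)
The plan is to combine \cref{thm.IInt} with the classical compactness of $\Int(\cat{U})$, established in \cite{Joyal.Street.Verity:1996a}. The loose subcategory of $\IInt(\cat{U})$ is $\Int(\cat{U})$, so each object $(A^-, A^+)$ already has a dual $(A^+, A^-)$, with unit $\eta_A \colon (I, I) \tickar (A^-, A^+) \otimes (A^+, A^-)$ and counit $\epsilon_A \colon (A^+, A^-) \otimes (A^-, A^+) \tickar (I, I)$ both obtained, via the formula \eqref{eqn.int_construction}, from the braiding $\sigma_{A^-, A^+} \colon A^- \otimes A^+ \to A^+ \otimes A^-$ in $\cat{U}$. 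The triangle identities hold as equalities of loose arrows by \cite{Joyal.Street.Verity:1996a}.

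To promote this to a compact monoidal double category in Patterson's sense, I will exhibit each piece of the duality (units, counits, triangle identities, and compatibility with the tight and monoidal structures) as cells in $\IInt(\cat{U})$. The key simplifying fact is that $\IInt(\cat{U})$ is \emph{thin} by \cref{thm.IInt}: a cell with prescribed tight and loose boundary exists if and only if the associated square of loose arrows in $\cat{U}$ commutes. So I will not need to construct any cells by hand; it suffices to check that each required boundary square commutes in $\cat{U}$.

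These commutativity checks split into two families. First, the cells expressing the triangle identities with identity tight boundary exist because the underlying loose equations are literally the snake equations in $\Int(\cat{U})$, which are already known. Second, the cells expressing naturality of $\eta, \epsilon$ with respect to the tight and monoidal structures reduce to naturality of the braiding $\sigma$ in $\cat{U}$ together with the usual SMC coherence, hence also hold. Symmetry of the monoidal product and the strictness of the duality on objects $(A^-, A^+) \mapsto (A^+, A^-)$ make the compatibility with the symmetric monoidal structure automatic.

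The main obstacle I anticipate is not mathematical but bookkeeping: matching each of Patterson's axioms against a specific commuting diagram in $\cat{U}$, and verifying that all of these commute. Thinness of $\IInt(\cat{U})$ does the heavy lifting throughout, converting every double-categorical coherence condition into a one-dimensional check in $\cat{U}$ that follows from the SMC axioms together with the already-established compactness of $\Int(\cat{U})$.
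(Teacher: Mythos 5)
Your proposal is correct in substance and leans on the same two pillars as the paper's proof---thinness of $\IInt(\cat{U})$ (so every required cell reduces to a commuting square in $\cat{U}$) and the already-established compact closure of the loose 1-category $\Int(\cat{U})$ from Joyal--Street--Verity---but it packages the verification differently. You axiomatize compactness via cups, caps, snake equations, and naturality cells for $\eta$ and $\epsilon$; the paper instead follows Patterson's actual formulation, which asks for a duality functor $(-)^*\colon\IInt(\cat{U})\co\to\IInt(\cat{U})$ (covariant on tight maps, contravariant on loose ones) together with a currying bijection $\IInt(\cat{U})(A\otimes B,C)\cong\IInt(\cat{U})(A,B^*\otimes C)$ that is natural with respect to both tight maps (the two transposed cells exist simultaneously) and loose pre- and post-composition (the $\flat$/$\sharp$ exchange law). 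This difference is not merely cosmetic: the reason Patterson's definition exists at all is that the naive unit/counit axiomatization is delicate for double categories---in particular, how duality acts on \emph{tight} morphisms is genuinely extra structure, not determined by the loose-level $\eta$ and $\epsilon$, and it is exactly this tight-level action $(s^-,s^+)\mapsto(s^+,s^-)$ and its compatibility with the hom-isomorphism that the paper's proof checks. Your individual verifications (snake equations holding among loose arrows, naturality of the braiding supplying the boundary squares for the $\eta$/$\epsilon$ cells) are all true and would be needed in any case; but to prove the theorem \emph{as stated} you must either verify Patterson's axioms directly, as the paper does, or argue that your unit/counit package (including the covariant tight dual you use implicitly when writing $s\otimes s^*$) implies them. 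For a thin double category that bridge is short, but it is not of zero length, and your proposal currently treats the target definition as the obvious one rather than engaging with what Patterson's axioms actually require.
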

\begin{proof}
We need to provide a dual $(-)^*\colon\IInt(\cat{U})\co\to\IInt(\cat{U})$, where $-\co$ means loose-opposite, and then check several axioms. Define $(A^-,A^+)^*\coloneqq(A^+,A^-)$; it is clearly functorial and satisfies
\[
\IInt(\cat{U})(A\otimes B,C)\cong\IInt(\cat{U})(A, B^*\otimes C),
\]
since there is a bijection between maps $C^-\otimes A^+\otimes B^+\To{f^\sharp} C^+\otimes A^-\otimes B^-$ and maps $B^+\otimes C^-\otimes A^+\To{f^\flat} B^-\otimes C^+\otimes A^-$. 

It is also easy to check that for tight maps $s\colon A_1\to A_2$, $t\colon B_1\to B_2$ and $u\colon C_1\to C_2$, the left diagram below commutes iff the right one does:
\[
\begin{tikzcd}
  A_1\otimes B_1\ar[r, tick, "f_1^\sharp"]\ar[d, "s\otimes t"']&
  C_1\ar[d, "u"]\\
  A_2\otimes B_2\ar[r, tick, "f_2^\sharp"']&
  C_2\ar[ul, phantom, "\Downarrow"]
\end{tikzcd}
\qqiff
\begin{tikzcd}
  A_1\ar[r, tick, "f_1^\flat"]\ar[d, "s"']&
  B_1^*\otimes C_1\ar[d, "t^*\otimes u"]\\
  A_2\ar[r, tick, "f_2^\flat"']&
  B_2^*\otimes C_2\ar[ul, phantom, "\Downarrow"]
\end{tikzcd}
\]
Finally, for any maps $a\colon A'\tickar A$, $b\colon B'\tickar B$, and $c\colon C\tickar C'$, the axioms of $\Int(\cat{U})$ as a compact 1-category imply the remaining stated condition, that $a\then f^\flat\then(b^*\otimes c)\cong((a\otimes b)\then f^\sharp\then c)^\flat$. We leave the remaining coherences to the reader.
\end{proof}

The following is straightforward; we record it here for use in the next section.

\begin{lemma}\label[lemma]{lem.fact_system_IInt}
Let $\cat{U}$ be uniform traced monoidal, and let $\cat{I}^-$ (resp.\ $\cat{I}^+$) denote the class of those tight maps $(f^-,f^+)\colon (A_1^-,A_1^+)\to (A_2^-,A_2^+)$ in $\IInt(\cat{U})$ for which $A_1^-=A_2^-$ and $f^-$ is the identity (resp.\ $A_1^+=A_2^+$ and $f^+$ is the identity). 

Then $(\cat{I}^-,\cat{I}^+)$ forms a strict factorization system on the tight maps of $\IInt(\cat{U})$, and so does $(\cat{I}^+,\cat{I}^-)$.
\end{lemma}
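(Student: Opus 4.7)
The approach is to observe that the tight category of $\IInt(\cat{U})$ is literally the product category $\cat{U}\times\cat{U}$, so tight composition is componentwise; the trace, uniformity, and monoidal structure of $\cat{U}$ are actually irrelevant to this particular lemma and are used only to justify the double category in \cref{thm.IInt}. Under this identification, $\cat{I}^-$ consists of the tight maps of the form $(\id_{A^-},f^+)$ and $\cat{I}^+$ of those of the form $(f^-,\id_{A^+})$.

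I would then verify the axioms of a strict factorization system directly. Both $\cat{I}^-$ and $\cat{I}^+$ visibly contain the tight identities $(\id,\id)$ and are closed under composition, since componentwise composition preserves the property of having an identity in one fixed slot, e.g.\ $(\id_{A^-},g^+)\circ(\id_{A^-},f^+)=(\id_{A^-},g^+\circ f^+)$ lies in $\cat{I}^-$. For existence, any tight map $(f^-,f^+)\colon(A_1^-,A_1^+)\to(A_2^-,A_2^+)$ admits the canonical $(\cat{I}^-,\cat{I}^+)$-decomposition
\[
(A_1^-,A_1^+)\xrightarrow{(\id_{A_1^-},f^+)}(A_1^-,A_2^+)\xrightarrow{(f^-,\id_{A_2^+})}(A_2^-,A_2^+),
\]
and symmetrically a $(\cat{I}^+,\cat{I}^-)$-decomposition via the intermediate object $(A_2^-,A_1^+)$. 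Uniqueness on the nose follows because if $(m^-,\id)\circ(\id,e^+)=(f^-,f^+)$, then the components force $m^-=f^-$ and $e^+=f^+$, which in turn pins down the intermediate object as $(A_1^-,A_2^+)$; the analogous argument works on the other side.

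There is no real obstacle here; the statement reduces to the observation that every morphism in a product category factors uniquely as ``change the first coordinate, then the second'' (or vice versa). The only thing worth flagging is that closure under composition and uniqueness of factorization both hold \emph{strictly} rather than merely up to isomorphism, which is precisely what is required for a \emph{strict} factorization system as opposed to an orthogonal one.
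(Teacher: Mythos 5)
Your proof is correct and is exactly the ``straightforward'' argument the paper has in mind (the paper states the lemma without proof): the tight category is the product $\cat{U}\times\cat{U}$, and every morphism in a product category factors strictly uniquely as ``change one coordinate, then the other,'' with the intermediate object forced to be $(A_1^-,A_2^+)$ (resp.\ $(A_2^-,A_1^+)$). Your observation that the traced and uniformity hypotheses play no role in this particular lemma is also accurate.
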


\begin{remark}
Recall (\cite{grandis2000weak}) that any strict factorization system gives rise to an orthogonal factorization system by closing the left (resp.\ right) class under precomposition (resp.\ postcomposition) with isomorphisms.
\end{remark}

\section{Trajectories}\label[section]{sec.trajectories}

Suppose given a wiring diagram like \eqref{eqn.WD_set}, reproduced here, along with a trajectory through it as shown in green:
\begin{equation}\label[equation]{eqn.traj_wd}
\begin{tikzpicture}[WD, baseline=(BoxA)]
  \begin{pgfonlayer}{background}
    \wdboxUnits[minimum width=7cm]{Outer2}{}{0,0}{0,0}{5.5}
  \end{pgfonlayer}

  \begin{scope}[shift={(Outer2.center)},xshift=-1.5cm,
                yshift=-.25cm, transform shape,scale=\innerScale]
    \wdboxUnits[minimum width=4cm]{BoxA}{\Large A}{0}{0,0}{0}
  \end{scope}

  \begin{scope}[shift={(Outer2.center)},xshift=1.5cm,
                yshift=.5cm,transform shape,scale=\innerScale]
    \wdboxUnits[minimum width=4cm]{BoxB}{\Large B}{0,0}{0,0}{0}
  \end{scope}


  \tubeN{Outer2}{L}{2}{BoxA}{L}{1}{0}
 
	\tubeN{Outer2}{L}{1}{BoxB}{L}{1}{0}

  \tubeN{BoxA}{R}{2}{Outer2}{R}{2}{0}

  \tubeN{BoxA}{R}{1}{BoxB}{L}{2}{0}

	\tubeN{BoxB}{R}{1}{Outer2}{R}{1}{0}

  \loopTubeSmooth[below]{BoxB}{R}{2}{BoxA}{L}{1}{0}{BoxA}
	\begin{scope}[draw=green!50!black, very thick, 
		every to/.style={out=0,in=180}]
    \draw 
   		(Outer2-L-reg2-mid) to
   		(BoxA-L-reg1-mid) to 
  		(BoxA-R-reg1-mid) to 
  		(BoxB-L-reg2-mid) to
  		(BoxB-R-reg1-mid) to 
  		(Outer2-R-reg1-mid);
	\end{scope}
\end{tikzpicture}
\end{equation}
How can we find this within the mathematical formalism? We begin by defining a universal property we call \emph{segmentation}, which may exist on a double category. We prove \cref{thm.segmentation} which gives sufficient conditions on $\cat{U}$ by which $\IInt(\cat{U})$ is segmented, and we use it to show that $\smsetstar$ and $\polystar$ are each segmented. Finally we use segmentation to explain how to get the trajectory in \eqref{eqn.traj_wd}

\begin{definition}[Segmented double category]
Let $\mathbb{D}$ be a double category. We say $\mathbb{D}$ is \emph{segmented} if there exist two classes $(\cat{I}^-,\cat{I}^+)$ of tight maps satisfying the following universal property: For any 2-cell of the form shown left in \eqref{eqn.segment_1}, there is a factorization $\phi=\phi_1\then\phi_2$ having the form shown right:
\begin{equation}\label{eqn.segment_1}
\begin{tikzcd}
  A_1\ar[r, tick, "f", ""' name=tick1]\ar[d, "\text{in }\cat{I}^-"']&B_1\ar[d, "\text{in }\cat{I}^+"]\\
  A_2\ar[d]&B_2\ar[d]\\
  C\ar[r, tick, "g"', "" name=tick2]&D
  \ar[from=tick1, to=tick2, shorten =8mm, Rightarrow, "\phi"]
\end{tikzcd}
\hspace{1in}
\begin{tikzcd}[row sep=small]
  A_1\ar[r, tick, "f", ""' name=tick1]\ar[d, "\text{in }\cat{I}^-"']&B_1\ar[d, "\text{in }\cat{I}^+"]\\
  A_2\ar[d, "\text{in }\cat{I}^+"']&B_2\ar[d, "\text{in }\cat{I}^-"]\\
  A'\ar[r, tick, "f'", "" name=tick2, ""' name=tick3]\ar[d]&B'\ar[d]\\
  C\ar[r, tick, "g"', "" name=tick4]&D
  \ar[from=tick1, to=tick2, shorten =5mm, Rightarrow, "\phi_1"]
  \ar[from=tick3, to=tick4, shorten =1mm, Rightarrow, "\phi_2"]
\end{tikzcd}
\end{equation}
and it is initial in the sense that, for any other factorization $\phi=\phi'_1\then\phi'_2$ of the form shown left in \eqref{eqn.segmentation_up}, there exists $\psi\colon f'\imp g'$ of the form shown right, such that $\phi_1\then\psi=\phi_1'$ and $\psi\then\phi_2'=\phi_2$:
\begin{equation}\label[equation]{eqn.segmentation_up}
\begin{tikzcd}[row sep=18pt]
  A_1\ar[r, tick, "f", ""' name=tick1]\ar[d, "\text{in }\cat{I}^-"']&B_1\ar[d, "\text{in }\cat{I}^+"]\\
  A_2\ar[d, "\text{in }\cat{I}^+"']&B_2\ar[d, "\text{in }\cat{I}^-"]\\
  C'\ar[r, tick, "g'", "" name=tick2, ""' name=tick3]\ar[d]&D'\ar[d]\\
  C\ar[r, tick, "g"', "" name=tick4]&D
  \ar[from=tick1, to=tick2, shorten =6mm, Rightarrow, "\phi_1'"]
  \ar[from=tick3, to=tick4, shorten =1mm, Rightarrow, "\phi_2'"]
\end{tikzcd}
\hspace{1in}
\begin{tikzcd}[row sep=small]
  A_1\ar[r, tick, "f", ""' name=tick1]\ar[d, "\text{in }\cat{I}^-"']&B_1\ar[d, "\text{in }\cat{I}^+"]\\
  A_2\ar[d, "\text{in }\cat{I}^+"']&B_2\ar[d, "\text{in }\cat{I}^-"]\\
  A'\ar[r, tick, "f'", "" name=tick2, ""' name=tick3]\ar[d, "\text{in }\cat{I}^+"']&B'\ar[d, "\text{in }\cat{I}^-"]\\[5pt]
  C'\ar[r, tick, "g'", "" name=tick4, ""' name=tick5]\ar[d]&D'\ar[d]\\
  C\ar[r, tick, "g"', "" name=tick6]&D
  \ar[from=tick1, to=tick2, shorten =3mm, Rightarrow, "\phi_1"]
  \ar[from=tick3, to=tick4, shorten =1.5mm, Rightarrow, "\psi"]
  \ar[from=tick5, to=tick6, shorten =1mm, Rightarrow, "\phi'_2"]
\end{tikzcd}
\qedhere
\end{equation}
\end{definition}

\begin{theorem}\label[theorem]{thm.segmentation}
Suppose that $\cat{U}$ 
\begin{itemize}
	\item is cocartesian traced monoidal and uniform,
	\item has pushouts, 
	\item has an orthogonal factorization system $(\cat{R},\cat{S})$, such that every map in $\cat{R}$ has a section in $\cat{U}$ and such that the category $\cat{S}$ is extensive. 
\end{itemize}
Then the double category $\IInt(\cat{U})$ from \cref{thm.IInt} is segmented.
\end{theorem}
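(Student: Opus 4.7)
The plan is to construct the initial factorization by combining the orthogonal factorization system $(\cat{R},\cat{S})$ with pushouts in $\cat{U}$, and then to verify its universal property. Since $\cat{U}$ is cocartesian, the monoidal product $\otimes$ coincides with the coproduct $+$, which will be used freely throughout.

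Given the cell $\phi$, first apply the OFS to the ``lower-row'' tight maps: factorize $s^-=s_2^-\then u^-$ and $t^+=t_2^+\then v^+$ with $s_2^-,t_2^+\in\cat{R}$ and $u^-,v^+\in\cat{S}$. Set $A'\coloneqq(A'^-,A_2^+)$ and $B'\coloneqq(B_2^-,B'^+)$, where $A'^-,B'^+$ are the respective intermediate objects, together with $u^+\coloneqq s^+$ and $v^-\coloneqq t^-$. The tight map $A_2\to A'$ with components $(s_2^-,\id)$ lies in $\cat{I}^+$, and dually $B_2\to B'$ lies in $\cat{I}^-$, as required. For the intermediate loose map $f'$, form the pushout $Q$ in $\cat{U}$ of $f\colon B_1^-\otimes A_1^+\to B_1^+\otimes A_1^-$ against $t_1^-\otimes s_1^+$, with pushout legs $\iota\colon B_1^+\otimes A_1^-\to Q$ and $\alpha\colon B_2^-\otimes A_2^+\to Q$. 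The outer commutativity of $\phi$ yields a unique $\xi\colon Q\to D^+\otimes C^-$ with $\iota\then\xi=t^+\otimes s^-$ and $\alpha\then\xi=(t^-\otimes s^+)\then g$. Apply the OFS to $\xi$ to obtain $\xi=\rho\then\sigma$ with $\rho\colon Q\to Q'$ in $\cat{R}$ and $\sigma$ in $\cat{S}$, and set $f'\coloneqq\alpha\then\rho$.

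The crux of the construction is identifying $Q'$ canonically with $B'^+\otimes A'^-$. Extensivity of $\cat{S}$ together with standard closure properties of the OFS imply that both $\cat{R}$ and $\cat{S}$ are closed under the coproduct, so $(t_2^++s_2^-)\then(v^++u^-)$ is already the OFS factorization of $t^++s^-=\iota\then\xi$. Uniqueness of OFS factorizations applied to the two factorizations of $\iota\then\xi$, combined with the unique-diagonal lifting for an appropriate $(\cat{R},\cat{S})$ square, produces the desired canonical isomorphism $Q'\cong B'^+\otimes A'^-$ identifying $\sigma$ with $v^++u^-$ and $\iota\then\rho$ with $t_2^++s_2^-$. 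Commutativity of $\phi_1$ then follows from the pushout square, $(t_1^-\otimes s_1^+)\then f'=f\then\iota\then\rho=f\then(t_2^+\otimes s_2^-)$, and commutativity of $\phi_2$ from the OFS factorization, $f'\then(v^+\otimes u^-)=\alpha\then\xi=(t^-\otimes s^+)\then g$.

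For initiality, given any other factorization $\phi=\phi_1'\then\phi_2'$ through an intermediate loose map $g'$, the universal property of the pushout $Q$ supplies a unique compatible map from $Q$ into the codomain of $g'$, and orthogonality of $(\cat{R},\cat{S})$ yields the comparison cell $\psi\colon f'\imp g'$ satisfying $\phi_1\then\psi=\phi_1'$ and $\psi\then\phi_2'=\phi_2$; the hypothesis that every $\cat{R}$-map has a section in $\cat{U}$ is what allows one to assemble the tight components of $\psi$ from the OFS data into the other factorization's intermediate object. The main obstacle is the identification $Q'\cong B'^+\otimes A'^-$, which hinges on the OFS respecting the coproduct decomposition of $D^+\otimes C^-=D^++C^-$---exactly where extensivity of $\cat{S}$ enters essentially---while uniformity of $\cat{U}$ is used implicitly throughout via \cref{thm.IInt} to ensure the cellular structure of $\IInt(\cat{U})$ is compatible with all the diagram chases.
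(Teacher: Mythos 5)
Your overall architecture is half right: forming the pushout $Q$ of $f$ against the tight data and OFS-factoring the induced map $\xi\colon Q\to D^+\otimes C^-$ as $\rho\then\sigma$ is exactly what the paper does (its $P$ and $X$ are your $Q$ and $Q'$), and your initiality argument via the pushout's universal property plus a section of the $\cat{R}$-map is also the right idea. But your construction of the intermediate object is wrong, and the ``crux'' identification $Q'\cong B'^+\otimes A'^-$ is false. The objects $A'^-$ and $B'^+$ cannot be obtained by OFS-factoring the tight components $s^-\colon A_1^-\to C^-$ and $t^+\colon B_1^+\to D^+$ on their own, because those components carry no information about the loose map $f$; and the uniqueness argument you invoke does not apply, since $\iota\then\rho\then\sigma$ is a factorization of $t^+\otimes s^-$ whose first part $\iota\then\rho$ need not lie in $\cat{R}$ ($\iota$ is merely a pushout leg), so there is no second $(\cat{R},\cat{S})$-factorization of the same map to compare against. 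Concretely, take $\cat{U}=\smsetstar$ and the first step of the paper's trajectory example: $A_1=B_1=A_2=(0,0)$, $B_2=(1,0)$, $C=(X^-,X^+)$, $D=(Y^-,Y^+)$, with $f$ the empty map and the tight map $B_2\to D$ given by a point $1\to Y^-$ whose image under $g$ lies in $X^-$. Then $s^-\colon 0\to X^-$ and $t^+\colon 0\to Y^+$ force $A'^-=B'^+=0$, so $B'^+\otimes A'^-=0$; but $Q\cong 1$, $\xi\colon 1\to Y^++X^-$ is total, and $Q'\cong 1$. Your prescription would make the intermediate loose map the everywhere-undefined map $1\parto 0$, losing the trajectory entirely.

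The repair is the paper's move: the coproduct decomposition of $Q'$ must be extracted from $\sigma\colon Q'\to D^++C^-$ itself, by defining $A_2^-$ and $B_2^+$ as the pullbacks of $\sigma$ along the coproduct inclusions $C^-\inj D^++C^-$ and $D^+\inj D^++C^-$; extensivity of $\cat{S}$ is precisely what guarantees $Q'\cong B_2^++A_2^-$. With that replacement, your $f'\coloneqq\alpha\then\rho$ is the correct intermediate loose map, and your initiality argument (the universal map out of the pushout, precomposed with a section of $\rho\in\cat{R}$, then split over $D^++C^-$ into its two summands) goes through as in the paper.
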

\begin{proof}
We take $(\cat{I}^-,\cat{I}^+)$ to be the classes of maps from \cref{lem.fact_system_IInt}.
Suppose given a diagram in $\IInt(\cat{U})$, as in the left of \eqref{eqn.segment_1}:
\begin{equation}\label[equation]{eqn.weird_diag}
\begin{tikzcd}
  (A_1^-,A_1^+)\ar[r, tick, "f", ""' name=tick1]\ar[d, "\text{in }\cat{I}^-"']&(B_1^-,B_1^+)\ar[d, "\text{in }\cat{I}^+"]\\
  (A_1^-,A_2^+)\ar[d]&(B_2^-,B_1^+)\ar[d]\\
  (C^-,C^+)\ar[r, tick, "g"', "" name=tick2]&(D^-,D^+)
  \ar[from=tick1, to=tick2, shorten =8mm, Rightarrow]
\end{tikzcd}
\qqie
\begin{tikzcd}[column sep=32pt]
  B_1^-+A_1^+\ar[d]\ar[r, "f"]&B_1^++A_1^-\ar[dd]\\
  B_2^-+A_2^+\ar[d]\\
  D^-+C^+\ar[r, "g"']&D^++C^-\ar[uul, phantom, "\tiny\textit{commutes}"]
\end{tikzcd}
\end{equation}
The diagram on the right in \eqref{eqn.weird_diag} takes place in $\cat{U}$. We can take the pushout $P$ of the top span and then factor the induced map through some $X$, as shown:
\begin{equation}\label[equation]{eqn.factorization}
\begin{tikzcd}[row sep=small]
  B_1^-+A_1^+\ar[d]\ar[r, "f"]&B_1^++A_1^-\ar[d]\\[6pt]
  B_2^-+A_2^+\ar[dd]\ar[r]\ar[rrd, dashed, gray, bend right=10pt, "{\color{gray}f'}"']&P\ar[dd]\ar[lu, phantom, very near start, "\ulcorner"]\ar[dr, bend left=20pt, "\text{in }\cat{R}"]\\&&[-18pt]
  X\ar[dl, bend left=20pt, "\text{in }\cat{S}"]\\
  D^-+C^+\ar[r, "g"']&D^++C^-
\end{tikzcd}
\end{equation}
Since $\cat{S}$ is extensive, we can define $A_2^-$ and $B_2^+$ to be the pullbacks:
\[
\begin{tikzcd}
B_2^+\ar[d]\ar[r]&X\ar[d]&A_2^-\ar[l]\ar[d]\\
D^+\ar[r]&D^++C^-\ar[ul, phantom, very near end, "\lrcorner"]\ar[ur, phantom, very near end, "\llcorner"]&C^-\ar[l]
\end{tikzcd}
\]
and we obtain the proposed factorization as in \eqref{eqn.segment_1}:
\[
\begin{tikzcd}
  B_1^-+A_1^+\ar[d]\ar[r, "f"]&B_1^++A_1^-\ar[d]\\
  B_2^-+A_2^+\ar[d]\ar[r, "f'"]&B_2^++A_2^-\ar[d]\\
  D^-+C^+\ar[r, "g"']&D^++C^-
\end{tikzcd}
\qqie
\begin{tikzcd}
  (A_1^-,A_1^+)\ar[r, tick, "f", ""' name=tick1]\ar[d, "\text{in }\cat{I}^-"']&(B_1^-,B_1^+)\ar[d, "\text{in }\cat{I}^+"]\\
  (A_1^-,A_2^+)\ar[d, "\text{in }\cat{I}^+"']&(B_2^-,B_1^+)\ar[d, "\text{in }\cat{I}^-"]\\
  (A_2^-,A_2^+)\ar[d]\ar[r, tick, "f'" name=tick2, ""' name=tick3]&(B_2^-,B_2^+)\ar[d]\\
  (C^-,C^+)\ar[r, tick, "g"', "" name=tick4]&(D^-,D^+)
  \ar[from=tick1, to=tick2, shorten =7mm, Rightarrow]
  \ar[from=tick3, to=tick4-|tick3, shorten =2mm, Rightarrow]
\end{tikzcd}
\]
To prove the universality, suppose we are given some other such factorization,
\[
\begin{tikzcd}
  (A_1^-,A_1^+)\ar[r, tick, "f", ""' name=tick1]\ar[d, "\text{in }\cat{I}^-"']&(B_1^-,B_1^+)\ar[d, "\text{in }\cat{I}^+"]\\
  (A_1^-,A_2^+)\ar[d, "\text{in }\cat{I}^+"']&(B_2^-,B_1^+)\ar[d, "\text{in }\cat{I}^-"]\\
  ({C'}^-,A_2^+)\ar[d]\ar[r, tick, "g'" name=tick2, ""' name=tick3]&(B_2^-,{D'}^-)\ar[d]\\
  (C^-,C^+)\ar[r, tick, "g"', "" name=tick4]&(D^-,D^+)
  \ar[from=tick1, to=tick2, shorten =7mm, Rightarrow]
  \ar[from=tick3, to=tick4-|tick3, shorten =2mm, Rightarrow]
\end{tikzcd}
\qqie
\begin{tikzcd}
  B_1^-+A_1^+\ar[d]\ar[r, "f"]&B_1^++A_1^-\ar[d]\\
  B_2^-+A_2^+\ar[d]\ar[r, "g'"]&{D'}^++{C'}^-\ar[d]\\
  D^-+C^+\ar[r, "g"']&D^++C^-
\end{tikzcd}
\]
There is a universal map $P\to {D'}^++{C'}^-$, and since the map $P\to X$ from \eqref{eqn.factorization} is in $\cat{R}$ and hence has a section $X\to P$, we have the composite $B_2^++A_2^-=X\to P\to {D'}^++{C'}^-$. Moreover, this map takes place over $D^++C^-$ and hence can be written as the sum of maps
\[
  B_2^+\to {D'}^+
  \qqand
  A_2^-\to {C'}^-.
\]
Thus we have the desired factorization:
\[
\begin{tikzcd}
  B_1^-+A_1^+\ar[d]\ar[r, "f"]&B_1^++A_1^-\ar[d]\\
  B_2^-+A_2^+\ar[d, equal]\ar[r]&B_2^++A_2^-\ar[d]\\
  B_2^-+A_2^+\ar[d]\ar[r]&{D'}^++{C'}^-\ar[d]\\
  D^-+C^+\ar[r, "g"']&D^++C^-
\end{tikzcd}
\qqie
\begin{tikzcd}[row sep=small]
  (A_1^-,A_1^+)\ar[r, tick, "f", ""' name=tick1]\ar[d, "\text{in }\cat{I}^-"']&(B_1^-,B_1^+)\ar[d, "\text{in }\cat{I}^+"]\\
  (A_1^-,A_2^+)\ar[d, "\text{in }\cat{I}^+"']&(B_2^-,B_1^+)\ar[d, "\text{in }\cat{I}^-"]\\
  (A_2^-,A_2^+)\ar[r, tick, "f'", "" name=tick2, ""' name=tick3]\ar[d, "\text{in }\cat{I}^+"']&(B_2^-,B_2^+)\ar[d, "\text{in }\cat{I}^-"]\\[5pt]
  ({C'}^-,A_2^+)\ar[r, tick, "g'", "" name=tick4, ""' name=tick5]\ar[d]&(B_2^-,{D'}^+)\ar[d]\\
  (C^-,C^+)\ar[r, tick, "g"', "" name=tick6]&(D^-,D^+)
  \ar[from=tick1, to=tick2, shorten =6mm, Rightarrow]
  \ar[from=tick3, to=tick4, shorten =2mm, Rightarrow]
  \ar[from=tick5, to=tick6, shorten =1mm, Rightarrow]
\end{tikzcd}
\qedhere
\]
\end{proof}

\begin{corollary}\label[corollary]{cor.star_segmentation}
The double categories $\IInt(\smsetstar)$ and $\IInt(\polystar)$ are each segmented.
\end{corollary}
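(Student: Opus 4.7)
The plan is to apply \cref{thm.segmentation} directly to each of $\cat{U} = \smsetstar$ and $\cat{U} = \polystar$; the entire task then reduces to verifying the three hypotheses of that theorem in each case, and the corollary follows by invoking the theorem twice.

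First, the uniform cocartesian traced structure is already in hand: \cref{prop.setstar_traced} supplies it for $\smsetstar$, and \cref{cor.polystar_traced} supplies it for $\polystar$. So the first bullet of \cref{thm.segmentation} is immediate.

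Second, I would argue that both categories have pushouts. By \cref{lemma.lex_star,prop.complemented_points}, $\smsetstar$ is isomorphic to the coslice $1/\smset$ (i.e.\ pointed sets) and $\polystar$ is isomorphic to $1/\poly$; both base categories $\smset$ and $\poly$ are cocomplete, and coslices of cocomplete categories are cocomplete, so in particular $\smsetstar$ and $\polystar$ each have pushouts. Third, for the factorization system, \cref{lem.ofs_cstar} already hands us exactly what is required: in $\cat{C}_\star$ it produces an orthogonal factorization system $(\cat{R},\cat{C})$ where every map in $\cat{R}$ admits a section in $\cat{C}$. Taking $\cat{S} \coloneqq \cat{C}$ — that is, $\smset$ in the first case and $\poly$ in the second — the remaining requirement that $\cat{S}$ be extensive is supplied by \cref{prop.complemented_points}, which shows that $\smset[\cat{L}]$ is lextensive for every $\cat{L}$ and hence applies both to $\smset \cong \smset[0]$ and to $\poly \cong \smset[\{\yon\}]$.

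There is essentially no obstacle; the proof is a bookkeeping exercise. The only mildly non-trivial point to keep in mind is the identification of the paper's $\polystar = 1/\poly$ with $\poly_\star$ in the sense of \cref{lemma.lex_star}, which requires $\poly$ to have all points isolated — but this is exactly \cref{prop.complemented_points}. Once this identification is made, \cref{lem.ofs_cstar} applies and all three hypotheses of \cref{thm.segmentation} are verified in both cases, completing the proof.
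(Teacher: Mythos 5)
Your proposal is correct and matches the paper's own proof essentially step for step: both verify the three hypotheses of \cref{thm.segmentation} via \cref{prop.setstar_traced,cor.polystar_traced} for the uniform traced structure, a coslice-category argument for pushouts, and \cref{prop.complemented_points,lem.ofs_cstar} for the factorization system with extensive right class. The only cosmetic difference is that you invoke cocompleteness of coslices where the paper cites reflection of connected colimits by the forgetful functor; these are the same argument.
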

\begin{proof}
It suffices to show that $\smsetstar$ and $\polystar$ satisfy the properties in \cref{thm.segmentation}. Both $\smsetstar$ and $\polystar$ are cocartesian traced monoidal and uniform by \cref{prop.setstar_traced,cor.polystar_traced}. Both have pushouts because the forgetful functor from a coslice category reflects all connected colimits, and $\smset$ and $\poly$ have pushouts. The third criterion is \cref{prop.complemented_points,lem.ofs_cstar}.
\end{proof}

We now show how to use segmentation from \cref{cor.star_segmentation} to obtain trajectories. The picture in \eqref{eqn.traj_wd} is the result of tracing a trajectory that begins with only the following: 
\begin{itemize}
	\item a wiring diagram $X\xtickar{g} Y$, where $X\coloneqq A+B$
	\item a wiring diagram $0\xtickar{g_A} A$ and a wiring diagram $0\xtickar{g_B} B$, and
	\item an element $1\to Y^-$ that begins the trajectory.
\end{itemize}
We can draw this as a picture and the corresponding data in $\IInt(\smsetstar)$:
\[
\begin{tikzpicture}[WD, baseline=(BoxA)]
  \begin{pgfonlayer}{background}
    \wdboxUnits[minimum width=4cm]{Outer2}{}{0,0}{0,0}{5.5}
  \end{pgfonlayer}

  \begin{scope}[shift={(Outer2.center)},xshift=-.75cm,
                yshift=-.25cm, transform shape,scale=\innerScale]
    \wdboxUnits[minimum width=2cm]{BoxA}{\Large A}{0}{0,0}{0}
  \end{scope}

  \begin{scope}[shift={(Outer2.center)},xshift=.75cm,
                yshift=.5cm,transform shape,scale=\innerScale]
    \wdboxUnits[minimum width=2cm]{BoxB}{\Large B}{0,0}{0,0}{0}
  \end{scope}


  \tubeN{Outer2}{L}{2}{BoxA}{L}{1}{0}
 
	\tubeN{Outer2}{L}{1}{BoxB}{L}{1}{0}

  \tubeN{BoxA}{R}{2}{Outer2}{R}{2}{0}

  \tubeN{BoxA}{R}{1}{BoxB}{L}{2}{0}

	\tubeN{BoxB}{R}{1}{Outer2}{R}{1}{0}

  \loopTubeSmooth[below]{BoxB}{R}{2}{BoxA}{L}{1}{0}{BoxA}

  \tubeN{BoxA}{L}{1}{BoxA}{R}{1}{0} 
  \tubeN{BoxB}{L}{1}{BoxB}{R}{2}{0}
  \tubeN{BoxB}{L}{2}{BoxB}{R}{1}{0}
	\node[circle, draw=green!50!black, inner sep=1.5pt, fill=green!50!black] at (Outer2-L-reg2-mid) {};
\end{tikzpicture}
\hspace{.6in}
\begin{tikzcd}
  (0,0)\ar[r, ttick, ""' name=tick11]\ar[d, equal]&[5pt]
  	(0,0)\ar[r, ttick, ""' name=tick21]\ar[d, equal, "\text{in }\cat{I}^-"']&
			(0,0)\ar[d, "\text{in }\cat{I}^+"]\\
  (0,0)\ar[r, ttick, "" name=tick12, ""' name=tick13]\ar[d, equal]&
  	(0,0)\ar[d]&
			(1,0)\ar[d, "{\color{green!50!black}\bullet}"']\\
  (0,0)\ar[r, tick, "" name=tick14, "g_A+g_B"']]&
  	(X^-,X^+)\ar[r, tick, "" name=tick22, "g"']]&
			(Y^-,Y^+)
  \ar[from=tick11, to=tick12, Rightarrow, shorten=2mm]
  \ar[from=tick13, to=tick14-|tick13, Rightarrow, shorten=2mm]
  \ar[from=tick21, to=tick22, Rightarrow, shorten=7mm]
\end{tikzcd}
\]
Now in the right-hand square, corresponding to the wiring diagram $X\tickar Y$, we see the pattern by which segmentation allows us to factor, so we obtain:
\[
\begin{tikzpicture}[WD, baseline=(BoxA)]
  \begin{pgfonlayer}{background}
    \wdboxUnits[minimum width=4cm]{Outer2}{}{0,0}{0,0}{5.5}
  \end{pgfonlayer}

  \begin{scope}[shift={(Outer2.center)},xshift=-.75cm,
                yshift=-.25cm, transform shape,scale=\innerScale]
    \wdboxUnits[minimum width=2cm]{BoxA}{\Large A}{0}{0,0}{0}
  \end{scope}

  \begin{scope}[shift={(Outer2.center)},xshift=.75cm,
                yshift=.5cm,transform shape,scale=\innerScale]
    \wdboxUnits[minimum width=2cm]{BoxB}{\Large B}{0,0}{0,0}{0}
  \end{scope}


  \tubeN{Outer2}{L}{2}{BoxA}{L}{1}{0}
 
	\tubeN{Outer2}{L}{1}{BoxB}{L}{1}{0}

  \tubeN{BoxA}{R}{2}{Outer2}{R}{2}{0}

  \tubeN{BoxA}{R}{1}{BoxB}{L}{2}{0}

	\tubeN{BoxB}{R}{1}{Outer2}{R}{1}{0}

  \loopTubeSmooth[below]{BoxB}{R}{2}{BoxA}{L}{1}{0}{BoxA}

  \tubeN{BoxA}{L}{1}{BoxA}{R}{1}{0} 
  \tubeN{BoxB}{L}{1}{BoxB}{R}{2}{0}
  \tubeN{BoxB}{L}{2}{BoxB}{R}{1}{0}
	\begin{scope}[draw=green!50!black, very thick, 
		every to/.style={out=0,in=180}]
    \draw 
   		(Outer2-L-reg2-mid)
   		to (BoxA-L-reg1-mid)
		;
	\end{scope}

	\node[circle, draw=green!50!black, inner sep=1.5pt, fill=green!50!black] at (BoxA-L-reg1-mid) {};
\end{tikzpicture}
\hspace{.6in}
\begin{tikzcd}
  (0,0)\ar[r, ttick, ""' name=tick11]\ar[d, equal]&[5pt]
  	(0,0)\ar[r, ttick, ""' name=tick21]\ar[d, equal, "\text{in }\cat{I}^-"']&
			(0,0)\ar[d, "\text{in }\cat{I}^+"]\\
  (0,0)\ar[r, ttick, "" name=tick12, ""' name=tick13]\ar[d, equal, "\text{in }\cat{I}^-"']&
  	(0,0)\ar[d, "\text{in }\cat{I}^+"']&
			(1,0)\ar[d, "\text{in }\cat{I}^-"]\\
  (0,0)\ar[d, equal]&
  	(1,0)\ar[r, tick, "" name=tick22, ""' name=tick23]\ar[d, "{\color{green!50!black}\bullet}"']&
  		(1,0)\ar[d]\\
  (0,0)\ar[r, tick, "" name=tick14, "g_A+g_B"']&
  	(X^-,X^+)\ar[r, tick, "" name=tick24, "g"']&
			(Y^-,Y^+)
  \ar[from=tick11, to=tick12, Rightarrow, shorten=2mm]
  \ar[from=tick13, to=tick14-|tick13, Rightarrow, shorten=7mm]
  \ar[from=tick21, to=tick22, Rightarrow, shorten=7mm]
  \ar[from=tick23, to=tick24-|tick23, Rightarrow, shorten=2mm]
\end{tikzcd}
\]
Now in the left-hand square, corresponding to the wiring diagram $0\tickar X$, we see the pattern by which segmentation allows us to factor, so we obtain:
\[
\begin{tikzpicture}[WD, baseline=(BoxA)]
  \begin{pgfonlayer}{background}
    \wdboxUnits[minimum width=4cm]{Outer2}{}{0,0}{0,0}{5.5}
  \end{pgfonlayer}

  \begin{scope}[shift={(Outer2.center)},xshift=-.75cm,
                yshift=-.25cm, transform shape,scale=\innerScale]
    \wdboxUnits[minimum width=2cm]{BoxA}{\Large A}{0}{0,0}{0}
  \end{scope}

  \begin{scope}[shift={(Outer2.center)},xshift=.75cm,
                yshift=.5cm,transform shape,scale=\innerScale]
    \wdboxUnits[minimum width=2cm]{BoxB}{\Large B}{0,0}{0,0}{0}
  \end{scope}


  \tubeN{Outer2}{L}{2}{BoxA}{L}{1}{0}
 
	\tubeN{Outer2}{L}{1}{BoxB}{L}{1}{0}

  \tubeN{BoxA}{R}{2}{Outer2}{R}{2}{0}

  \tubeN{BoxA}{R}{1}{BoxB}{L}{2}{0}

	\tubeN{BoxB}{R}{1}{Outer2}{R}{1}{0}

  \loopTubeSmooth[below]{BoxB}{R}{2}{BoxA}{L}{1}{0}{BoxA}

  \tubeN{BoxA}{L}{1}{BoxA}{R}{1}{0} 
  \tubeN{BoxB}{L}{1}{BoxB}{R}{2}{0}
  \tubeN{BoxB}{L}{2}{BoxB}{R}{1}{0}
	\begin{scope}[draw=green!50!black, very thick, 
		every to/.style={out=0,in=180}]
    \draw 
   		(Outer2-L-reg2-mid)
   		to (BoxA-L-reg1-mid)
  		to (BoxA-R-reg1-mid) 
		;
	\end{scope}

	\node[circle, draw=green!50!black, inner sep=1.5pt, fill=green!50!black] at (BoxA-R-reg1-mid) {};
\end{tikzpicture}
\hspace{.6in}
\begin{tikzcd}
  (0,0)\ar[r, ttick, ""' name=tick11]\ar[d, equal]&[5pt]
  	(0,0)\ar[r, ttick, ""' name=tick21]\ar[d, equal, "\text{in }\cat{I}^-"']&
			(0,0)\ar[d, "\text{in }\cat{I}^+"]\\
  (0,0)\ar[r, ttick, "" name=tick12, ""' name=tick13]\ar[d, equal, "\text{in }\cat{I}^-"']&
  	(0,0)\ar[d, "\text{in }\cat{I}^+"']&
			(1,0)\ar[d, "\text{in }\cat{I}^-"]\\
  (0,0)\ar[d, equal]&
  	(1,0)\ar[r, tick, "" name=tick22, ""' name=tick23]\ar[d, "\text{in }\cat{I}^-"']&
  		(1,0)\ar[d, "\text{in }\cat{I}^+"]\\
  (0,0)\ar[r, tick, "" name=tick14, ""' name=tick15]\ar[d, equal]&
  	(1,1)\ar[d, "{\color{green!50!black}\bullet}"]&
  		(1,0)\ar[d]\\
  (0,0)\ar[r, tick, "" name=tick16, "g_A+g_B"']&
  	(X^-,X^+)\ar[r, tick, "" name=tick24, "g"']&
			(Y^-,Y^+)
  \ar[from=tick11, to=tick12, Rightarrow, shorten=2mm]
  \ar[from=tick13, to=tick14-|tick13, Rightarrow, shorten=7mm]
  \ar[from=tick15, to=tick16-|tick15, Rightarrow, shorten=2mm]
  \ar[from=tick21, to=tick22, Rightarrow, shorten=7mm]
  \ar[from=tick23, to=tick24-|tick23, Rightarrow, shorten=7mm]
\end{tikzcd}
\]
And so on---in this case three more times, to get to $B$, through $B$, and finally out---until the trajectory completes in $Y^+$.

\printbibliography

\end{document}